\newcommand{\dbar}{{\overline{\partial}}}
\def\Xint#1{\mathchoice
  {\XXint\displaystyle\textstyle{#1}}%
  {\XXint\textstyle\scriptstyle{#1}}%
  {\XXint\scriptstyle\scriptscriptstyle{#1}}%
  {\XXint\scriptscriptstyle\scriptscriptstyle{#1}}%
  \!\int}
\def\XXint#1#2#3{{%
    \setbox0=\hbox{$#1{#2#3}{\int}$}
    \vcenter{\hbox{$#2#3$}}\kern-.5\wd0}}
\def\dashint{\Xint-}
\newtheorem{lemma}{Lemma}
\newtheorem{theorem}{Theorem}
\newtheorem{rhp}{Riemann-Hilbert Problem}
\newtheorem{rhdbp}{Riemann-Hilbert-$\dbar$ Problem}
\newtheorem{dbp}{$\dbar$ Problem}
\newtheorem{condition}{Condition}
\newtheorem{ar}{Asymptotic Result}
\newenvironment{remark}{$\triangleleft$\,\,{\bf Remark:}}{$\triangleright$}
\title[The $\dbar$ steepest descent method] {The $\dbar$ steepest
  descent method for orthogonal polynomials on the real line with varying weights}
\author{K. T.-R. McLaughlin}
\address{K. T.-R. McLaughlin:  Department of Mathematics\\University of Arizona\\ Tucson, AZ 85721\\
  Email address: \mcl } \author{P. D. Miller} \address{P. D. Miller:
  Department of Mathematics\\ University of Michigan\\ East Hall\\ 530
  Church St.\\ Ann Arbor, MI 48109-1109\\ Email address:
  \millerpd} \date{\today}
\begin{document}

\begin{abstract}
  We obtain Plancherel-Rotach type asymptotics valid in all regions of
  the complex plane for orthogonal polynomials with varying weights of
  the form $e^{-NV(x)}$ on the real line, assuming that $V$ has only
  two Lipschitz continuous derivatives and that the corresponding
  equilibrium measure has typical support properties.  As an
  application we extend the universality class for bulk and edge
  asymptotics of eigenvalue statistics in unitary invariant Hermitian
  random matrix theory.  Our methodology involves developing a new
  technique of asymptotic analysis for matrix Riemann-Hilbert problems
  with nonanalytic jump matrices suitable for analyzing such problems
  even near transition points where the solution changes from
  oscillatory to exponential behavior.
\end{abstract}

\maketitle

\section{Introduction}
\label{sec:Intro}
Let $V(x)$ be a real-valued function (an \emph{external field} or
\emph{potential}) growing faster as $|x|\to\infty$ than $[\log(1 +
x^{2})]^{1+\epsilon}$ for some $\epsilon>0$.  In particular, this
implies that all moments of the measure on $\mathbb{R}$ given by
\begin{equation}
\label{eq:OPMeas}
d\nu_N(x):=e^{-NV(x)}\,dx
\end{equation}
are finite.  A measure of this form is said to be a \emph{varying
  weight} due to the presence of the parameter $N$.  This paper
concerns the asymptotic behavior of polynomials orthogonal with
respect to varying weights of the form \eqref{eq:OPMeas}.  They are
defined as follows.  For $n = 0, 1, 2,\dots$, let $p_{n} = p_{n}(z) =
p_{n}(z;N) = \kappa_{n,n} z^{n} + \cdots + \kappa_{n,0}$,
$\kappa_{n,n} > 0$ be the (unique) polynomial of degree $n$ satisfying
\begin{equation}
\label{eq:OPDEF1}
\int_{\mathbb{R}} p_{n}(x) x^{k} d\nu_N(x) = 0\,,\quad\quad
\text{for $0 \le k < n$,} 
\end{equation}
and
\begin{equation}
\label{eq:OPDEF2}
\int_{\mathbb{R}} p_{n}(x)^{2} d\nu_N(x) = 1.
\end{equation}
The interest is in the behavior of the polynomials \emph{of degree $N$
  and $N-1$}, where the integer $N$ is the same which appears in the
measure of orthogonality, in the limit $N \to \infty$. We obtain a
precise description of the polynomials $p_{N}(z;N)$ and $p_{N-1}(z;N)$
which is uniformly valid for all $z \in \mathbb{C}$, for all $N$
sufficiently large.  This type of asymptotic description is often
referred to as \emph{Plancherel-Rotach asymptotics}, after the
analysis of the Hermite polynomials in \cite{PlaRot}.

In the late 1990s new Riemann-Hilbert techniques originally developed
for the asymptotic analysis of problems in integrable nonlinear
partial differential equations were applied to the asymptotic analysis
of Riemann-Hilbert problems encoding systems of orthogonal polynomials
with respect to varying weights on $\mathbb{R}$, first for the case of
external fields of the form $V(x)=x^{4} - \gamma x^{2}$ \cite{BI}, and
then for the case of general real analytic $V$ in \cite{op1,op2}.  (See
\cite{op1} for more information about Plancherel-Rotach type
asymptotics for orthogonal polynomials prior to the use of
Riemann-Hilbert techniques.)  The Riemann-Hilbert method has been
extended, and applied to various types of asymptotic questions in
approximation theory (see, for example, \cite{KrMcL}, \cite{DbarOP1},
\cite{DOP}, and \cite{KMVV}).  With the exception of \cite{KrMcL} and
\cite{DbarOP1}, all of these applications and extensions deal with
weights that are \emph{real analytic}.

The main purpose of this manuscript is to establish Plancherel-Rotach
type asymptotics for orthogonal polynomials, when the external field
$V$ possesses \emph{only two Lipschitz continuous derivatives}, i.e.\@
in the absence of analyticity.  (The precise assumptions on the
external field $V$ are most naturally described in terms of the
\emph{equilibrium measure} to be defined in subsection
\ref{subsec:EQMEAS} below.)  To obtain such a uniform asymptotic
description we present a new hybrid Riemann-Hilbert-$\dbar$ method of
asymptotic analysis, that is a significant extension of the
$\dbar$-method introduced in \cite{DbarOP1} to analyze orthogonal
polynomials on the unit circle.  By contrast with that method, a
fundamental new feature of orthogonal polynomials with varying weights
\emph{on the real line} is the presence of ``transition points'' (also
known as endpoints of the equilibrium measure) in the neighborhood of
which the asymptotic behavior exhibits a complicated transition from
oscillatory to exponential behavior.

\subsection{Application to random matrices}
\label{sec:RMTOP}
Among many applications of the asymptotic theory of orthogonal
polynomials is the calculation of certain statistics of eigenvalues in
random matrix theory. Unitary invariant ensembles of random matrices
are described by probability measures of the form
\begin{equation}
\label{eq:001}
d\mathbb{P}_N(\mathbf{M}) = \frac{1}{Z_{N}}e^{-N \mathrm{Tr}(V(\mathbf{M}))}\,d\mathbf{M},
\end{equation}
defined on $N \times N$ Hermitian matrices $\mathbf{M}$, where $V(x)$
is an external field of the type described earlier.  Here
$d\mathbf{M}$ denotes Lebesgue measure on the algebraically
independent entries:
\begin{equation}
d\mathbf{M} = \prod_{j=1}^{N} dM_{jj} \prod_{1 \le j < k \le
  N}d\mathrm{Re}(M_{jk})\, d \mathrm{Im}(M_{jk}),
\end{equation}
and $Z_N$ is a normalization constant (partition function).  One of
the origins of the theory of random matrices in physics was the study
of nuclear resonance levels in the 1950s.  See \cite{Mehta} and the
references contained therein for more information.  

\subsubsection{Connection with orthogonal polynomials}
A remarkable connection to orthogonal polynomials was discovered in
the late 1960s by Gaudin and Mehta \cite{GaudinMehta}.  The connection
is the following formula for the density of the probability measure on
eigenvalues induced by \eqref{eq:001}:
\begin{equation}
\label{eq:002}
\mathbb{P}_{N}(\lambda_{1},\dots,\lambda_{N}) =  
\frac{1}{N!}\det\left(  K_{N}(\lambda_{i}, \lambda_{j}) \right)_{1 \le i,j \le N},
\end{equation}
where the function $K_{N}(x,y)$ is the so-called \emph{reproducing kernel} of
orthogonal polynomials:
\begin{equation}
K_{N}(x,y) = e^{-N\left(V(x) + V(y) \right)/2} \sum_{n=0}^{N-1} p_{n}(x) p_{n}(y),
\end{equation}
the polynomials $p_{n}(x)$ being defined in
\eqref{eq:OPDEF1}--\eqref{eq:OPDEF2}.  It is a basic result of the
theory that \eqref{eq:002} indeed defines a probability measure on
$\mathbb{R}^N$.

From formula \eqref{eq:002} one may effectively compute many
statistical quantities involving the eigenvalues (see \cite{Mehta},
and also \cite{Deift}).  Two examples are as follows:
\begin{itemize}
\item \emph{Mean density of eigenvalues} $\rho_{1}^{(N)}(\lambda)$
  defined as
\begin{equation}
  \rho_{1}^{(N)}(\lambda) := \frac{d}{d \lambda} \  \mathbb{E}_{N} \left(
    \frac{1}{N} \# \left\{ \text{eigenvalues $\lambda_{j}$ such that $\lambda_{j} < \lambda$} \right\}
  \right),
\end{equation}
where $\mathbb{E}_{N}(\cdot)$ denotes the expectation of $\cdot$ with
respect to the probability measure \eqref{eq:001} or equivalently
\eqref{eq:002}.  This may be equivalently represented in terms of the
orthogonal polynomials:
\begin{equation}
\rho_{1}^{(N)}(\lambda) = \frac{1}{N} K_{N}(\lambda, \lambda).
\end{equation}
\item \emph{Gap probabilities} $F_{(a,b)}$ defined as
\begin{equation}
\label{eq:gap01}
F_{(a,b)}:=\text{\bf Prob}\left(\text{no eigenvalues in $(a,b)$}\right),
\end{equation}
which may be equivalently represented in terms of a Fredholm
determinant built out of the orthogonal polynomials:
\begin{equation}
\label{eq:gap02}
F_{(a,b)}
= \det\left(1 - \left.\mathcal{K}_{N}\right|_{L^{2}(a,b)}\right).
\end{equation}
Here the integral operator $\mathcal{K}_{N}: L^{2}(a,b) \rightarrow
L^{2}(a,b)$ possesses the integral kernel $K_{N}(x,y)$:
\begin{equation}
\mathcal{K}_{N} h (x)= \int_{a}^{b} K_{N}(x,y) h(y)\,dy.
\end{equation}
\end{itemize}
One important example of the gap probability described in
\eqref{eq:gap01} and \eqref{eq:gap02} is the case that $b=\infty$, for
then the gap probability coincides with the distribution function of
the largest eigenvalue:
\begin{equation}
F_{(a,+\infty)}=\text{\bf Prob}\left(\lambda_{\text{max}} < a \right) = 
\det\left(1 - \left.\mathcal{K}_{N}\right|_{L^2(a, +\infty)}\right).
\end{equation}

\subsubsection{Asymptotic behavior as $N \to \infty$}
A basic and important result concerning the $N\to \infty$ asymptotic
behavior of random matrices is that the mean density of eigenvalues
$\rho_{1}^{(N)}$ has a limit: for all $\lambda \in \mathbb{R}$,
\begin{equation}
\lim_{N \to \infty} \rho_{1}^{(N)}(\lambda) = \psi(\lambda).
\label{eq:densitylimit}
\end{equation}
Note: the \emph{Gaussian Unitary Ensemble} (GUE) first studied by
Wigner corresponds to $V(x) = x^{2}$, and in this case $\psi(\lambda)
= \pi^{-1} \sqrt{2 - \lambda^{2} } $, which is the famous \emph{Wigner
  semicircle law}.  It is well-known that the limit
\eqref{eq:densitylimit} exists for quite general $V(x)$.  It is also
known that if $V$ is real analytic, the convergence in
\eqref{eq:densitylimit} is uniform.  For those nonanalytic $V$ for
which existence of the limiting density $\psi(\lambda)$ can be
established, the convergence implied by the statement
\eqref{eq:densitylimit} has only been proven in a weaker sense.  One
consequence of the present work is that the convergence in
\eqref{eq:densitylimit} is in fact uniform assuming only that the
function $V$ possesses 2 Lipschitz continuous derivatives.

The function $\psi$ is also a well-known quantity in approximation
theory, where it is referred to as the density of the
\emph{equilibrium measure}.  The equilibrium measure is defined in
subsection \ref{subsec:EQMEAS} (for the purposes of the current
discussion one may take the parameter $c$ appearing in the definition
of the equilibrium measure to be unity).

In many circumstances, the largest eigenvalue distribution has been
shown to possess a limit as $N\to\infty$ known as the
\emph{Tracy-Widom distribution}, a distribution function expressible
in closed form in terms of the Hastings-McLeod solution of the
Painlev\'e II equation.  The form of the asymptotic result is:
\begin{equation}
\label{eq:TW}
\lim_{N \to \infty}  \text{\bf Prob}\left( \lambda_{\mathrm{max}} < \beta + 
(\lambda N)^{-2/3} s \right) = F_{\mathrm{TW}}(s)
\end{equation}
where the constant $\lambda$ depends on the external field $V$,
$\beta=\sup(\mathrm{supp}(\psi))$, and $F_{\mathrm{TW}}(s)$ is the
famous Tracy-Widom distribution.

Another fundamental object concerning the eigenvalues of random
matrices is the limiting spacing distribution.  Properly speaking,
this is defined in terms of the \emph{spacing} between ordered
eigenvalues; however a ``poor-man's'' version of this distribution is
the following (easier to define) quantity:
\begin{equation}
\label{eq:SineKer}
Q(s) := \lim_{N \to \infty} \text{\bf Prob}\left(\text{no eigenvalues in $\displaystyle\left(a, a + \frac{s}{N \rho_{1}^{(N)}(a)}\right)$} \right).
\end{equation}
This limit is known to exist provided the external field is real
analytic and provided that $a$ is such that $\psi(a)>0$, and it turns
out that the function $Q(s)$ which emerges in the limit is
\emph{universal} in that it does not depend on properties of the
function $V$.  Indeed, under the assumption that $V$ is real analytic,
one has
\begin{equation}
Q(s) = \det\left(1 - \left.\mathcal{S}\right|_{L^{2}(0,s)}\right),
\end{equation}
where $\mathcal{S}$ is an integral operator on the interval $(0,s)$:
\begin{equation}
\mathcal{S}h(x) := \int_{0}^{s} \frac{\sin(\pi(x - y))}{\pi ( x - y) } h(y)\,dy.
\end{equation}

\vskip 0.2in
Via the connection to orthogonal polynomials explained earlier, the
following asymptotic result concerning the reproducing kernel
$K_{N}(x,y)$ built from the orthogonal polynomials implies
\eqref{eq:TW}:

\begin{ar}
There is a constant $\lambda$ so
  that for every $u,v \in \mathbb{R}$, we have
\begin{equation}
  \lim_{N \to \infty} \frac{1}{(\lambda N)^{2/3}} 
  K_{N}\left(\beta + \frac{u}{(\lambda N)^{2/3}}, \beta + \frac{v}{(\lambda N)^{2/3}} \right) 
  =  \frac{\mathrm{Ai}(u)\mathrm{Ai}'(v)-\mathrm{Ai}(v)\mathrm{Ai}'(u)}{u-v}.
\end{equation}
Here $\mathrm{Ai}$ denotes the unique solution to Airy's equation $y''
= xy$ that is real, and that behaves as follows as $x \to +\infty$:
$\mathrm{Ai}(x) \sim e^{-2 x^{3/2}/3} / ( 2\sqrt{\pi} x^{1/4})$.
\label{ar:Airykernel}
\end{ar}
Similarly, the limit appearing in \eqref{eq:SineKer} is implied by the
following result:
\begin{ar}
For every $a$ with
  $\psi(a)>0$, and every $u,v \in \mathbb{R}$, we have
\begin{equation}
\label{eq:sinkernel}
\lim_{N \to \infty} \frac{1}{N \psi(a) }K_{N}\left(a + \frac{u}{N \psi(a)}, a + \frac{v}{N \psi(a)} \right) = 
\frac{\sin(\pi(u-v ))}{\pi ( u - v) }. 
\end{equation}
\label{ar:sinekernel}
\end{ar}

Asymptotic Result \ref{ar:Airykernel} was first established in the
special case of the Gaussian Unitary Ensemble (i.e. $V(x) = x^{2}$)
\cite{TracyWidom}, using the classical Plancherel-Rotach asymptotics
of Hermite polynomials \cite{PlaRot}.  This was extended to quartic
potentials of the form $V(x)=x^4-\gamma x^2$ in \cite{BI}, where
furthermore Asymptotic Result \ref{ar:sinekernel} was also
established.  Because the polynomials associated with quartic $V$ are
not known to possess elementary contour integral representations, the
analysis of \cite{BI} required a new method, namely the use of the
Riemann-Hilbert formulation of orthogonal polynomials found in
\cite{FIK2}.  Asymptotic Result \ref{ar:sinekernel} was established for
general real analytic potentials $V$ in \cite{op2}, and the asymptotic
formulae for orthogonal polynomials given in \cite{op2} were used to
establish Asymptotic Result \ref{ar:Airykernel} in \cite{DG06}.  The
new strategy introduced in \cite{op2} was a general method linking the
equilibrium measure associated with $V$ to a so-called $g$-function
enabling the use of the non-commutative steepest descent technique for
Riemann-Hilbert problems originally invented by Deift and Zhou
\cite{steepestintro} and extended in \cite{steepestKdV}.  Pastur and
Shcherbina \cite{PS97} have also studied the problem of establishing
Asymptotic Result \ref{ar:sinekernel} under the assumption that $V$
has three continuous derivatives.

As is clear from the above discussion, the historical trend is toward
establishing Asymptotic Results \ref{ar:Airykernel} and
\ref{ar:sinekernel} for more and more general external fields $V$.
The program of \emph{universality} in random matrix theory is
concerned with determining the most general external fields $V$ under
which such results hold true.  In particular, it is of some interest
to admit external fields that are not real analytic.  As pointed out
by Deift in \cite{DeiftOpen}, the steepest descent method that works
so well for analytic $V$ cannot be easily applied to the nonanalytic
case.  The authors recently introduced a ``$\dbar$ steepest descent
method'' applicable to some Riemann-Hilbert problems involving
nonanalytic data, but as formulated in \cite{DbarOP1} this method does
not apply to the orthogonal polynomials described by conditions
\eqref{eq:OPDEF1}--\eqref{eq:OPDEF2} because the equilibrium measure
is compactly supported and the endpoints of support obstruct the type
of nonanalytic deformations involved in the method.  

Among the applications of the results in this manuscript are proofs of
Asymptotic Results \ref{ar:Airykernel} and \ref{ar:sinekernel} under
weakened hypotheses on the external field $V$ (we require two
Lipschitz continuous derivatives) via rigorous Plancherel-Rotach type
asymptotics for orthogonal polynomials.  Our method involves a hybrid
``Riemann-Hilbert-$\dbar$ steepest descent method'' generalizing the
simpler method of \cite{DbarOP1} to handle support endpoints. Our
results hold under weaker conditions on $V$ than those under which
Asymptotic Result \ref{ar:sinekernel} is considered in the paper
\cite{PS97}, and to our knowledge we have the first proof that
Asymptotic Result \ref{ar:Airykernel} holds in the absence of
analyticity of $V$.

We remark that it is not necessary to first obtain asymptotics for the
orthogonal polynomials themselves in order to deduce enough
information about the reproducing kernels $K_N(x,y)$ to establish
Asymptotic Results \ref{ar:Airykernel} and \ref{ar:sinekernel} for
certain general external fields $V$.  For example, a recently
introduced method (based on a comparison principle for Christoffel
functions) of Levin and Lubinsky has been quite successful in
establishing Asymptotic Result \ref{ar:sinekernel} \cite{LLBulk} under
extremely weak global conditions on the external field $V$ and
stronger local conditions (but still far from analyticity) near the
point $a$ of expansion in the spectrum.  Also, Asymptotic Result
\ref{ar:Airykernel} has been studied for real analytic $V$ without
the use of orthogonal polynomials by Pastur and Shcherbina
\cite{PS03}.  

\subsection{Essence of the $\dbar$ method}
In the asymptotic analysis of Riemann-Hilbert problems there is an
analog of contour deformations which plays a crucial role in
identifying subsets of the plane which produce the dominant
contribution to the Riemann-Hilbert problem's solution.  It is common
to begin with a Riemann-Hilbert problem whose solution, $\mathbf{A}$,
is analytic off a given contour $\Sigma_{\mathbf{A}}$, and to use
explicit piecewise analytic quantities to define a new matrix
$\mathbf{B}$ solving a new equivalent Riemann-Hilbert problem in which
the relevant contour $\Sigma_{\mathbf{B}}$ is a deformation of the
original contour $\Sigma_{\mathbf{A}}$.

A fundamental obstacle to this procedure occurs when one requires the
analytic extension from a given contour of a rapidly oscillating
function whose phase possesses no analyticity properties. For the
asymptotic analysis of orthogonal polynomials with varying weights on
the real line, in which the external field $V$ possesses only finitely
many derivatives, this is a central issue.

In this paper, the new approach which circumvents this problem is to
depart from Riemann-Hilbert problems entirely, by introducing
transformations that explicitly violate analyticity.  Instead of
Riemann-Hilbert problems, we then characterize our newly-defined
matrix-valued function as the unique solution of a $\dbar$ problem.

Given a smooth matrix-valued function $\mathbf{W}(x,y)$ of compact
support in $\mathbb{R}^{2}$, a $\dbar$ problem is a
first-order system of linear partial differential equations on
$\mathbb{R}^2$ involving $\mathbf{W}(x,y)$ as coefficients and the
Cauchy-Riemann operator
\begin{equation}
\dbar:=\frac{1}{2}\left(\frac{\partial}{\partial x} + 
i\frac{\partial}{\partial y}\right)
\label{eq:CR}
\end{equation}
acting on the unknown. Here is a prototypical example.
\setcounter{dbp}{-1}
\begin{dbp}[Prototype]
Determine a $2\times 2$ matrix $\mathbf{A}(x,y)$ for $(x,y)\in\mathbb{R}^2$
having the following properties:
\begin{itemize}
\item[]\textbf{Continuity.}  $\mathbf{A}(x,y)$ is a continuous function
  of $x$ and $y$ for $x+iy\in\mathbb{C}$.
\item[]\textbf{Deviation From Analyticity.}  For $x+iy\in\mathbb{C}$, 
\begin{equation}
\dbar\mathbf{A}(x,y) = \mathbf{A}(x,y)\mathbf{W}(x,y), 
\label{eq:dbarEXample}
\end{equation}
(note that $\mathbf{A}(x,y)$ is analytic off the support of
$\mathbf{W}$, because there one has $\dbar\mathbf{A} (x,y) = 0$).
\item[]\textbf{Normalization.}  The matrix $\mathbf{A}(x,y)$ is
  normalized as follows:
\begin{equation}
\lim_{x,y\rightarrow\infty}\mathbf{A}(x,y)=\mathbb{I}.
\label{eq:dbarEXANorm}
\end{equation}
\end{itemize}
\label{dbp:prototype}
\end{dbp}

Once one admits the possibility to use non-analytic extensions of
functions originally defined on contours, one is faced with an
overabundance of choices, and the issue becomes one of selecting,
constructing, or otherwise establishing the existence of, an extension
suitable for subsequent asymptotic analysis.

This idea actually yields an interesting approach to a classical
result of asymptotic analysis.  Given a real-valued function $\theta:
[-1,1] \to \mathbb{R}$ satisfying $\theta(0) = \theta'(0) = 0$,
$\theta''(0) > 0$, the problem is to provide a large $n$ asymptotic
description for the integral
\begin{equation}
\label{eq:steepintex01}
I(n) := \int_{-1}^{1}e^{i n \theta(x) }\, dx\,.
\end{equation}
For convenience, let us assume that $\theta'''(x)$ is bounded and
$\theta''(x)\ge w>0$ for all $x \in [-1,1]$.

The usual approach to this problem (see, for example, \cite{AAA})
involves many steps, including integration by parts, implicit variable
changes, and Taylor expansions, the result of which is
\begin{equation}
\label{eq:steepintXXX}
I(n) = \sqrt{ \frac{ 2 \pi }{ n \theta''(0)}} e^{ i \pi / 4} \left( 1 + \mathcal{O} \left( n^{-1/2} \right) \right).
\end{equation}
We may instead establish this in the following way, which elucidates
certain aspects of the methods we use in the sequel.  Let $\Theta(x,y)$
represent an arbitrary extension of $\theta(x)$, which satisfies
$\Theta(x,0) = \theta(x)$.  Then with the aid of Stokes' theorem, we may
write
\begin{equation}
\label{eq:steepintSto}
I(n) = - \int_{\Gamma} e^{ i n \Theta(x,y)}\, d(x+iy) + 
2i \iint_{A} \dbar e^{i n \Theta(x,y)} \,dA,
\end{equation}
where $\Gamma$ represents a contour in $\mathbb{C}$ from $1$ to $-1$
(different than the interval $[-1,1]$), and $A$ represents the
(oriented) area enclosed by the oriented contour formed by the union
of $[-1,1]$ with $\Gamma$.  See Figure~\ref{fig:StatPhase}.
\begin{figure}
\includegraphics{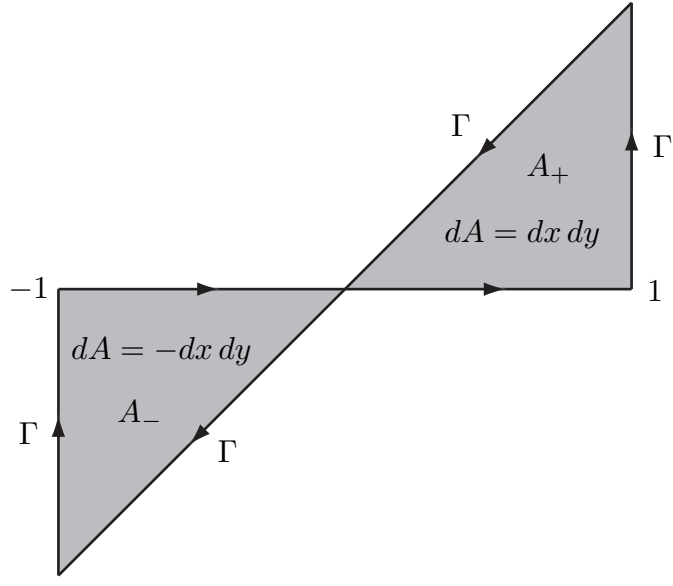}
\caption{The contour $\Gamma$ and the oriented area $A=A_+\cup A_-$
  for the analysis of $I(n)$.}
\label{fig:StatPhase}
\end{figure}

Were the function $\Theta(x,y)$ analytic, the double integral appearing on
the right-hand side of \eqref{eq:steepintSto} would not be present,
and we could choose the contour $\Gamma$ to be the contour of steepest
descent.  Although $\Theta(x,y)$ cannot be analytic if only three derivatives
of $\theta(x)$ are assumed to exist, we nonetheless observe that the
right-hand side of \eqref{eq:steepintSto} is still independent of the
choice of both the contour $\Gamma$ and the particular extension
$\Theta(x,y)$.  This begs the question: Can we pick the extension
$\Theta(x,y)$ and the contour $\Gamma$ so that the right-hand side of
\eqref{eq:steepintSto} may be easily estimated?  The answer is yes.

We take $\Gamma$ to be the contour comprised of a vertical segment
from $(1,0)$ to $(1,1)$, followed by the line segment connecting
$(1,1)$ to $(-1,-1)$, and ending with the vertical line segment from
$(-1,-1)$ to $(-1,0)$, and let $A_{+}$ and $A_{-}$ denote the interior
of the two triangles formed by this contour and the real interval
$[-1,1]$, $A_{+}$ in the first quadrant, and $A_{-}$ in the third
quadrant, exactly as illustrated in Figure~\ref{fig:StatPhase}.  We
will explicitly construct an extension $\Theta(x,y)$ of $\theta(x)$
defined for $(x,y)\in \overline{A_+\cup A_-}$ to satisfy the following
conditions for some constants $K>0$ and $k>0$:
\begin{itemize}
\item[(C1)] $\Theta(x,0) = \theta(x)$, for $-1\le x\le 1$.
\item[(C2)] $\Theta(x,x) = \tfrac{1}{2}\theta''(0)(x + i x)^{2}$, 
for $-1 \le x \le 1$.
\item[(C3)] $\left| \dbar \Theta(x,y) \right| \le K y^2$ for all $(x,y)
  \in A_{+} \cup A_{-}$.
\item[(C4)] $\mathrm{Im}(\Theta(x,y)) \ge k x y $ for all $(x,y) \in 
\overline{A_{+} \cup A_{-}}$.
\end{itemize}
Using such an extension and properties (C1) and (C2), the
representation \eqref{eq:steepintSto} may be rewritten as
\begin{multline}
I(n) - e^{i\pi/4}\int_{-\sqrt{2}}^{\sqrt{2}} e^{- n \theta''(0) s^{2}/2}\, ds
\\
\begin{aligned}
&=i\int_{0}^{-1} e^{i n \Theta(-1,y)}\,dy + i\int_{1}^{0}e^{i n \Theta(1,y)}\,dy \\
&\quad\quad{}- 2 n \iint_{A_{+}}e^{i n \Theta(x,y)} \dbar \Theta(x,y)\, dx\, dy + 
2 n \iint_{A_{-}} e^{ i n \Theta(x,y) }\dbar \Theta(x,y)\, dx\, dy\,.
\end{aligned}
\label{eq:steepintrewrite}
\end{multline}
The four integrals on the right-hand side may be estimated directly
with the help of properties (C3) and (C4):
\begin{equation}
\left|i\int_0^{-1}e^{in\Theta(-1,y)}\,dy\right|\le
\int_0^1 e^{-n\mathrm{Im}(\Theta(-1,-s))}\,ds \le 
\int_0^1 e^{-k n s}\,ds \le \int_0^{+\infty}e^{-k ns}\,ds = 
\frac{1}{k n}\,.
\end{equation}
\begin{equation}
\left|i\int_1^{0}e^{in\Theta(1,y)}\,dy\right|\le
\int_0^1 e^{-n\mathrm{Im}(\Theta(1,s))}\,ds \le 
\int_0^1 e^{-k ns}\,ds \le \int_0^{+\infty}e^{- kns}\,ds = 
\frac{1}{k n}\,.
\end{equation}
\begin{equation}
\begin{split}
\left|\mp 2n\iint_{A_\pm}e^{in\Theta(x,y)}\dbar\Theta(x,y)\,dx\,dy\right|&\le
2n\iint_{A_\pm}e^{-n\mathrm{Im}(\Theta(x,y))}\left|\dbar\Theta(x,y)\right|\,dx\,dy\\
&\le 2Kn\iint_{A_\pm}e^{-k nxy}y^2\,dx\,dy\\
&= 2Kn\iint_{A_+}e^{-k nxy}y^2\,dx\,dy\\
&\le 2Kn\int_0^\infty r\,dr\int_0^{\pi/4} d\theta\,
e^{-knr^2\cos(\theta)\sin(\theta)}r^2\sin^2(\theta)\\
&=\frac{2K}{k^2n}\int_0^\infty e^{-u^2}u^3\,du\int_0^{\pi/4}\frac{d\theta}{\cos^2(\theta)}\,.
\end{split}
\end{equation}
Therefore all terms on the right-hand side of
\eqref{eq:steepintrewrite} are $\mathcal{O}(n^{-1})$ as $n\to\infty$.
Now, since
\begin{equation}
e^{i\pi/4}\int_{-\sqrt{2}}^{\sqrt{2}} e^{- n \theta''(0) s^{2}/2}\, ds = 
\sqrt{ \frac{2 \pi}{n \theta''(0)}} e^{i \pi / 4 } + \text{exponentially
small terms as $n\to\infty$},
\end{equation}
we have established \eqref{eq:steepintXXX} if we can find an extension
$\Theta(x,y)$ of $\theta(x)$ satisfying conditions (C1)--(C4).

The extension $\Theta(x,y)$ may be defined as follows.  First let $B(t)$
represent a ``cut-off'' or ``bump'' function which is infinitely
differentiable and satisfies $B(t) \equiv 0$ for $t$ near $0$, and
$B(t) \equiv 1$ for $t$ near $1$.  More precisely, we assume that
$B:\mathbb{R}\to [0,1]$ is of class $C^{(\infty)}(\mathbb{R})$ and
satisfies $B(x)\equiv 0$ for $x\le 0$ and $B(x)\equiv 1$ for $x\ge 1$.
An example of such a function is 
\begin{equation}
B(t):=\begin{cases}
0\,,& x\le 0\\
\displaystyle
\frac{1}{2}\tanh\left(\frac{t}{1-t^2}\right)+\frac{1}{2}\,,& 0<x<1\\
1\,,&x\ge 1\,,
\end{cases}
\end{equation}
but our analysis will never require the detail of this formula.  Next
define
\begin{equation}
\begin{split}
\Theta_{0}(x,y) &:= \theta(x) + i y \theta'(x) + \frac{1}{2} ( i y )^{2} \theta''(x), \\
\Theta_{0}^{\mathrm{hol}}(x,y) &:= \frac{1}{2}\theta''(0) ( x + i y )^{2}.
\end{split}
\end{equation}
Our extension $\Theta(x,y)$ is then defined via
\begin{equation}
\Theta(x,y) := \left[ 1 - B\left( \frac{y}{x} \right) \right] \Theta_{0}(x,y) 
+  B\left( \frac{y}{x} \right) \Theta_{0}^{\mathrm{hol}}(x,y).
\label{eq:steepintPhidef}
\end{equation}
Note that the function $\Theta_{0}(x,y)$ is an extension of $\theta(x)$
that satisfies
\begin{equation}
\label{eq:steepdbarphi1}
\dbar \Theta_{0}(x,y) = \frac{1}{4} (i y )^{2} \theta'''(x).
\end{equation}
The function $\Theta_0(x,y)$ is a rectilinear version of the type of
extension discussed in \cite{DbarOP1}.  It does not match the desired
quadratic on the diagonal part of the contour $\Gamma$; we use the
function $B$ to smoothly deform this extension to the quadratic
$\Theta_{0}^{\mathrm{hol}}(x,y)$.  Straightforward calculations show
that $\Theta(x,y)$ defined in \eqref{eq:steepintPhidef} satisifes the
four conditions (C1)-(C4) described above.  Indeed,
$\Theta(x,0)=\Theta_0(x,0)=\theta(x)$, so condition (C1) holds, and
$\Theta(x,x)=\Theta_0^{\mathrm{hol}}(x,x)=\tfrac{1}{2}\theta''(0)(x+ix)^2$,
so condition (C2) holds.  To confirm condition (C3), note first that
$\dbar\Theta_0^{\mathrm{hol}}(x,y)\equiv 0$, so using
\eqref{eq:steepdbarphi1} we have
\begin{equation}
\dbar\Theta(x,y)=-\left[1-B\left(\frac{y}{x}\right)\right]
\frac{1}{4}\theta'''(x)y^2 + \frac{1}{2}B'\left(\frac{y}{x}\right)\left[\frac{y}{x^2}-i\frac{1}{x}\right]
\left[\Theta_0(x,y)-\Theta_0^{\mathrm{hol}}(x,y)\right],
\end{equation}
and then by Taylor expansion
\begin{equation}
\Theta_0(x,y)-\Theta_0^{\mathrm{hol}}(x,y)=\frac{1}{3}\theta'''(\xi_1)x^3+\frac{i}{2}\theta'''(\xi_2)x^2y -\frac{1}{2}\theta'''(\xi_3)xy^2,
\end{equation}
for some numbers $\xi_1$, $\xi_2$, and $\xi_3$ in $[-1,1]$, so since
$\theta'''$ is uniformly bounded and $0\le y/x \le 1$ for $(x,y)\in
\overline{A_+\cup A_-}$, we have
\begin{equation}
\Theta_0(x,y)-\Theta_0^{\mathrm{hol}}(x,y) = \mathcal{O}(x^3),
\end{equation}
and so
\begin{equation}
\dbar\Theta(x,y) = \mathcal{O}(y^2) + B'\left(\frac{y}{x}\right)
\mathcal{O}(x^2).
\end{equation}
Finally, since $B'(t)=\mathcal{O}(t^2)$ holds for all
$t\in\mathbb{R}$, we have confirmed condition (C3).  To check condition (C4),
we calculate directly
\begin{equation}
\begin{split}
\mathrm{Im}(\Theta(x,y)) &= 
\left[1-B\left(\frac{y}{x}\right)\right]\mathrm{Im}(\Theta_0(x,y)) + 
B\left(\frac{y}{x}\right)\mathrm{Im}(\Theta_0^{\mathrm{hol}}(x,y))\\
&=\left[1-B\left(\frac{y}{x}\right)\right]y\theta'(x) + B\left(\frac{y}{x}\right)\theta''(0)xy\\
&=\left[1-B\left(\frac{y}{x}\right)\right]xy\theta''(\xi) + B\left(\frac{y}{x}\right)\theta''(0)xy\,,
\end{split}
\end{equation}
for some number $\xi\in [-1,1]$.  Then since by assumption $\theta''(x)\ge w>0$
holds for $x\in [-1,1]$ and since $B:\mathbb{R}\to[0,1]$, we have
\begin{equation}
\mathrm{Im}(\Theta(x,y))\ge\left[1-B\left(\frac{y}{x}\right)\right]wxy +
B\left(\frac{y}{x}\right)wxy  = wxy,
\end{equation}
so condition (C4) is verified as well.

Now we will be starting with a $2 \times 2$ matrix $\mathbf{B}$, which
is the solution of a Riemann-Hilbert problem in which the jump matrix
contains entries of the form $e^{i n \theta(x)}$, with $\theta$ real,
and possessing only $2$ Lipschitz continuous derivatives.  We will
define an extension of $\theta$ in exactly the spirit of the above
calculations, and use it to define a new matrix-valued function
$\mathbf{D}$, that is no longer analytic.  The matrix-valued function
$\mathbf{D}$ will be characterized by a hybrid Riemann-Hilbert-$\dbar$
problem.  The main point is this: our extension of $\theta$ will be
chosen so that this hybrid Riemann-Hilbert-$\dbar$ problem succumbs
to a large-$n$ asymptotic analysis.

\subsection{The equilibrium measure and associated quantities.}
\label{subsec:EQMEAS}
The so-called equilibrium measure associated with the function $V(x)$
and the ratio $c:=N/n$ is well-known to be a key ingredient in
large-degree asymptotics of the polynomial $p_n(z)$ of degree $n$ in
the orthonormal system associated with the measure $d\nu_N(x)$ defined
in terms of $N$ and $V$ by \eqref{eq:OPMeas}.  Here $c>0$ is held
fixed as $n$ (and hence also $N$) tends to infinity.  Generally, given
a real-valued field $V(x)$ defined for $x\in\mathbb{R}$ and a
parameter $c>0$, we may consider the following associated weighted
energy of a positive charge (Borel measure) $\mu$ on the real line
$\mathbb{R}\subset\mathbb{C}$:
\begin{equation}
E[\mu]:=\int_{\mathrm{supp}(\mu)}\int_{\mathrm{supp}(\mu)}
\log\left(\frac{1}{|x-y|}\right)\,d\mu(x)\,d\mu(y) + c
\int_{\mathrm{supp}(\mu)}
V(x)\,d\mu(x).
\end{equation}
The equilibrium measure $\mu_{*}$ is defined to be the unique positive
measure $\mu_{*}$ minimizing $E[\mu]$ subject to the constraint
\begin{equation}
\int_{\mathrm{supp}(\mu)}d\mu(x) = 1.
\label{eq:constraint}
\end{equation}
The equilibrium measure is equivalently characterized by the
corresponding Euler-Lagrange variational conditions.  There is a real
constant $\ell$ (the Lagrange multiplier originating from the
constraint \eqref{eq:constraint}) such that
\begin{equation}
\frac{\delta E}{\delta \mu}\Bigg|_{\mu=\mu_*}\hspace{-0.25 in}(x):=
2\int_{\mathrm{supp}(\mu_*)}
\log\left(\frac{1}{|x-y|}\right)\,d\mu_*(y) + cV(x) \equiv - \ell,
\quad
x\in\mathrm{supp}(\mu_*),
\label{eq:ELinside}
\end{equation}
and
\begin{equation}
\frac{\delta E}{\delta \mu}\Bigg|_{\mu=\mu_*}\hspace{-0.25 in}(x):=
2\int_{\mathrm{supp}(\mu_*)}
\log\left(\frac{1}{|x-y|}\right)\,d\mu_*(y) + cV(x) \ge - \ell,
\quad
x\not\in\mathrm{supp}(\mu_*).
\label{eq:ELoutside}
\end{equation}

\subsection{Assumptions on external field $V$}
\label{subsec:Ass}
We now impose several conditions on the external field, some of which
are best described in terms of the equilibrium measure and its complex
valued ``log-transform'' $g(z)$ defined below in \eqref{eq:gdef}.

\setcounter{condition}{-1}
\begin{condition}[Smoothness of $V$] 
The external field $V$ possesses
  two Lipschitz continuous derivatives.  
\label{cond:smooth}
\end{condition}
\noindent A consequence of this is that the equilibrium measure is absolutely
continuous with respect to Lebesgue measure, with continuous density
$\psi(x)$.

\begin{condition}[Support properties of $\mu_*$] 
We suppose
  that the external field $V$ is such that the equilibrium measure is
  supported on a finite union of intevals,
  $\cup_{j=1}^{G+1}[\alpha_{j},\beta_{j}]$, with $ \alpha_{1} <
  \beta_{1} < \alpha_{2} <\beta_{2} < \cdots < \beta_{G+1}$.
\label{cond:support}
\end{condition}
\noindent By convention for future convenience, we set $\beta_0:=-\infty$ and
$\alpha_{G+2}:=+\infty$.

To describe further conditions on $V$ imposed via its equilibrium
measure $\mu_{*}$, we will require an auxiliary function $g(z)$
analytic for $z \in \mathbb{C} \setminus (-\infty, \beta_{G+1})$
defined in terms of $\mu_{*}$ by
\begin{equation}
g(z):=\int\log(z-s)\,d\mu_*(s) = \int_{\alpha_{1}}^{\beta_{G+1}} \log(z-s)\psi(s)\,ds,
\label{eq:gdef}
\end{equation}
where $\psi(x)$ is the Radon-Nikodym derivative of $\mu_*$, that is,
$d\mu_{*}(x)=\psi(x)\,dx$.  Here we are choosing the branch cut of the
integrand so that for each $s\in\mathbb{R}$, $\log{(z-s)}$ is an
analytic function of $z$ for $z \in \mathbb{C} \setminus (-\infty,s]$
that is real-valued for $z>s$, which ensures the claimed analyticity
properties of $g(z)$.  In terms of $g(z)$ the variational condition
\eqref{eq:ELinside} becomes
\begin{equation}
cV(x)-\left(g_+(x)+g_-(x)\right)=- \ell,\quad
\alpha_{j}<x<\beta_{j},\quad j = 1, \ldots, G+1,
\label{eq:ELinsideSpecial}
\end{equation}
where $g_+(x)$ and $g_-(x)$ denote the boundary values taken by $g(z)$
as $z\rightarrow x$ with $z\in \mathbb{C}_+$ and $z\in\mathbb{C}_-$
respectively.  Also, Condition 1 and the reality of the equilibrium
measure together imply that there are real constants
$\Omega_0,\dots,\Omega_G$ such that
\begin{equation}
g_+(x)-g_-(x)=\begin{cases} i\Omega_0,&\quad x<\alpha_1,\\
i\Omega_j,&\quad \beta_j<x<\alpha_{j+1}\,,\quad j=1,\dots,G,\\
0,&\quad x>\beta_{G+1}\,,
\end{cases}
\label{eq:DifferenceOutside}
\end{equation}
and from the normalization \eqref{eq:constraint} it follows further
that $\Omega_0=2\pi$.  Furthermore, since $\mu_*$ is a positive
measure, 
\begin{equation}
\theta(x):=-i( g_{+} (x)- g_{-}(x) )
\label{eq:thetadefine}
\end{equation}
is real and nonincreasing for all $x\in\mathbb{R}$, so in particular
$2\pi=\Omega_0>\Omega_1>\cdots >\Omega_G>0$.  Assuming that
differentiation commutes with taking boundary values (this may be
easily justified later) \eqref{eq:ELinsideSpecial} and
\eqref{eq:DifferenceOutside} imply that
\begin{equation}
\begin{split}
g'_+(x)+g'_-(x)&=cV'(x)\,,\quad
\alpha_{j}<x<\beta_{j},\quad j = 1, \ldots, G+1,\\
g'_+(x)-g'_-(x)&=0, \quad x \in \mathbb{R} \setminus \mathrm{supp}(\psi).
\end{split}
\end{equation}
In particular, $g'(z)$ is an analytic function for
$z\in\mathbb{C}\setminus\mathrm{supp}(\psi)$.  

Next, for $x\in\mathbb{R}$, define the real-valued function
\begin{equation}
\phi(x):=cV(x)+\ell-g_+(x)-g_-(x).
\label{eq:phidefine}
\end{equation}
According to \eqref{eq:ELinside} and \eqref{eq:ELoutside}, we have
$\phi(x)\equiv 0$ for $x\in\mathrm{supp}(\psi)$ and $\phi(x)\ge 0$ for
$x\in\mathbb{R}\setminus\mathrm{supp}(\psi)$.  

Finally, for $j=1,\dots,G+1$ we define functions
$h_{\alpha_j}:(\beta_{j-1},\beta_j)\to\mathbb{R}$ and
$h_{\beta_j}:(\alpha_j,\alpha_{j+1})\to\mathbb{R}$ by the formulae
\begin{equation}
h_{\alpha_j}(x):=\begin{cases}
\displaystyle 
-\frac{\phi(x)}{\sqrt{\alpha_j-x}}, &\quad \beta_{j-1}<x<\alpha_j,\\
\\
\displaystyle\frac{\theta(\alpha_j)-\theta(x)}{\sqrt{x-\alpha_j}}, &\quad
\alpha_j<x<\beta_j,
\end{cases}
\label{eq:halphajdef}
\end{equation}
and
\begin{equation}
h_{\beta_j}(x):=\begin{cases}
\displaystyle\frac{\theta(x)-\theta(\beta_j)}{\sqrt{\beta_j-x}}, &\quad
\alpha_j<x<\beta_j,\\\\
\displaystyle -\frac{\phi(x)}{\sqrt{x-\beta_j}},&\quad \beta_j<x<\alpha_{j+1}.
\end{cases}
\label{eq:hbetajdef}
\end{equation}
Under the assumption of Condition~\ref{cond:smooth}, the definition
\eqref{eq:halphajdef} extends by continuity to $x=\alpha_j$ and the
definition \eqref{eq:hbetajdef} extends by continuity to $x=\beta_j$;
moreover, these functions will all have one Lipschitz continuous
derivative.  Moreover, if $x$ is bounded away from the support
interval endpoints, $h_{\alpha_j}(x)$ and $h_{\alpha_j}(x)$ will have
a second derivative that is also Lipschitz.  This is shown in
the Appendix in the case of $G=0$ but the same reasoning
also works for $G>0$.  Note that the nonnegativity of the equilibrium
measure implies that $h_{\alpha_j}(x)\ge 0$ and $h_{\beta_j}(x)\ge 0$
for $\alpha_j<x<\beta_j$, and the variational inequality
\eqref{eq:ELoutside} implies that $h_{\alpha_j}(x)\le 0$ for
$\beta_{j-1}<x<\alpha_j$ and that $h_{\beta_j}(x)\le 0$ for
$\beta_j<x<\alpha_{j+1}$.

Now we may state the rest of the conditions that we impose on the
external field $V$.

\begin{condition}[Strict inequalities and behavior at
  endpoints]
For $j=1,\dots,G+1$, we suppose that $\psi(x)>0$ for
  $\alpha_j<x<\beta_j$ and that the functions
  $h_{\alpha_j}:(\beta_{j-1},\beta_j)\to\mathbb{R}$ and
  $h_{\beta_j}:(\alpha_j,\alpha_{j+1})\to\mathbb{R}$ defined by
  \eqref{eq:halphajdef} and \eqref{eq:hbetajdef} 
  satisfy the strict inequalities
\begin{equation}
\text{$h_{\alpha_j}(x)<0$ for $\beta_{j-1}<x<\alpha_j$,}\quad\quad
\text{$h_{\alpha_j}(x)>0$ for $\alpha_j<x<\beta_j$,}\quad\quad
\text{$h_{\alpha_j}'(\alpha_j)>0$,}
\end{equation}
and
\begin{equation}
\text{$h_{\beta_j}(x)>0$ for $\alpha_j<x<\beta_j$,}\quad\quad
\text{$h_{\beta_j}(x)<0$ for $\beta_j<x<\alpha_{j+1}$,}\quad\quad
\text{$h_{\beta_j}'(\beta_j)<0$.}
\end{equation}
\label{cond:strict}
\end{condition}

\begin{condition}[Single interval of support
    w.l.o.g.]  
We assume that $G=0$.
\label{cond:G0}
\end{condition}  
\noindent The analysis for the case of $G>0$ (i.e. more than one interval
comprising the support of $\mu_{*}$) may be deduced in a
straightforward manner from the case of $G=0$ (i.e. one interval
comprising the support of $\mu_{*}$).  So, in the course of our
presentation of the details of the asymptotic analysis of the
orthogonal polynomials, we will assume, \textbf{without loss of
  generality}, that $G=0$ and hence the equilibrium measure is
supported on the single interval $[\alpha,\beta]=[\alpha_1, \beta_1]$.

\subsection{Statement of results}
\label{sec:Results}
Because of the complex-conjugation symmetry $p_{n}(z^*) = p_{n}(z)^*$,
we only need to present asymptotic formulae for the orthogonal
polynomials in the upper half-plane.  While our methods yield
asymptotic formulae valid throughout the whole complex plane, we will
restrict our attention to the regions $\Omega_{+}$ and
$\mathbb{C}_{+}\cap S_{\beta}$ as indicated in
Figure~\ref{fig:Regions}.  We focus on these regions for simplicity
and also because these are most important for applications to random
matrix theory.
\begin{figure}[h]
\begin{center}
\includegraphics{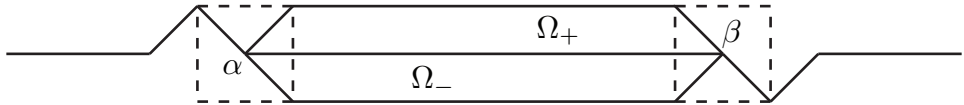}
\end{center}
\caption{The regions $\Omega_\pm$ of the complex plane surround the
  interval $(\alpha,\beta)$.  The square regions $S_\alpha$ centered
  at $\alpha$ and $S_\beta$ centered at $\beta$ are shown with dashed
  boundaries.  These squares have sides of length $2\delta$ for some
  $\delta>0$, and the regions $\Omega_\pm$ each have vertical
  thickness $\delta$.}
\label{fig:Regions}
\end{figure}
\begin{theorem}
  \label{thm:01}
  Suppose the external field $V$ satisfies Condition~\ref{cond:smooth}
  through Condition~\ref{cond:G0} as described earlier.  Let $n, N \to
  \infty$ so that $N/n \to c$ with $0 < c < \infty$.  Then, with
  $(\alpha,\beta)$ representing the support of the equilibrium measure
  $\mu_*$, the following asymptotic descriptions are valid.
\begin{itemize}
\item[1.] The asymptotic behavior of the leading coefficients
  $\kappa_{n-1,n-1}$ and $\kappa_{n,n}$ is given by
  \eqref{eq:kappaasymp}.
\item[2.] For $z$ within $\Omega_{+}$ but outside the squares
  $S_\alpha$ and $S_\beta$, the orthogonal polynomials $p_{n}(z)$ and
  $p_{n-1}(z)$ possess the asymptotic description \eqref{eq:polyas1}
  and \eqref{eq:polyas2} respectively (see also
  \eqref{eq:polyas1a}--\eqref{eq:polyas2a} and
  \eqref{eq:polyas1b}--\eqref{eq:A21axis}).
\item[3.] For $z$ within $S_\beta\cap \mathbb{C}_+$, the orthogonal
  polynomials $p_{n}(z)$ and $p_{n-1}(z)$ possess the asymptotic
  descriptions \eqref{eq:AiryAsTri1}--\eqref{eq:AiryAsTri2}
  respectively (see also \eqref{eq:A11asymp}--\eqref{eq:A11asymp2},
  \eqref{eq:A11betaclose}--\eqref{eq:A21betaclose}, and
  \eqref{eq:A11betacloseAGAIN}--\eqref{eq:A21betacloseAGAIN}).
\item[4.] Asymptotic formulae for the derivatives of the orthogonal
  polynomials may be obtained by differentiating the leading-order
  asymptotics for the polynomials themselves, as described in
  Section~\ref{sec:derivs}.
\end{itemize}
\end{theorem}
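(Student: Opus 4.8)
The plan is to prove Theorem~\ref{thm:01} by a steepest-descent analysis of the Fokas--Its--Kitaev Riemann--Hilbert problem for the orthogonal polynomials, in which the analytic lens-opening step of the classical method is replaced by a non-analytic deformation of exactly the kind carried out above for the model integral $I(n)$. First I would recall the $2\times2$ matrix $\mathbf{Y}(z)$, analytic off $\mathbb{R}$, whose first column encodes $p_n$ and $p_{n-1}$, whose jump across $\mathbb{R}$ has the single nontrivial entry $e^{-NV(x)}$ in the upper-right corner, and which satisfies $\mathbf{Y}(z)z^{-n\sigma_3}\to\mathbb{I}$ as $z\to\infty$. Conjugating by the $g$-function of the equilibrium measure of \eqref{eq:gdef} — schematically $\mathbf{T}(z)=e^{n\ell\sigma_3/2}\mathbf{Y}(z)e^{-n(g(z)+\ell/2)\sigma_3}$ — normalizes $\mathbf{T}$ to the identity at infinity and, using the Euler--Lagrange relations \eqref{eq:ELinsideSpecial}--\eqref{eq:ELoutside} together with $\theta$ and $\phi$ from \eqref{eq:thetadefine} and \eqref{eq:phidefine}, converts the jump into one that is purely oscillatory with phase $\pm in\theta(x)$ on the band $(\alpha,\beta)$ and exponentially close to the identity (governed by $e^{-n\phi(x)}$, with $\phi\ge0$) on $\mathbb{R}\setminus[\alpha,\beta]$. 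The usual next move is to factor the oscillatory jump into lower- and upper-triangular pieces and absorb them into a lens deformation; but the triangular entries $e^{\pm in\theta(x)}$ admit no analytic continuation, since $\theta$ inherits from $V$ only two Lipschitz derivatives.

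This is where the $\dbar$ idea enters. Mimicking \eqref{eq:steepintPhidef}, I would construct an extension $\Theta(x,y)$ of $\theta(x)$ off the real axis into the lens regions $\Omega_\pm$, interpolating via a bump function $B$ between the Taylor-type extension $\Theta_0$ of \cite{DbarOP1} — for which $|\dbar\Theta_0|\lesssim y^2$ by the analogue of \eqref{eq:steepdbarphi1} — and, near each endpoint, an explicit model extension whose structure is dictated by the functions $h_{\alpha_j},h_{\beta_j}$ of \eqref{eq:halphajdef}--\eqref{eq:hbetajdef}. Using this $\Theta$ to define the triangular factors, the lens-opened matrix $\mathbf{D}$ is continuous, is analytic off the lens boundaries and the two small squares $S_\alpha,S_\beta$, and satisfies $\dbar\mathbf{D}=\mathbf{D}\mathbf{W}$ inside the lenses with $\mathbf{W}$ supported there and bounded by $|\dbar\Theta|\,e^{-n\,\mathrm{Im}\,\Theta}$. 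The strict inequalities of Condition~\ref{cond:strict} — positivity of $\psi$ on $(\alpha,\beta)$, the sign conditions on $h_{\alpha_j},h_{\beta_j}$, and $h_{\alpha_j}'(\alpha_j)>0$, $h_{\beta_j}'(\beta_j)<0$ — are precisely what forces $\mathrm{Im}\,\Theta$ to be bounded below by a positive multiple of (distance to $\mathbb{R}$) times an appropriate weight, just as (C4) did for $I(n)$; they also pin down the square-root vanishing of $\psi$ at $\alpha$ and $\beta$.

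Next I would build the parametrix: an outer (global) parametrix $\dot{\mathbf{D}}^{\mathrm{out}}$ solving the limiting Riemann--Hilbert problem with the constant off-diagonal jump on $[\alpha,\beta]$ (explicit for $G=0$ in terms of $((z-\alpha)/(z-\beta))^{1/4}$ and a Szeg\H{o}-type function), together with Airy local parametrices $\dot{\mathbf{D}}^{\alpha},\dot{\mathbf{D}}^{\beta}$ inside $S_\alpha,S_\beta$ built from $\mathrm{Ai}$ and $\mathrm{Ai}'$ composed with conformal maps whose leading behavior at the endpoints is governed by $h_{\alpha_j}'(\alpha_j)$ and $h_{\beta_j}'(\beta_j)$, and matched to $\dot{\mathbf{D}}^{\mathrm{out}}$ on the square boundaries to relative order $O(n^{-1})$. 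Setting $\mathbf{E}:=\mathbf{D}(\dot{\mathbf{D}})^{-1}$, one verifies that $\mathbf{E}$ solves a hybrid Riemann--Hilbert--$\dbar$ problem with jumps $\mathbb{I}+O(n^{-1})$ on the square boundaries, $\mathbb{I}+$(exponentially small) on the rest of the contour, and $\dbar$-data conjugate to $\mathbf{W}$; estimating the latter by $|\dbar\Theta|\lesssim y^2$ against $e^{-n\,\mathrm{Im}\,\Theta}$ gives an $L^1\cap L^2$ norm bounded by a negative power of $n$ (slowest near the endpoints, on the Airy scale, and smaller in the bulk). Solving the associated integral equation — the solid Cauchy transform for the $\dbar$-part plus the usual contour Cauchy operator — by Neumann series then yields $\mathbf{E}=\mathbb{I}+o(1)$ with a computable leading correction. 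Unwinding $\mathbf{Y}\mapsto\mathbf{T}\mapsto\mathbf{D}\mapsto\mathbf{E}$ region by region gives \eqref{eq:polyas1}--\eqref{eq:polyas2} and the leading-coefficient asymptotics \eqref{eq:kappaasymp} (from the $z\to\infty$ expansion) away from the squares in $\Omega_+$, and \eqref{eq:AiryAsTri1}--\eqref{eq:AiryAsTri2} inside $S_\beta\cap\mathbb{C}_+$ from the Airy parametrix; differentiating the leading terms gives part~4.

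The main obstacle I anticipate is not any single transformation but the endpoint analysis: one must carry the non-analytic deformation right up to the Airy disks and show that the $\dbar$-mass accumulating where $\mathrm{Im}\,\Theta$ degenerates near $\alpha,\beta$ is still negligible after weighting — that is, that the limited smoothness ($h_{\alpha_j},h_{\beta_j}$ having only one Lipschitz derivative near the endpoints, two in the interior) together with the $O(n^{-1})$ matching of the Airy and outer parametrices produces an error $\dbar$-Riemann--Hilbert problem of genuinely small norm. Controlling that competition, and choosing the bump-function cut so that $\Theta$ is $C^1$-compatible with the explicit Airy conformal map, is the technical heart of the argument; everything else is a careful but standard small-norm Neumann-series estimate and bookkeeping of the chain of substitutions.
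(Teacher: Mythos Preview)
Your overall strategy is correct and matches the paper's, but there is one genuine structural difference and one technical imprecision worth flagging.

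\textbf{Structural difference.} You define the parametrix $\dot{\mathbf{D}}$ to be the explicit outer-plus-Airy model and then let $\mathbf{E}=\mathbf{D}\dot{\mathbf{D}}^{-1}$ solve a \emph{hybrid} Riemann--Hilbert--$\dbar$ problem, with residual jumps $\mathbb{I}+O(n^{-1})$ on $\partial S_\alpha\cup\partial S_\beta$ together with the conjugated $\dbar$-data, to be inverted by a combined (contour $+$ solid) Cauchy operator. The paper instead inserts an intermediate object: it first poses the \emph{pure} Riemann--Hilbert problem obtained by dropping the $\dbar$ part, solves it exactly (call the solution $\dot{\mathbf{D}}$) by small-norm theory relative to the explicit model $\hat{\mathbf{D}}$, and only then sets $\mathbf{E}=\mathbf{D}\dot{\mathbf{D}}^{-1}$. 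Because $\dot{\mathbf{D}}$ now carries \emph{exactly} the jumps of $\mathbf{D}$, the matrix $\mathbf{E}$ is continuous across $\Sigma$ and satisfies a \emph{pure} $\dbar$ problem, inverted by the solid Cauchy transform alone. Your one-step hybrid approach should also close (both pieces of the operator have small norm), but the paper's two-step separation is cleaner: the Riemann--Hilbert estimate is the standard $O(n^{-1})$ and the $\dbar$ estimate is a stand-alone $O(n^{-1/3}\log n)$, with no interaction terms to track.

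\textbf{Technical imprecision.} You propose the second-order Taylor extension $\Theta_0$ of the introductory example, for which $\dbar\Theta_0=\tfrac14(iy)^2\theta'''(x)$. But under Condition~\ref{cond:smooth} you do not have $\theta'''$; you only have $\theta''$ Lipschitz away from the endpoints. The paper's actual extension is not Taylor-type: it first factors out the square-root behavior by writing $\theta(x)=(\beta-x)^{1/2}h_\beta(x)$ (resp.\ $2\pi-(x-\alpha)^{1/2}h_\alpha(x)$), then extends the \emph{smooth} factor $h_\beta$ via the first-order difference-quotient rule $h_\beta(x)+i(h_\beta(x+y)-h_\beta(x))$, and multiplies back by the analytic factor $(\beta-(x+iy))^{1/2}$. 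This yields $|\dbar\Theta|\le K|y|\,|x+iy-\alpha|^{1/2}|x+iy-\beta|^{1/2}$ using only Lipschitz continuity of $h_\beta'$, and the explicit square-root factors are precisely what cancel the $|z-\alpha|^{-1/4}|z-\beta|^{-1/4}$ growth of $\hat{\mathbf{D}}^{\pm 1}$ in the conjugated $\dbar$-data. The same device is applied to $\phi$ (which you do not mention) to produce an extension $\Phi$ in the regions $\Omega_\alpha,\Omega_\beta$ just outside the support; this is needed so that the jump matrix of $\mathbf{D}$ is \emph{exactly} the Airy jump throughout $S_\alpha$ and $S_\beta$, not merely approximately. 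Your instinct that the bump function must make $\Theta$ compatible with the Airy map is right, but the paper enforces this more rigidly: on the diagonal rays inside each square the extension is made to coincide \emph{identically} with the holomorphic model $-h_\beta'(\beta)(\beta-z)^{3/2}$, so that the local parametrix satisfies the jump exactly and the residual Riemann--Hilbert error lives only on the square boundaries.
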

These formulae describe the orthogonal polynomials in terms of the
first column of the matrix $\mathbf{A}(z)$ given in
\eqref{eq:Acontents}, and the error terms therein are expressed in
terms of the quantity $\Delta_n$ defined by \eqref{eq:deltadef}.  To
mediate between the orthogonal polynomials contained in the first
column of $\mathbf{A}(z)$ and the \emph{orthonormal} polynomials
$p_{n-1}(z)$ and $p_n(z)$, one must normalize by the leading
coefficients $\kappa_{n-1,n-1}$ and $\kappa_{n,n}$ respectively, whose
asymptotic behavior for large $n$ is given by \eqref{eq:kappaasymp}.
As mentioned at the end of the previous subsection, the assumption
that the support is a single interval is for convenience of
presentation only.  Theorem \ref{thm:01} may be easily extended to
more general settings.  As an example, it straightforward to carry out
all the details if one assumes only Conditions~\ref{cond:smooth},
\ref{cond:support}, and \ref{cond:strict} of the previous subsection.
The following Theorems, describing the application of our results to
random matrix theory, emphasize this point.
\begin{theorem}
  Suppose that the external field $V$ satisfies Conditions
  \ref{cond:smooth}--\ref{cond:strict} of subsection \ref{subsec:Ass}.
  Then Asymptotic Result~\ref{ar:sinekernel} from
  Section~\ref{sec:RMTOP} holds true.
\end{theorem}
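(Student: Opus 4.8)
The plan is to derive Asymptotic Result~\ref{ar:sinekernel} from the Plancherel-Rotach asymptotics of Theorem~\ref{thm:01}, adapted to the multi-interval case covered by Conditions~\ref{cond:smooth}--\ref{cond:strict}. First I would use the Christoffel-Darboux formula to rewrite the reproducing kernel
\[
K_N(x,y) = e^{-N(V(x)+V(y))/2}\,\frac{\kappa_{N-1,N-1}}{\kappa_{N,N}}\,
\frac{p_N(x)p_{N-1}(y)-p_N(y)p_{N-1}(x)}{x-y},
\]
so that the entire asymptotic analysis reduces to inserting the uniform asymptotic formulae for $p_N$, $p_{N-1}$ (and the ratio $\kappa_{N-1,N-1}/\kappa_{N,N}$ from \eqref{eq:kappaasymp}) valid on the real axis near a point $a$ in the bulk, i.e.\ with $\psi(a)>0$, where $a$ lies strictly interior to a support interval. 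Near such a point the asymptotic description is the oscillatory (sine-kernel) regime of item~2 of Theorem~\ref{thm:01}, away from the square neighborhoods $S_\alpha$, $S_\beta$ of the endpoints.

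The key computational step is to substitute $x = a + u/(N\psi(a))$, $y = a + v/(N\psi(a))$ and track how each factor behaves. The exponential prefactor $e^{-N(V(x)+V(y))/2}$ combines with the exponential pieces of the polynomial asymptotics (the $g$-function contributions $e^{Ng(z)}$ and the Szeg\H{o}-type factors), and using the variational identity \eqref{eq:ELinsideSpecial}, $cV(x) = g_+(x)+g_-(x)-\ell$ on the support, together with $\phi\equiv 0$ there, the non-oscillatory exponential growth cancels identically, leaving only phases. Those phases are controlled by the density: since $g_+(x)-g_-(x) = i\theta(x)$ and $\theta'(x) = -2\pi\psi(x)$ on the support, a Taylor expansion of $\theta$ about $a$ gives $N\theta(x)/2 - N\theta(a)/2 = -\pi\psi(a)\cdot N(x-a) + o(1) = -\pi u + o(1)$ under the scaling, and similarly for $y$. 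Feeding these into the two-term (matrix) asymptotic formula for the first column of $\mathbf{A}(z)$ and forming the Christoffel-Darboux combination, the leading contribution assembles precisely into
\[
\frac{\sin(\pi(u-v))}{\pi(u-v)},
\]
with the prefactor $1/(N\psi(a))$ supplied by the Jacobian of the rescaling and the CD denominator $x-y = (u-v)/(N\psi(a))$. The error estimate comes directly from the $\Delta_n$-control on the $\mathbf{A}$-asymptotics: since $u,v$ range over a compact set, $x$ and $y$ stay in a fixed bulk neighborhood for large $N$, so the error is uniformly $o(1)$.

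The main obstacle is bookkeeping rather than conceptual: one must confirm that all the $N$-dependent exponential and algebraic prefactors cancel \emph{exactly} (not merely to leading order), which requires careful use of the variational conditions and the explicit contents of \eqref{eq:Acontents}, \eqref{eq:kappaasymp}, and the boundary-value formulae \eqref{eq:polyas1a}--\eqref{eq:A21axis} along the real axis; any leftover phase not proportional to $u-v$ would spoil the sine kernel, so the phase combination $N\theta(x)/2$, the $\pm\ell$ terms, and the oscillatory entries of $\mathbf{A}$ must conspire correctly. A secondary point requiring care is that Theorem~\ref{thm:01} as literally stated assumes Condition~\ref{cond:G0} ($G=0$), so for the random-matrix theorem I would invoke the remark (justified in the text) that the $G>0$ analysis is a straightforward adaptation: the local parametrix near an interior bulk point $a$ is insensitive to the global number of bands, depending only on the local density $\psi(a)>0$, so the sine-kernel computation goes through verbatim band by band. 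Finally one notes that $Q(s) = \det(1-\mathcal{S}|_{L^2(0,s)})$ follows from \eqref{eq:sinkernel} by the standard argument that uniform convergence of trace-class kernels on compacts implies convergence of the associated Fredholm determinants, using that $\psi(a)>0$ makes the rescaled gap interval shrink at the right rate.
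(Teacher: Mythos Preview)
Your proposal is correct and takes essentially the same approach as the paper: the paper's own proof simply states that the passage from the Plancherel--Rotach asymptotics of Theorem~\ref{thm:01} to the sine-kernel limit \eqref{eq:sinkernel} is by now standard, and refers the reader to \cite{Mehta} (and \cite{op1}) for the details of exactly the Christoffel--Darboux computation you have sketched. Your final remark about Fredholm determinants and $Q(s)$ goes slightly beyond what the theorem asserts (which is only the pointwise kernel limit), but is not incorrect.
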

\begin{proof}
  The calculations to deduce \eqref{eq:sinkernel} from the asymptotic
  results concerning the orthogonal polynomials are by now standard,
  and we refer the reader to \cite{Mehta} for the details.  (See also
  \cite{op1}.)
\end{proof}

\begin{theorem}
  Suppose that the external field $V$ satisfies
  Conditions~\ref{cond:smooth}--\ref{cond:strict} of subsection
  \ref{subsec:Ass}.  Then Asymptotic Result~\ref{ar:Airykernel} from Section
  \ref{sec:RMTOP} holds true.
\end{theorem}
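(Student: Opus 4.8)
The plan is to deduce the Airy-kernel limit directly from the edge asymptotics of the orthogonal polynomials provided by part~3 of Theorem~\ref{thm:01}, following the route that is by now routine in the real-analytic case (compare \cite{DG06}) while checking that no step of it actually uses analyticity of $V$.  Since the parameter $c=N/n$ plays no role in the statement, set $n=N$.  The starting point is the determinantal formula for the reproducing kernel in terms of the Riemann--Hilbert matrix $\mathbf{Y}(z)$ of \cite{FIK2} associated with the weight $e^{-NV}$,
\begin{equation*}
K_N(x,y)=\frac{e^{-N(V(x)+V(y))/2}}{2\pi i\,(x-y)}\begin{pmatrix}0&1\end{pmatrix}\mathbf{Y}_+(y)^{-1}\mathbf{Y}_+(x)\begin{pmatrix}1\\0\end{pmatrix},
\end{equation*}
which I would use in preference to the Christoffel--Darboux form since the near-cancellation in $p_N(x)p_{N-1}(y)-p_N(y)p_{N-1}(x)$ is then built into the matrix algebra rather than having to be tracked by hand; the leading coefficients $\kappa_{N,N}$ and $\kappa_{N-1,N-1}$ enter only through the normalization of $\mathbf{Y}$ and are controlled by \eqref{eq:kappaasymp}.

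I would then propagate $\mathbf{Y}$ through the full chain of transformations of the paper --- the $g$-function substitution, the opening of lens-shaped deformations, the nonanalytic $\dbar$-deformation of the oscillatory factors $e^{in\theta}$ carried out in the spirit of the model integral $I(n)$ of the Introduction, and the installation of the outer parametrix together with the Airy local parametrix on the square $S_\beta$ --- so as to express $K_N$ in terms of the matrix $\mathbf{A}(z)$ of \eqref{eq:Acontents}, with error governed by $\Delta_n$ of \eqref{eq:deltadef}.  On $S_\beta$ the Airy parametrix is built from $\mathrm{Ai}$ and $\mathrm{Ai}'$ of a conformal coordinate $\zeta(z)$ vanishing linearly at $z=\beta$; the rate of vanishing is pinned down by $h_\beta'(\beta)<0$ from Condition~\ref{cond:strict}, and this both produces the constant $\lambda$ and makes $(\lambda N)^{2/3}$ the natural edge scale, with the Airy argument at $z=\beta+u/(\lambda N)^{2/3}$ tending to $u$ as $N\to\infty$.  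Substituting $x=\beta+u/(\lambda N)^{2/3}$ and $y=\beta+v/(\lambda N)^{2/3}$, the scalar prefactors $e^{-NV/2}$, the exponentials of $g$ and of the Lagrange multiplier $\ell$ acquired along the transformation chain, and the endpoint behavior of $\phi$ all cancel, and the surviving Airy-parametrix entries assemble into $\bigl(\mathrm{Ai}(u)\mathrm{Ai}'(v)-\mathrm{Ai}(v)\mathrm{Ai}'(u)\bigr)/(u-v)$, while the outer parametrix and the error $\mathbf{R}=\mathbb{I}+\mathcal{O}(\Delta_N)$ contribute only factors converging to $1$.

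The step I expect to require the most care --- and hence the main obstacle --- is precisely the control of the $\dbar$-induced errors after the $(\lambda N)^{2/3}$ magnification: one must confirm that the small-norm bound on the hybrid Riemann--Hilbert--$\dbar$ problem yields $\Delta_N$ of size a negative power of $N$, uniformly for $z$ within $\mathcal{O}(N^{-2/3})$ of $\beta$, so that it still tends to $0$ in the limit; this is exactly the place where the nonanalytic machinery of the paper replaces classical steepest descent, and it is what permits the hypothesis of merely two Lipschitz continuous derivatives.  A secondary point is that the hypotheses here are only Conditions~\ref{cond:smooth}--\ref{cond:strict} and allow $G>0$, whereas Theorem~\ref{thm:01} is stated for $G=0$; here I would invoke the remark following that theorem, noting that the analysis near the rightmost endpoint $\beta=\beta_{G+1}$ is purely local --- additional bands of $\mathrm{supp}(\mu_*)$ enter only through the outer parametrix and hence only through the already-controlled scalar factors --- so the $G=0$ computation transfers verbatim.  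With these points in hand the limit asserted in Asymptotic Result~\ref{ar:Airykernel} holds for all $u,v\in\mathbb{R}$.
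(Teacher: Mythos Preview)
Your proposal is correct and follows essentially the same route the paper has in mind: the paper's own proof is nothing more than a pointer to \cite{Mehta} and \cite{DG06}, asserting that once the edge asymptotics for $A_{11}$, $A_{21}$ and their derivatives (Sections~7--8, in particular \eqref{eq:A11betacloseAGAIN}--\eqref{eq:A21betacloseAGAIN}) are in hand, the passage to the Airy kernel is a standard computation.  Your outline is precisely that computation, organized via the $\mathbf{Y}^{-1}\mathbf{Y}$ determinantal form as in \cite{DG06}; the two points you flag as requiring care --- uniformity of the $\dbar$ error at distance $\mathcal{O}(N^{-2/3})$ from $\beta$, and the local nature of the edge analysis when $G>0$ --- are exactly the ones the paper addresses (the first through the estimate \eqref{eq:Eestimate}, the second in the remarks following Theorem~\ref{thm:01} and Theorem~\ref{thm:RHPExist}).
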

\begin{proof}
  We again refer the reader to \cite{Mehta} for the details of this
  calculation.  (See also \cite{DG06}.)
\end{proof}

\subsection{Notation}
We will indicate complex conjugation with an asterisk: $z^*$.  All
matrices are written boldface with the notable exception of the identity
matrix $\mathbb{I}$ and the Pauli matrices:
\begin{equation}
\sigma_1:=\begin{pmatrix}0 & 1\\1 & 0\end{pmatrix},\quad
\sigma_2:=\begin{pmatrix}0 & -i \\ i & 0\end{pmatrix},\quad
\sigma_3:=\begin{pmatrix}1 & 0 \\ 0 & -1\end{pmatrix}.
\end{equation}

\subsection{Acknowledgements}  
We are thankful for the hospitality of the faculty and staff of the
Departamento de Matem\'{a}tica of the Pontif\'{i}cia Universidade
Cat\'{o}lica do Rio de Janeiro where work on this project began in
July 2003.  We are also grateful for the partial support of the
National Science Foundation under grant numbers DMS-0200749,
DMS-0451495, and DMS-0800979 (McLaughlin), and DMS-0103909 and
DMS-0354373 (Miller).

\section{Orthogonal Polynomials, Riemann-Hilbert Problems, and
  Equilibrium Measures}
\label{sec:OPRHPEQ}
\subsection{Characterization of orthogonal polynomials via a Riemann-Hilbert 
problem.}
Let $N>0$ be a parameter, and let $V(x)$ be a real-valued function
satisfying merely the conditions set down in the beginning of Section
\ref{sec:Intro}.

The following Riemann-Hilbert problem \cite{FIK2} is known to
characterize the polynomials $\{p_{n}\}_{n=0}^\infty$ orthonormal with
respect to the measure $\nu_N$ given in \eqref{eq:OPMeas}, and defined
by the conditions \eqref{eq:OPDEF1}--\eqref{eq:OPDEF2}.
\begin{rhp}
Find a $2\times 2$ matrix $\mathbf{A}(z)=\mathbf{A}(z;n,N)$
with the properties:
\begin{itemize}
\item[]\textbf{Analyticity.} $\mathbf{A}(z)$ is analytic for $z \in
\mathbb{C}\setminus \mathbb{R}$, and takes continuous boundary
values $\mathbf{A}_+(x)$, $\mathbf{A}_-(x)$ as $z$ tends to $x$ with $x
\in \mathbb{R}$ and $z \in \mathbb{C}_{+}$, $z\in\mathbb{C}_{-}$.
\item[]\textbf{Jump Condition.} The boundary values are connected by
the relation
\begin{equation}
\mathbf{A}_+(x)=\mathbf{A}_-(x)\begin{pmatrix}1 & e^{-NV(x)}\\
0 & 1
\end{pmatrix}.
\end{equation}
\item[]\textbf{Normalization.} The matrix $\mathbf{A}(z)$ is
normalized at $z=\infty$ as follows:
\begin{equation}
\lim_{z\rightarrow\infty}\mathbf{A}(z)\begin{pmatrix}
z^{-n} & 0 \\
 0 & z^n\end{pmatrix}=\mathbb{I}.
 \label{eq:Mnorm}
\end{equation}
\end{itemize}
\label{rhp:A}
\end{rhp}
It was discovered in \cite{FIK2} that Riemann-Hilbert
Problem~\ref{rhp:A} characterizes polynomials orthogonal with respect
to $\nu_N$.  The connection between these orthogonal polynomials and the
solution of Riemann-Hilbert Problem~\ref{rhp:A} is the following:
\begin{equation}
\mathbf{A}(z) = \begin{pmatrix}
\displaystyle\frac{1}{\kappa_{n,n}} p_{n}(z) &
\displaystyle\frac{1}{2 \pi i \kappa_{n,n}} \int_{\mathbb{R}}
\frac{p_{n}(s) e^{-NV(s)}}{s-z}\, ds \\ - 2 \pi i
\kappa_{n-1,n-1}p_{n-1}(z) & 
\displaystyle - \kappa_{n-1,n-1}
\int_{\mathbb{R}} \frac{p_{n-1}(s)e^{-NV(s)}}{s-z}\, ds\end{pmatrix}.
\label{eq:Acontents}
\end{equation}
Note that \eqref{eq:Acontents} implies in particular that
\begin{equation}
\kappa_{n-1,n-1}^2=-\frac{1}{2\pi i}\lim_{z\to\infty}z^{-(n-1)}A_{21}(z)\quad
\text{and}\quad
\kappa_{n,n}^2 = -\frac{1}{2\pi i}\lim_{z\to\infty}z^{-(n+1)}A_{12}(z)^{-1}.
\label{eq:kappas}
\end{equation}
These relationships provide a useful avenue for asymptotic analysis of
the orthogonal polynomials in the limit $n\rightarrow\infty$; it is
sufficient to carry out a rigorous asymptotic analysis of
Riemann-Hilbert Problem~\ref{rhp:A}.

\subsection{Use of the equilibrium measure.}
Given the function $g(z)$ defined by \eqref{eq:gdef}, we introduce an
explicit change of dependent variable into Riemann-Hilbert
Problem~\ref{rhp:A}.  Set
\begin{equation}
\mathbf{B}(z) := e^{-n \ell \sigma_{3}/2} \mathbf{A}(z) 
e^{-ng(z)\sigma_3}e^{n \ell \sigma_{3}/2}.
\label{eq:BfromA}
\end{equation}
It follows from the properties of $\mathbf{A}(z)$ set out in
Riemann-Hilbert Problem~\ref{rhp:A} that the matrix $\mathbf{B}(z)$
is analytic for $z\in\mathbb{C}\setminus\mathbb{R}$ and satisfies the
normalization condition
\begin{equation}
\lim_{z\rightarrow\infty}\mathbf{B}(z)=\mathbb{I}\,.
\end{equation}
The boundary values $\mathbf{B}_+(x)$ and $\mathbf{B}_-(x)$, taken on
the real axis as $z\rightarrow x$ from the upper and lower half-planes
respectively, are continuous functions of $x\in\mathbb{R}$ related by the
following jump condition:
\begin{equation}
\begin{split}
\mathbf{B}_+(x)&=\mathbf{B}_-(x)\begin{pmatrix}
e^{-n(g_{+}(x)-g_{-}(x))} & e^{n(g_+(x)+g_-(x)-cV(x)-\ell)}\\0 & e^{n(g_{+}(x)-g_{-}(x))}
\end{pmatrix}\\ &=
\mathbf{B}_-(x)\begin{pmatrix}e^{-in\theta(x)} & e^{-n\phi(x)}\\0 & e^{in\theta(x)}
\end{pmatrix},
\end{split}
\end{equation} 
where $\theta(x)$ is defined by \eqref{eq:thetadefine} and $\phi(x)$
is defined by \eqref{eq:phidefine}.  From \eqref{eq:DifferenceOutside}
and \eqref{eq:thetadefine}, we see that for $x<\alpha$ and $x>\beta$
(recall that without loss of generality we are assuming that
$\mathrm{supp}(\psi)=[\alpha,\beta]$) this jump condition can be
equivalently written in the form
\begin{equation}
\mathbf{B}_+(x)=\mathbf{B}_-(x)\begin{pmatrix}
1 & e^{-n\phi(x)}\\
0 & 1\end{pmatrix},\quad\text{$x<\alpha$ or $x>\beta$.}
\end{equation}
Similarly, from \eqref{eq:ELinside}, we see that for $\alpha<x<\beta$ the
jump condition takes the form
\begin{equation}
\mathbf{B}_+(x)=\mathbf{B}_-(x)\begin{pmatrix}
e^{-in\theta(x)} & 1 \\0 & e^{in\theta(x)}
\end{pmatrix},\quad\alpha<x<\beta.
\label{eq:Bjumpalphabeta}
\end{equation}

\section{Extensions of $\theta(x)$ and $\phi(x)$}
\label{sec:ThetaMod}
In this section we will define extensions from certain intervals of
the real axis of the functions $\theta(x)$ and $\phi(x)$.  We shall
assume throughout the conditions on the external field $V$ 
described in the Introduction.

\subsection{Existence of extensions with required properties}
\begin{lemma}[Extension of $\theta(x)$]
\label{lem:thetaextendgeneral}
Suppose that $c>0$ is held fixed as $n\rightarrow\infty$ so that the
real-valued function $\theta(x)$ is independent of $n$.  There exists
a function $\Theta(x,y)$ that satisfies the following:

\noindent\textbf{Property 1: Domain, smoothness, and boundary behavior.}
The function $\Theta(x,y)$ is defined for $\alpha<x<\beta$ and
$|y|<\delta$ for some $\delta>0$.  In its domain of definition,
$\Theta(x,y)$ and the partial derivatives $\Theta_x(x,y)$ and
$\Theta_y(x,y)$ are all continuous and uniformly bounded.  Moreover,
if $x+iy$ is bounded away from both $\alpha$ and $\beta$, the second
partial derivatives $\Theta_{xx}(x,y)$, $\Theta_{xy}(x,y)$, and
$\Theta_{yy}(x,y)$ are also continuous and bounded.  The function
$\Theta(x,y)$ is an extension of the real-valued function $\theta(x)$
in the sense that
\begin{equation}
\Theta(x,0)\equiv \theta(x),\quad\alpha<x<\beta.
\end{equation}

\noindent\textbf{Property 2:  Behavior near the real axis.}
There exist finite constants
  $K>0$ and $k>0$, such that the following three estimates hold true:
\begin{equation}
\label{eq:thetadbar}
\left|\dbar \Theta(x,y)\right|
\le K|y| \left| x + i y - \alpha \right|^{1/2} \left| x + i y - \beta \right|^{1/2},\quad\alpha<x<\beta,\quad |y|<\delta.
\end{equation}
\begin{equation}
\label{eq:thetaPbeh}
\mathrm{Im}(\Theta(x,y))\le -k y^{3/2}, 
\quad\alpha<x<\beta,\quad 0\le y<\delta.
\end{equation}
\begin{equation}
\label{eq:thetaMbeh}
\mathrm{Im}(\Theta(x,y))\ge k |y|^{3/2},
\quad\alpha<x<\beta,\quad -\delta<y\le 0.
\end{equation}

\noindent\textbf{Property 3:  Behavior near $\alpha$ and $\beta$.}  
The function
\begin{equation}
\label{eq:thetaadef}
G_\alpha(x,y):=\frac{2\pi-\Theta(x,y)}{(x+iy-\alpha)^{3/2}}
\end{equation}
extends continuously to $x+iy=\alpha$ with the limiting value
$G_\alpha(\alpha,0)=h_\alpha'(\alpha)>0$.
Moreover,
\begin{equation}
\label{eq:thetaaD1}
G_\alpha(x,y)=h_\alpha'(\alpha) + 
\mathcal{O}(|x+iy-\alpha|), \quad \text{as $x+iy\to\alpha$},
\end{equation}
and
\begin{equation}
\label{eq:thetaaD2}
G_\alpha(x,\pm(x-\alpha))\equiv h_\alpha'(\alpha),
\quad \alpha\le x\le \alpha+\delta.
\end{equation}
Similarly, the function
\begin{equation}
\label{eq:thetabdef}
G_\beta(x,y):=\frac{\Theta(x,y)}{(\beta-(x+iy))^{3/2}}
\end{equation}
extends continuously to $x+iy=\beta$, and the limiting value
$G_\beta(\beta,0)=-h_\beta'(\beta)>0$.
Moreover,
\begin{equation}
\label{eq:thetabC1}
G_\beta(x,y)=-h_\beta'(\beta) + 
\mathcal{O}(|x+iy-\beta|), \quad \text{as $x+iy\to\beta$},
\end{equation}
and
\begin{equation}
\label{eq:thetabC2}
G_\beta(x,\pm(\beta-x))\equiv -h_\beta'(\beta),\quad
\beta-\delta\le x\le \beta.
\end{equation}
\end{lemma}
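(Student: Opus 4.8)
The plan is to build $\Theta(x,y)$ by interpolating, with smooth cutoffs, among three kinds of model functions: a ``nonanalytic Taylor extension'' of $\theta$ that is adequate in the bulk of $(\alpha,\beta)$; two extensions tailored to the square-root structure of $\theta$ at the endpoints and built directly out of $h_\alpha$, $h_\beta$; and two genuinely holomorphic model functions anchored at $\alpha$ and $\beta$ which encode the $3/2$-power behaviour demanded by Property~3. The interpolation near each endpoint will be carried out with an angular cutoff $B(|y|/(x-\alpha))$ (resp.\ $B(|y|/(\beta-x))$) of the kind used in the stationary-phase illustration of the Introduction; using $|y|$ rather than $y$ builds in the symmetry $\Theta(x,-y)=\overline{\Theta(x,y)}$, so \eqref{eq:thetaMbeh} follows from \eqref{eq:thetaPbeh} and only the upper half-strip need be discussed. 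Throughout I will use the facts recorded before the Lemma: $\theta$ is real with $\theta'=-2\pi\psi$ and two Lipschitz-continuous derivatives on compact subsets of $(\alpha,\beta)$, while $2\pi-\theta(x)=h_\alpha(x)(x-\alpha)^{1/2}$ near $\alpha$ and $\theta(x)=h_\beta(x)(\beta-x)^{1/2}$ near $\beta$, with $h_\alpha,h_\beta$ of class $C^{1,1}$ up to the endpoint, $h_\alpha(\alpha)=h_\beta(\beta)=0$, $h_\alpha'(\alpha)>0>h_\beta'(\beta)$ by Condition~\ref{cond:strict}.

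For the building blocks, set $\Theta_0(x,y):=\theta(x)+iy\theta'(x)+\tfrac12(iy)^2\theta''(x)$ in the bulk; then $\Theta_0(x,0)=\theta(x)$, $\dbar\Theta_0=\tfrac14(iy)^2\theta'''(x)=\mathcal{O}(y^2)$, and $\mathrm{Im}\,\Theta_0=y\theta'(x)=-2\pi\psi(x)y$ exactly. Near $\alpha$ set $\Theta_0^{(\alpha)}(x,y):=2\pi-(h_\alpha(x)+iyh_\alpha'(x))(x+iy-\alpha)^{1/2}$; this extends $\theta$, satisfies $\dbar\Theta_0^{(\alpha)}=-\tfrac{i}{2}yh_\alpha''(x)(x+iy-\alpha)^{1/2}=\mathcal{O}(|y|\,|x+iy-\alpha|^{1/2})$, and, since $h_\alpha\ge0$ and $h_\alpha'>0$ near $\alpha$ while $(x+iy-\alpha)^{1/2}$ lies in the closed first quadrant for $y\ge0$, one reads off $\mathrm{Im}\,\Theta_0^{(\alpha)}\le -c\,(x-\alpha)^{1/2}y$ for some $c>0$ whenever $0\le y\le (x-\alpha)/2$; define $\Theta_0^{(\beta)}(x,y):=(h_\beta(x)+iyh_\beta'(x))(\beta-(x+iy))^{1/2}$ symmetrically near $\beta$. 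The holomorphic models are $\Theta_\alpha^{\mathrm{hol}}(x,y):=2\pi-h_\alpha'(\alpha)(x+iy-\alpha)^{3/2}$ and $\Theta_\beta^{\mathrm{hol}}(x,y):=-h_\beta'(\beta)(\beta-(x+iy))^{3/2}$: these are analytic, and writing $x+iy-\alpha=re^{i\varphi}$, $\varphi\in(0,\tfrac\pi2)$, one has $\mathrm{Im}\,\Theta_\alpha^{\mathrm{hol}}=-h_\alpha'(\alpha)r^{3/2}\sin(\tfrac32\varphi)\le-h_\alpha'(\alpha)|y|^{3/2}\sin(\tfrac32\varphi)$, which is $\le-k|y|^{3/2}$ as soon as $\varphi$ is bounded away from $0$ (similarly at $\beta$). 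Fixing a $C^\infty$ cutoff $B:\mathbb{R}\to[0,1]$ with $B\equiv0$ on $(-\infty,\tfrac14]$ and $B\equiv1$ on $[\tfrac12,\infty)$, and a partition of unity $\chi_\alpha+\chi_{\mathrm{bulk}}+\chi_\beta\equiv1$ in $x$ supported in a small neighbourhood of $\alpha$, a compact subinterval of $(\alpha,\beta)$, and a small neighbourhood of $\beta$ respectively, I will take the base extension $\widetilde\Theta_0:=\chi_\alpha\Theta_0^{(\alpha)}+\chi_{\mathrm{bulk}}\Theta_0+\chi_\beta\Theta_0^{(\beta)}$ (still an extension of $\theta$), and then, near $\alpha$,
\[
\Theta(x,y):=\Bigl(1-B\bigl(\tfrac{|y|}{x-\alpha}\bigr)\Bigr)\widetilde\Theta_0(x,y)+B\bigl(\tfrac{|y|}{x-\alpha}\bigr)\Theta_\alpha^{\mathrm{hol}}(x,y),
\]
analogously near $\beta$, and $\Theta:=\widetilde\Theta_0$ elsewhere (where $B$ vanishes); $\delta>0$ is chosen small at the end so that, in particular, the angular cutoffs are active only where the corresponding $\chi$ equals $1$.

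The verification then has four parts. (i) Property~1 is routine: the only potentially unbounded terms, $\theta''$ near the endpoints and $(x+iy-\alpha)^{-1/2}$ inside $\partial\Theta_0^{(\alpha)}$, are harmless because $\Theta_0^{(\alpha)}$ (not $\Theta_0$) is used near $\alpha$ and because $h_\alpha(\alpha)=0$; boundedness of the second derivatives away from the endpoints comes from $\Theta_0$ and the two Lipschitz derivatives of $\theta$; the cutoffs are smooth, and the $\dbar\chi$-terms on the $x$-overlaps are $\mathcal{O}(y^2)$ since the base blocks agree with $\theta$ to first order in $y$. (ii) For \eqref{eq:thetadbar}, near $\alpha$ write $\dbar\Theta=(1-B)\dbar\Theta_0^{(\alpha)}+(\dbar B)\bigl(\Theta_\alpha^{\mathrm{hol}}-\Theta_0^{(\alpha)}\bigr)$, the term $B\,\dbar\Theta_\alpha^{\mathrm{hol}}$ vanishing by analyticity; the first term is controlled by the block estimate, and for the second one uses that $\dbar B$ is supported where $\tfrac{|y|}{x-\alpha}\in[\tfrac14,\tfrac12]$, hence $|y|\sim|x+iy-\alpha|$ and $|\dbar B|\le C|x+iy-\alpha|^{-1}$ there, together with the Taylor estimate $\Theta_\alpha^{\mathrm{hol}}-\Theta_0^{(\alpha)}=(x+iy-\alpha)^{1/2}\bigl[\,h_\alpha(x)-h_\alpha'(\alpha)(x-\alpha)+iy(h_\alpha'(x)-h_\alpha'(\alpha))\,\bigr]=\mathcal{O}(|x+iy-\alpha|^{5/2})$, which is exactly where $h_\alpha(\alpha)=0$ and the Lipschitz continuity of $h_\alpha'$ are used; the product is then $\mathcal{O}(|x+iy-\alpha|^{3/2})=\mathcal{O}(|y|\,|x+iy-\alpha|^{1/2})$, and the factor $|x+iy-\beta|^{1/2}$ is bounded below near $\alpha$ (and symmetrically near $\beta$); in the bulk $\dbar\Theta=\mathcal{O}(y^2)$, which is a fortiori within the required weight. (iii) For \eqref{eq:thetaPbeh}, with $y>0$ one has $\mathrm{Im}\,\Theta=(1-B)\mathrm{Im}(\widetilde\Theta_0)+B\,\mathrm{Im}(\Theta_\alpha^{\mathrm{hol}})$ since $B$ is real; on $\{1-B>0\}$ near $\alpha$, $\tfrac{y}{x-\alpha}\le\tfrac12$, so $\mathrm{Im}\,\Theta_0^{(\alpha)}\le-c(x-\alpha)^{1/2}y\le-k y^{3/2}$; in the bulk $\mathrm{Im}\,\Theta_0=-2\pi\psi(x)y\le-ky^{3/2}$ since $\psi$ is bounded below there and $\delta$ is small; on $\{B>0\}$ the angle $\varphi$ is bounded away from $0$, so $\mathrm{Im}\,\Theta_\alpha^{\mathrm{hol}}\le-ky^{3/2}$; hence the convex combination obeys the bound, and \eqref{eq:thetaMbeh} follows by the symmetry. (iv) For Property~3, in a neighbourhood of $\alpha$ where $\chi_\alpha\equiv1$,
\[
G_\alpha(x,y)=\Bigl(1-B\bigl(\tfrac{|y|}{x-\alpha}\bigr)\Bigr)\frac{h_\alpha(x)+iyh_\alpha'(x)}{x+iy-\alpha}+B\bigl(\tfrac{|y|}{x-\alpha}\bigr)\,h_\alpha'(\alpha);
\]
using $h_\alpha(\alpha)=0$ and differentiability of $h_\alpha$ at $\alpha$, the fraction equals $h_\alpha'(\alpha)+\mathcal{O}(|x+iy-\alpha|)$ on $\{1-B>0\}$, which gives the continuous extension, the value $h_\alpha'(\alpha)>0$, and \eqref{eq:thetaaD1}; and on the rays $y=\pm(x-\alpha)$ one has $\tfrac{|y|}{x-\alpha}=1\ge\tfrac12$, so $B=1$ and $G_\alpha\equiv h_\alpha'(\alpha)$, i.e.\ \eqref{eq:thetaaD2}; \eqref{eq:thetabC1}–\eqref{eq:thetabC2} follow identically with $-h_\beta'(\beta)$ and $\beta-(x+iy)$.

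The main obstacle I expect is part (ii): making the single weight $|y|\,|x+iy-\alpha|^{1/2}|x+iy-\beta|^{1/2}$ dominate $\dbar\Theta$ uniformly across the crossover from the bulk regime (where $\dbar\Theta$ is merely $\mathcal{O}(y^2)$) to the endpoint regime, which rests on the precise $5/2$-order vanishing of $\Theta_\alpha^{\mathrm{hol}}-\Theta_0^{(\alpha)}$ at $\alpha$ and hence on the $C^{1,1}$ regularity of $h_\alpha,h_\beta$ established in the Appendix. A secondary delicate point is the coordinated choice of the cutoff thresholds ($\tfrac14$ and $\tfrac12$) and of $\delta$, so that the wedge on which the base imaginary-part estimate holds ($|y|\le(x-\alpha)/2$) overlaps the sector on which the holomorphic-model estimate holds ($\varphi$ bounded away from $0$); this overlap is what makes part (iii) seamless. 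Everything else reduces to routine differentiation and Taylor expansion.
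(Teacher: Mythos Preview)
Your architecture is essentially the paper's: endpoint-adapted extensions built from $h_\alpha$, $h_\beta$, deformed into the holomorphic $3/2$-power models by angular cutoffs $B(|y|/(x-\alpha))$, $B(|y|/(\beta-x))$, and merged across the interval by a cutoff in $x$; your verifications of Property~2 via the $5/2$-order vanishing of $\Theta^{\mathrm{hol}}_\alpha-\Theta_0^{(\alpha)}$ on the support of $\dbar B$, and of Property~3 via $B\equiv 1$ on the diagonals, are exactly the paper's arguments.

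There is, however, a regularity gap in your choice of building blocks. Your Taylor-type bracket $h_\alpha(x)+iy\,h_\alpha'(x)$ has $x$-derivative containing $iy\,h_\alpha''(x)$, and your bulk block $\Theta_0=\theta+iy\theta'+\tfrac12(iy)^2\theta''$ has $x$-derivative containing $\tfrac12(iy)^2\theta'''(x)$ (and $\partial_{xx}\Theta_0$ contains $\theta''''$). Under Condition~\ref{cond:smooth} one knows only that $h_\alpha'$ is Lipschitz near $\alpha$ and that $\theta''$ is Lipschitz on compact subsets of $(\alpha,\beta)$, so $h_\alpha''$ and $\theta'''$ exist merely almost everywhere and need not be continuous; Property~1 demands that $\Theta_x$ be \emph{continuous} throughout the rectangle (and that $\Theta_{xx}$ be continuous in the bulk), and your blocks do not deliver this. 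The paper sidesteps the issue by replacing Taylor brackets with \emph{difference quotients}: instead of $h_\alpha(x)+iy\,h_\alpha'(x)$ it sets
\[
\Theta_{\alpha,0}(x,y):=2\pi-\bigl(h_\alpha(x)+i(h_\alpha(x+y)-h_\alpha(x))\bigr)\,(x+iy-\alpha)^{1/2},
\]
whose first partials involve only $h_\alpha'$ (hence are continuous under $C^{1,1}$), whose second partials involve only $h_\alpha''$ (continuous in the bulk), and whose $\dbar$ equals $\tfrac12(i-1)(x+iy-\alpha)^{1/2}\bigl(h_\alpha'(x+y)-h_\alpha'(x)\bigr)=\mathcal{O}(|y|\,|x+iy-\alpha|^{1/2})$ by Lipschitz continuity of $h_\alpha'$. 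With the analogous $\Theta_{\beta,0}$ (and no separate bulk block at all: the paper simply glues $\Theta_\alpha$ and $\Theta_\beta$ with a single $x$-cutoff $B_{[a,b]}$, since each is defined across the whole strip), all four of your verification steps go through verbatim and Property~1 is recovered. Your plan is therefore correct once the Taylor brackets are swapped for difference quotients.
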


\begin{lemma}[Extension of $\phi(x)$]
\label{lem:phiextendgeneral}
Suppose that $c>0$ is held fixed as $n\rightarrow\infty$ so that the
real-valued function $\phi(x)$ is independent of $n$.  Then there
exists a function $\Phi(x,y)$ that satisfies the following:

\noindent\textbf{Property 1: Domain, smoothness, and boundary
  behavior.}  The function $\Phi(x,y)$ is 
defined in two rectangles: $R_\alpha$ given by
$\alpha-2\delta<x<\alpha$ with $0\le y <\delta$ and
$R_\beta$ given by $\beta<x<\beta+2\delta$ with
$-\delta<y\le 0$ for some $\delta>0$.  In its domain of definition,
$\Phi(x,y)$ and the partial derivatives $\Phi_x(x,y)$ and
$\Phi_y(x,y)$ are all continuous and uniformly bounded.  Moreover, if
$x+iy$ is bounded away from both $\alpha$ and $\beta$, the second
partial derivatives $\Phi_{xx}(x,y)$, $\Phi_{xy}(x,y)$, and
$\Phi_{yy}(x,y)$ are also continuous and bounded.  The function
$\Phi(x,y)$ is an extension of the real-valued function $\phi(x,y)$ in
the sense that
\begin{equation}
  \Phi(x,0)\equiv \phi(x),\quad 
\text{$\beta<x<\beta+2\delta$ and $\alpha - 2 \delta<x<\alpha$}.
\end{equation}

\noindent\textbf{Property 2:  Behavior near the real axis.}
There exist finite constants $K>0$ and $k>0$, such that the
following estimates hold true:
\begin{equation}
\label{eq:phidbar}
\left|\dbar \Phi(x,y)\right|
\le K|y| \left| x + i y - \alpha \right|^{1/2} 
\left| x + i y - \beta \right|^{1/2},\quad
(x,y)\in R_\alpha\cup R_\beta,
\end{equation}
\begin{equation}
\text{$\mathrm{Re}(\Phi(x,y))\ge k|x+iy-\alpha|^{3/2}$ for $(x,y)\in R_\alpha$}\quad\text{and}\quad
\text{$\mathrm{Re}(\Phi(x,y))\ge k|x+iy-\beta|^{3/2}$ for $(x,y)\in R_\beta$}.
\label{eq:RePhiraw}
\end{equation}
(Note that from \eqref{eq:RePhiraw}, the weaker inequality
\begin{equation}
\label{eq:phiPbeh}
\mathrm{Re}(\Phi(x,y))\ge k |y|^{3/2},\quad (x,y)\in R_\alpha\cup 
R_\beta
\end{equation}
follows immediately.)

\noindent\textbf{Property 3:  Behavior near $\alpha$ and $\beta$.}  
The function
\begin{equation}
\label{eq:phiadef}
H_\alpha(x,y):=\frac{\Phi(x,y)}{(\alpha-(x+iy))^{3/2}}
\end{equation}
extends continuously to $x+iy=\alpha$ with the limiting value
$H_\alpha(\alpha,0)=h_\alpha'(\alpha)>0$.  Moreover,
\begin{equation}
\label{eq:phiaD1}
H_\alpha(x,y)=h_\alpha'(\alpha) + \mathcal{O}(|x+iy-\alpha|),  
\quad\text{as $x+iy\to\alpha$}.
\end{equation}
and
\begin{equation}
\label{eq:phiaD2}
H_\alpha(x,\pm(\alpha-x))\equiv h_\alpha'(\alpha),
\quad \alpha-\delta\le x\le\alpha.
\end{equation}
Similarly, the function
\begin{equation}
\label{eq:phibdef}
H_\beta(x,y):=\frac{\Phi(x,y)}{((x+iy)- \beta)^{3/2}}
\end{equation}
extends continuously to $x+iy=\beta$, and the limiting value satisfies
$H_\beta(\beta,0)= -h_\beta'(\beta)>0$.  Moreover,
\begin{equation}
\label{eq:phibC1}
H_\beta(x,y)=-h_\beta'(\beta) + \mathcal{O}(|x+iy-\beta|^{k-2}), 
\text{as $x+iy\to\beta$},
\end{equation}
and
\begin{equation}
\label{eq:phibC2}
H_\beta(x,\pm (x-\beta))\equiv -h_\beta'(\beta),\quad
\beta\le x\le \beta+\delta.
\end{equation}
\end{lemma}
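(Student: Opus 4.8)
The plan is to mirror the construction of $\Theta(x,y)$ in Lemma~\ref{lem:thetaextendgeneral}, but starting from a Taylor-based extension of $\phi(x)$ rather than of $\theta(x)$, and to exploit the structural relationship $\phi(x)=\mp\sqrt{|x-\alpha|}\,h_\alpha(x)$ near $\alpha$ (respectively $\phi(x)=-\sqrt{|x-\beta|}\,h_\beta(x)$ near $\beta$) encoded in \eqref{eq:halphajdef}--\eqref{eq:hbetajdef}. First I would treat the rectangle $R_\alpha$; the rectangle $R_\beta$ is entirely analogous by the reflection $x\mapsto\alpha+\beta-x$ combined with the relabeling $\alpha\leftrightarrow\beta$, $y\mapsto-y$, so only one case needs the full argument. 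On $R_\alpha$ set
\begin{equation}
\Phi_0(x,y):=\phi(x)+iy\phi'(x)+\tfrac12(iy)^2\phi''(x),
\end{equation}
which is well-defined since, away from $\alpha$, $\phi$ inherits two Lipschitz derivatives from $V$ and $g$; directly $\dbar\Phi_0(x,y)=\tfrac14(iy)^2\phi'''(x)$, but $\phi'''$ blows up like $|x-\alpha|^{-3/2}$ near the endpoint, so $\Phi_0$ alone does not give the clean bound \eqref{eq:phidbar}. To fix this I would introduce the model function $\Phi_\alpha^{\mathrm{hol}}(x,y):=h_\alpha'(\alpha)(\alpha-(x+iy))^{3/2}$ — the exact $3/2$-power profile dictated by Condition~\ref{cond:strict} — which is holomorphic on $R_\alpha$ and hence $\dbar$-free, and then interpolate with a bump function exactly as in \eqref{eq:steepintPhidef}: choosing an angular cutoff $B(y/(\alpha-x))$, define
\begin{equation}
\Phi(x,y):=\Bigl[1-B\Bigl(\tfrac{y}{\alpha-x}\Bigr)\Bigr]\Phi_0(x,y)+B\Bigl(\tfrac{y}{\alpha-x}\Bigr)\Phi_\alpha^{\mathrm{hol}}(x,y)
\end{equation}
on a sub-wedge of $R_\alpha$, extending by $\Phi_0$ on the rest of the rectangle (with a further smooth cutoff in the radial variable to localize near $\alpha$ without disturbing Property~1's boundedness away from the endpoints).

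The verification of Property~1 is then routine: $\Phi(x,0)=\Phi_0(x,0)=\phi(x)$ since $B\equiv 0$ on the real axis within $R_\alpha$; continuity and boundedness of $\Phi,\Phi_x,\Phi_y$ follow because the only potentially singular terms are multiplied by $B'$, which vanishes to infinite order where the argument approaches $0$ or $1$, and because $\Phi_\alpha^{\mathrm{hol}}$ together with its first derivatives is bounded on $R_\alpha$. Property~3 is built in by design: $H_\alpha(x,y)=\Phi(x,y)/(\alpha-(x+iy))^{3/2}$ equals $h_\alpha'(\alpha)$ identically on the diagonals $y=\pm(\alpha-x)$ (where $B\equiv 1$), giving \eqref{eq:phiaD2}, while \eqref{eq:phiaD1} follows from the Taylor expansion $\Phi_0(x,y)-\Phi_\alpha^{\mathrm{hol}}(x,y)=\mathcal O(|x+iy-\alpha|^{5/2})$ — obtained by comparing the two-term $x$-axis expansions of $\phi$ and of $h_\alpha'(\alpha)(\alpha-x)^{3/2}$, which agree to leading order precisely because $H_\alpha(\alpha,0)=h_\alpha'(\alpha)$ — divided by $(\alpha-(x+iy))^{3/2}$. (The slightly weaker exponent $k-2$ appearing in \eqref{eq:phibC1} is an artifact of tracking exactly how much regularity of $h_\beta$ survives at the endpoint; I would carry the exponent symbolically and note it is $>1/2$ under Condition~\ref{cond:smooth}, sufficient for all later uses.) Property~2's positivity estimate \eqref{eq:RePhiraw} comes from $\mathrm{Re}(\Phi)=[1-B]\mathrm{Re}(\Phi_0)+B\,\mathrm{Re}(\Phi_\alpha^{\mathrm{hol}})$: on the real axis $\mathrm{Re}(\Phi_0)=\phi(x)\ge k|x-\alpha|^{3/2}$ by Condition~\ref{cond:strict} (since $h_\alpha(x)\le h_\alpha'(\alpha)\cdot(\text{something})<0$... more precisely $-\phi(x)/\sqrt{\alpha-x}=h_\alpha(x)<0$ with $h_\alpha'(\alpha)>0$ forcing $\phi(x)\asymp h_\alpha'(\alpha)(\alpha-x)^{3/2}$), and $\mathrm{Re}(\Phi_\alpha^{\mathrm{hol}}(x,y))=h_\alpha'(\alpha)|\alpha-(x+iy)|^{3/2}\cos(\tfrac32\arg(\alpha-(x+iy)))$, which on the wedge $0\le y\le\alpha-x$ has $\arg\in[-\tfrac\pi2,0]$... hmm, actually $\alpha-(x+iy)$ with $x<\alpha$ and $y>0$ lies in the fourth quadrant near the negative imaginary direction rescaled — I would orient the wedge so that $\tfrac32\arg$ stays in $(-\tfrac{3\pi}{4},\tfrac{3\pi}{4})$ keeping the cosine bounded below, matching the sign convention of $R_\alpha$ ($0\le y<\delta$). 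Off the axis the convex combination preserves the lower bound up to adjusting $k$.

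The main obstacle I anticipate is the $\dbar$-estimate \eqref{eq:phidbar} near the endpoint, because naively $\dbar\Phi$ contains the term $\bigl[1-B\bigr]\tfrac14(iy)^2\phi'''(x)$ with $\phi'''(x)=\mathcal O(|x-\alpha|^{-3/2})$, so $|y|^2\cdot|x-\alpha|^{-3/2}$ is far worse than the desired $|y|\cdot|x+iy-\alpha|^{1/2}\cdot|x+iy-\beta|^{1/2}\sim|y|\cdot|x-\alpha|^{1/2}$ near $\alpha$. The resolution — and the technical heart of the proof — is that this bad term is cut off exactly where it is bad: on the support of $1-B(y/(\alpha-x))$ one has $|y|\lesssim|\alpha-x|$, hence $|y|^2|\alpha-x|^{-3/2}\lesssim |y|\,|\alpha-x|^{1/2}$, which is the right bound. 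The remaining $B'$-term $\tfrac12 B'(y/(\alpha-x))\bigl[\tfrac{y}{(\alpha-x)^2}+\tfrac{i}{\alpha-x}\bigr](\Phi_0-\Phi_\alpha^{\mathrm{hol}})$ is controlled using $\Phi_0-\Phi_\alpha^{\mathrm{hol}}=\mathcal O(|x+iy-\alpha|^{5/2})$ and $B'(t)=\mathcal O(t^2)$ (or rather $B'$ supported in $t\in(0,1)$ and bounded), giving at worst $|\alpha-x|^{-1}\cdot|\alpha-x|^{5/2}=|\alpha-x|^{3/2}$, again absorbable. So the whole estimate hinges on getting the Taylor-remainder order $5/2$ right; I would obtain it carefully from the defining relation $\phi(x)=-\sqrt{\alpha-x}\,h_\alpha(x)$ together with the fact that $h_\alpha$ has one Lipschitz derivative at $\alpha$ (and more regularity away from it, per the Appendix), expanding $h_\alpha(x)=h_\alpha(\alpha)+h_\alpha'(\alpha)(x-\alpha)+\mathcal O((x-\alpha)^2)$ with $h_\alpha(\alpha)=0$... wait, $h_\alpha(\alpha)$ need not be zero — $h_\alpha$ is continuous at $\alpha$ from both sides with a value that need not vanish; but the matching of the two one-sided definitions at $\alpha$ forces consistency, and $\Phi_\alpha^{\mathrm{hol}}$ should really be built from $h_\alpha(\alpha)(\alpha-(x+iy))^{1/2}+\tfrac{2}{3}h_\alpha'(\alpha)\cdot(\ldots)$ — i.e. a two-term model. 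I would use the model $\Phi_\alpha^{\mathrm{hol}}(x,y):=(\alpha-(x+iy))^{1/2}\bigl[h_\alpha(\alpha)+h_\alpha'(\alpha)(\alpha-(x+iy))\bigr]$ wait this still isn't holomorphic-friendly for the cos bound; cleanest is to keep $\Phi_\alpha^{\mathrm{hol}}(x,y):=-(\alpha-(x+iy))^{1/2}\,\tilde h_\alpha(x+iy)$ for a holomorphic (polynomial) $\tilde h_\alpha$ interpolating the first two Taylor coefficients of $h_\alpha$ at $\alpha$, so that $\Phi_0-\Phi_\alpha^{\mathrm{hol}}$ inherits the $\mathcal O(|x+iy-\alpha|^{1/2+2})=\mathcal O(|x+iy-\alpha|^{5/2})$ order from the remainder of $h_\alpha$, and $\mathrm{Re}(\Phi_\alpha^{\mathrm{hol}})\ge k|x+iy-\alpha|^{3/2}$ holds on the wedge since $h_\alpha'(\alpha)>0$ dominates. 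With the model pinned down, conditions \eqref{eq:phiadef}--\eqref{eq:phibC2} all follow from the same three elementary facts (convex combination, $B'$ supported away from $t=0,1$, Taylor remainder of order $5/2$), and the proof concludes. The case $G>0$ requires no new idea — the construction is purely local near each of the $2(G+1)$ endpoints — consistent with the remark after Condition~\ref{cond:G0}.
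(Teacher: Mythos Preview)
Your approach has a genuine gap in the $\dbar$ estimate, and it stems from extending $\phi$ itself by a Taylor polynomial rather than extending the factored quantity $h_\alpha$.  Your key claim is that on the support of $1-B(y/(\alpha-x))$ one has $|y|\lesssim|\alpha-x|$ and therefore
\[
|y|^2\,|\alpha-x|^{-3/2}\;\lesssim\;|y|\,|\alpha-x|^{1/2}.
\]
This inequality is false: using $|y|\le|\alpha-x|$ only once gives $|y|^2|\alpha-x|^{-3/2}\le |y|\,|\alpha-x|^{-1/2}$, which is \emph{larger} than the target $|y|\,|\alpha-x|^{1/2}$ by a full factor of $|\alpha-x|^{-1}$.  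Consequently the term $(1-B)\dbar\Phi_0=\tfrac14(1-B)(iy)^2\phi'''(x)$ already violates \eqref{eq:phidbar}.  The same mismatch ruins the $B'$ term: your claimed order $\Phi_0-\Phi_\alpha^{\mathrm{hol}}=\mathcal O(|x+iy-\alpha|^{5/2})$ cannot hold, because $\Phi_0$ is only a second-order $y$-Taylor polynomial while $\Phi_\alpha^{\mathrm{hol}}(x,y)=h_\alpha'(\alpha)(\alpha-x-iy)^{3/2}$ has cubic-and-higher $y$-terms of size $y^3(\alpha-x)^{-3/2}$, which on the wedge $|y|\sim|\alpha-x|$ are only $\mathcal O(|x+iy-\alpha|^{3/2})$.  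A secondary problem is that $(\Phi_0)_x$ involves $\phi'''$, which under Condition~\ref{cond:smooth} is only $L^\infty$ a.e., so Property~1 (continuity of $\Phi_x$) is in jeopardy.

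The paper's construction sidesteps both issues by \emph{not} Taylor-expanding $\phi$.  Instead it writes $\phi(x)=-\sqrt{\alpha-x}\,h_\alpha(x)$ and extends $h_\alpha$ by the difference-quotient ansatz
\[
\Phi_{\alpha,0}(x,y):=-(\alpha-(x+iy))^{1/2}\bigl[h_\alpha(x)+i\bigl(h_\alpha(x+y)-h_\alpha(x)\bigr)\bigr],
\]
multiplying by the \emph{exact} analytic factor $(\alpha-z)^{1/2}$.  Then $\dbar\Phi_{\alpha,0}=\tfrac{i-1}{2}(\alpha-z)^{1/2}\bigl[h_\alpha'(x+y)-h_\alpha'(x)\bigr]$, which is $\mathcal O(|\alpha-z|^{1/2}|y|)$ directly from the Lipschitz continuity of $h_\alpha'$ --- no singular $\phi'''$ appears and no third derivative is ever needed.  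Likewise the comparison with $\Phi_\alpha^{\mathrm{hol}}=h_\alpha'(\alpha)(\alpha-z)^{3/2}$ factors as $(\alpha-z)^{1/2}$ times a difference of two extensions of $h_\alpha$, and the latter is $\mathcal O(|\alpha-z|^2)$ by the same Lipschitz bound; this is what actually produces the $\mathcal O(|\alpha-z|^{5/2})$ order you were aiming for.  The missing idea, then, is to factor out the square root \emph{before} extending.
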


\subsection{Proofs of Lemmas \ref{lem:thetaextendgeneral} and
  \ref{lem:phiextendgeneral}.}  In this subsection we construct
suitable extensions $\Theta(x,y)$ and $\Phi(x,y)$ by further
developing a strategy used in \cite{DbarOP1}.  We will use the
following notation generalizing the ``bump'' function $B(\cdot)$
introduced in Section~\ref{sec:Intro}: for an interval $[a,b]$,
\begin{equation}
B_{[a,b]}(x):=B\left(\frac{x-a}{b-a}\right)\,.
\end{equation}
This $C^{(\infty)}(\mathbb{R})$ function maps $\mathbb{R}$ onto the
interval $[0,1]$ with $B_{[a,b]}(x)\equiv 0$ for $x\le a$ and
$B_{[a,b]}(x)\equiv 1$ for $x\ge b$.  

\subsubsection{Proof of Lemma \ref{lem:thetaextendgeneral}: extension
  of $\theta(x)$}  
\paragraph{\underline{Definition of the extension $\Theta(x,y)$}}
First, we define functions $\Theta_{\alpha,0}(x,y)$ and $\Theta_{\beta,0}(x,y)$
by
\begin{equation}
  \Theta_{\alpha,0}(x,y):=2\pi - ((x+iy)-\alpha)^{1/2}\left[h_\alpha(x)+i(h_\alpha(x+y)-h_\alpha(x))\right],\quad\quad \text{$x<\beta$ and $x+y<\beta$},
\label{eq:thetaextend1alpha}
\end{equation}
and
\begin{equation}
  \Theta_{\beta,0}(x,y):=(\beta-(x+iy))^{1/2}\left[h_{\beta}(x)+i(h_\beta(x+y)-h_\beta(x))\right],\quad\quad\text{$x>\alpha$ and $x+y>\alpha$}.
\label{eq:thetaextend1beta}
\end{equation}
Here, the function $h_\alpha(x)=h_{\alpha_1}(x)$ is defined by
\eqref{eq:halphajdef} for $-\infty<x<\beta$ and the function
$h_\beta(x)=h_{\beta_1}(x)$ is defined by \eqref{eq:hbetajdef} for
$\alpha<x<+\infty$.  

\begin{remark}
  The extensions of the functions $h_\alpha$ and $h_{\beta}$ within
  the square brackets in \eqref{eq:thetaextend1alpha} and
  \eqref{eq:thetaextend1beta} respectively are Cartesian versions of
  the polar-coordinate extensions discussed in \cite{DbarOP1}, further
  generalized with the use of difference quotients in place of 
  derivatives.
\end{remark}

Next, we define analytic approximations of $\Theta_{\alpha,0}(x,y)$ and
$\Theta_{\beta,0}(x,y)$ valid for $x+iy\approx\alpha$ and $x+iy\approx\beta$
respectively:
\begin{equation}
\Theta_{\alpha,0}^{\mathrm{hol}}(x,y):=2\pi-h_\alpha'(\alpha)((x+iy)-\alpha)^{3/2},\quad
\quad\text{$x>\alpha$ or $y\neq 0$},
\end{equation}
and
\begin{equation}
\Theta_{\beta,0}^{\mathrm{hol}}(x,y):=-h_\beta'(\beta)(\beta-(x+iy))^{3/2},\quad\quad
\text{$x<\beta$ or $y\neq 0$}.
\end{equation}
In precisely the spirit of the simple example described in
Section~\ref{sec:Intro}, we may combine the two types of extensions
with the help of an appropriate angular bump function:
\begin{equation}
\Theta_\alpha(x,y):=
B\left(\left|\frac{y}{x-\alpha}\right|\right)\Theta_{\alpha,0}^{\mathrm{hol}}(x,y) +
\left[1-B\left(\left|\frac{y}{x-\alpha}\right|\right)\right]
\Theta_{\alpha,0}(x,y),
\end{equation}
and
\begin{equation}
\Theta_\beta(x,y):=
B\left(\left|\frac{y}{x-\beta}\right|\right)\Theta_{\beta,0}^{\mathrm{hol}}(x,y) +
\left[1-B\left(\left|\frac{y}{x-\beta}\right|\right)\right]\Theta_{\beta,0}(x,y).
\end{equation}
For short we will occasionally write
\begin{equation}
B_{\mathrm{ang},\alpha}:=B\left(\left|\frac{y}{x-\alpha}\right|\right)\quad\text{and}\quad
B_{\mathrm{ang},\beta}:=B\left(\left|\frac{y}{x-\beta}\right|\right).
\end{equation}

Finally, letting
\begin{equation}
a:=\alpha+\frac{1}{3}(\beta-\alpha)\quad\quad\text{and}\quad\quad
b:=\beta-\frac{1}{3}(\beta-\alpha)
\end{equation}
so that $[a,b]\subset (\alpha,\beta)$, we may smoothly glue these two
extensions together through the vertical strip $a<x<b$ in the
$(x,y)$-plane:
\begin{equation}
\Theta(x,y):=B_{[a,b]}(x)\Theta_\beta(x,y) +
\left[1-B_{[a,b]}(x)\right]
\Theta_\alpha(x,y).
\label{eq:Thetaalphabeta}
\end{equation}
This will be our extension of the function $\theta(x)$ from the
interior $(\alpha,\beta)$ of the support interval.  Taking into
account the supports of $B_{[a,b]}(x)$ and $1-B_{[a,b]}(x)$ and
comparing with the regions of definition of $\Theta_{\alpha,0}(x,y)$,
$\Theta_{\beta,0}(x,y)$, $\Theta_{\alpha,0}^{\mathrm{hol}}(x,y)$, and
$\Theta_{\beta,0}^{\mathrm{hol}}(x,y)$, we see that whenever
$\delta<(\beta-\alpha)/3$, $\Theta(x,y)$ is well-defined on the
rectangle $R$ given by the inequalities $\alpha<x<\beta$ and
$|y|<\delta$.

\begin{remark}
  It turns out that if we replace both angular bump functions
  $B_{\mathrm{ang},\alpha}$ and $B_{\mathrm{ang},\beta}$ by the
  constant function $B\equiv 1$, then the extension obtained only
  involves the functions $\Theta_{\alpha,0}(x,y)$ and
  $\Theta_{\beta,0}(x,y)$ glued together through the vertical strip
  $a<x<b$, and this simpler function satisfies all of the desired
  properties with the exception of \eqref{eq:thetabC2} and
  \eqref{eq:thetaaD2} from Property 3.  The purpose of the angular
  bump functions is to smoothly deform the simpler extension into one
  that satisfies these additional conditions (without ruining any of
  the other conditions, of course).
\end{remark}

\paragraph{\underline{Confirmation of Property 1}}
To confirm Property 1, we note that under the assumptions in force,
both functions $h_\alpha$ and $h_\beta$ have one Lipschitz continuous
derivative throughout their respective domains of definition, which
implies that $\partial_x\Theta_{\alpha,0}(x,y)$,
$\partial_y\Theta_{\alpha,0}(x,y)$, $\partial_x\Theta_{\beta,0}(x,y)$,
and $\partial_y\Theta_{\beta,0}(x,y)$ are all continuous and uniformly
bounded throughout the rectangle $R$ of definition of $\Theta(x,y)$.
Since $\Theta_{\alpha,0}^{\mathrm{hol}}(x,y)$ and
$\Theta_{\beta,0}^{\mathrm{hol}}(x,y)$ are analytic functions for
$(x,y)\in R$, their first partial derivatives are certainly
continuous.  Then, since $\Theta(x,y)$ is constructed from these more
elementary functions with the help of $C^{(\infty)}$ bump functions,
it is clear that $\Theta(x,y)$, $\Theta_x(x,y)$, and
$\Theta_y(x,y)$ are all continuous an uniformly bounded
throughout $R$.  As $h_\alpha(x)$ and $h_\beta(x)$ have a second
Lipschitz derivative for $x$ bounded away from $\alpha$ and $\beta$,
similar arguments show that $\Theta_{xx}(x,y)$, $\Theta_{xy}(x,y)$,
and $\Theta_{yy}(x,y)$ are continuous and bounded for $x+iy$ bounded
away from $\alpha$ and $\beta$.  Furthermore, for $\alpha<x<\beta$,
\begin{equation}
\begin{split}
\Theta(x,0)&=B_{[a,b]}(x)\Theta_\beta(x,0) +\left[1-B_{[a,b]}(x)\right]\Theta_\alpha(x,0)\\
&=B_{[a,b]}(x)\Theta_{\beta,0}(x,0)+\left[1-B_{[a,b]}(x)\right]\Theta_{\alpha,0}(x,0)\\
&=B_{[a,b]}(x)\theta(x)+\left[1-B_{[a,b]}(x)\right]\theta(x)\\
&=\theta(x),
\end{split}
\end{equation}
so $\Theta(x,y)$ is indeed an extension of $\theta(x)$ from the
interval $(\alpha,\beta)$ to the rectangle $R$.

\paragraph{\underline{Confirmation of Property 2}}
To confirm Property 2, first note that from \eqref{eq:Thetaalphabeta} we have
\begin{equation}
\dbar\Theta=\dbar B_{[a,b]}\cdot\left(\Theta_\beta-\Theta_\alpha\right)
+ B_{[a,b]}\dbar\Theta_\beta + \left[1-B_{[a,b]}\right]\dbar\Theta_\alpha.
\end{equation}
Now, since $\Theta_\alpha(x,y)$ and $\Theta_\beta(x,y)$ are both extensions
from $(\alpha,\beta)$ of the same function $\theta(x)$, and since they
are both uniformly Lipschitz for $x+iy$ bounded away from $\alpha$ and $\beta$
(this is where $\dbar B_{[a,b]}$ is nonzero), the first term on the right-hand
side is supported in $a<x<b$ and is $\mathcal{O}(|y|)$.  Therefore we certainly
have
\begin{equation}
\left|\dbar B_{[a,b]}\cdot\left(\Theta_\beta-\Theta_\alpha\right)\right|\le K|y|
|x+iy-\alpha|^{1/2}|x+iy-\beta|^{1/2}
\label{eq:alphabetaextenddiff}
\end{equation}
for some constant $K>0$.  It therefore remains to estimate
$\dbar\Theta_\alpha$ for $x+iy$ bounded away from $\beta$ and
$\dbar\Theta_\beta$ for $x+iy$ bounded away from $\alpha$.  
Since $\dbar\Theta_{\alpha,0}^{\mathrm{hol}}(x,y)\equiv 0$ and
$\dbar\Theta_{\beta,0}^{\mathrm{hol}}(x,y)\equiv 0$, we see that
\begin{equation}
\begin{split}
\dbar\Theta_\alpha&=\dbar B_{\mathrm{ang},\alpha}\cdot\left(\Theta_{\alpha,0}^{\mathrm{hol}}-\Theta_{\alpha,0}\right) + \left[1-B_{\mathrm{ang},\alpha}\right]\dbar\Theta_{\alpha,0}\\
\dbar\Theta_\beta&=\dbar B_{\mathrm{ang},\beta}\cdot\left(\Theta_{\beta,0}^{\mathrm{hol}}-\Theta_{\beta,0}\right)+\left[1-B_{\mathrm{ang},\beta}\right]\dbar\Theta_{\beta,0}.
\end{split}
\end{equation}
Now, by direct calculation
\begin{equation}
\left|\dbar B_{\mathrm{ang},\beta}\right| = 
\left|B'\left(\left|\frac{y}{x-\beta}\right|\right)\right|\cdot
\left|\dbar\left|\frac{y}{x-\beta}\right|\right| = 
\left|B'\left(\left|\frac{y}{x-\beta}\right|\right)\right|\cdot
\frac{1}{2}\frac{|x+iy-\beta|}{(x-\beta)^2},
\end{equation}
and since the inequality $|x-\beta|>|y|$ holds whereever the derivative
of the bump function in this formula is nonzero, 
\begin{equation}
\left|\dbar B_{\mathrm{ang},\beta}\right|\le
\left|B'\left(\left|\frac{y}{x-\beta}\right|\right)\right|\cdot
\frac{|x+iy-\beta|}{(x-\beta)^2 + y^2} = 
\left|B'\left(\left|\frac{y}{x-\beta}\right|\right)\right|\cdot
\frac{1}{|x+iy-\beta|}.
\label{eq:dbarbangestimate}
\end{equation}
Also, since for any $w$,
\begin{equation}
h_\beta(w)=\int_\beta^wh_\beta'(s)\,ds = h_\beta'(\beta)(w-\beta)+\int_\beta^w
\left[h_\beta'(s)-h_\beta'(\beta)\right]\,ds,
\end{equation}
we have
\begin{multline}
\Theta_{\beta,0}^{\mathrm{hol}}(x,y)-\Theta_{\beta,0}(x,y) \\
{}= -(\beta-(x+iy))^{1/2}
\left[(1-i)
\int_\beta^x\left[h_\beta'(s)-h_\beta'(\beta)\right]\,ds
+i\int_\beta^{x+y}\left[h_\beta'(s)-h_\beta'(\beta)\right]\,ds\right],
\end{multline}
so, since $h_\beta(x)$ has one Lipschitz continuous derivative, there are
constants $K_1>0$ and $K_2>0$ such that
\begin{equation}
\begin{split}
\left|\Theta_{\beta_0}^{\mathrm{hol}}(x,y)-\Theta_{\beta,0}(x,y)\right| &\le
|x-\beta+iy|^{1/2}\left[K_1(x-\beta)^2 + K_2(x-\beta+y)^2\right]\\
&=|x-\beta+iy|^{1/2}\left[2\overline{K}(x-\beta)^2 + 2K_2y(x-\beta) + K_2y^2\right]\\
&\le |x-\beta+iy|^{1/2}\left[2\overline{K}(x-\beta)^2 + 2K_2|y||x-\beta| + K_2y^2\right],
\end{split}
\end{equation}
where $\overline{K}:=(K_1+K_2)/2$.
Using again the inequality $|x-\beta|>|y|$ (since we are going to
multiply by $\dbar B_{\mathrm{ang},\beta}$), we therefore have
\begin{equation}
\left|\Theta_{\beta_0}^{\mathrm{hol}}(x,y)-\Theta_{\beta,0}(x,y)\right| \le
|x-\beta+iy|^{1/2}\left[2\overline{K}(x-\beta)^2 + 3K_2|y||x-\beta|\right].
\label{eq:thetaholminustheta}
\end{equation}
Since we have both $|B'(t)|\le C$ and $|B'(t)|\le C|t|$ for some
constant $C>0$, it follows from \eqref{eq:dbarbangestimate} and
\eqref{eq:thetaholminustheta} that
\begin{equation}
\begin{split}
\dbar B_{\mathrm{ang},\beta}\cdot\left(\Theta_{\beta,0}^{\mathrm{hol}}-
\Theta_{\beta,0}\right)  &= 
\mathcal{O}\left(|x+iy-\beta|^{-1/2}|y||x-\beta|\right) \\
&=\mathcal{O}\left(|y||x+iy-\beta|^{1/2}\right),
\end{split}
\end{equation}
where we have used $|x-\beta|\le |x+iy-\beta|$ in the last step.  Furthermore, 
\begin{equation}
\dbar\Theta_{\beta,0}(x,y)=(\beta-(x+iy))^{1/2}\dbar\left[h_\beta(x)+
i(h_\beta(x+y)-h_\beta(x))\right] = 
\frac{1}{2}(i-1)(\beta-(x+iy))^{1/2}\left[h_\beta'(x+y)-h_\beta'(x)\right],
\end{equation}
so since $h_\beta'(x)$ is uniformly Lipschitz near $\beta$ and
$1-B_{\mathrm{ang},\beta}$ is bounded, we also have
\begin{equation}
\left[1-B_{\mathrm{ang},\beta}\right]\dbar\Theta_{\beta,0} = 
\mathcal{O}\left(|y||x+iy-\beta|^{1/2}\right)\,.
\end{equation}
Therefore, for $x+iy$ bounded away from $\alpha$ we have
\begin{equation}
\left|\dbar\Theta_\beta(x,y)\right|\le K|y||x+iy-\alpha|^{1/2}|x+iy-\beta|^{1/2}
\end{equation}
for some constant $K>0$.  In a completely analogous fashion we see that
for $x+iy$ bounded away from $\beta$ we have
\begin{equation}
\left|\dbar\Theta_\alpha(x,y)\right|\le K|y||x+iy-\alpha|^{1/2}|x+iy-\beta|^{1/2}.
\end{equation}
Combining these results with \eqref{eq:alphabetaextenddiff} we complete the
proof that
\begin{equation}
\left|\dbar\Theta(x,y)\right|\le K|y||x+iy-\alpha|^{1/2}|x+iy-\beta|^{1/2}.
\end{equation}

Now consider $\mathrm{Im}(\Theta(x,y))$.  Since all of the bump
functions $B_{[a,b]}$, $B_{\mathrm{ang},\alpha}$, and
$B_{\mathrm{ang},\beta}$ are real-valued, it will suffice to analyze
$\mathrm{Im}(\Theta_{\alpha,0}(x,y))$ and
$\mathrm{Im}(\Theta_{\alpha,0}^{\mathrm{hol}}(x,y))$ for $x+iy\in R$
bounded away from $\beta$ and to analyze
$\mathrm{Im}(\Theta_{\beta,0}(x,y))$ and
$\mathrm{Im}(\Theta_{\beta,0}^{\mathrm{hol}}(x,y))$ for $x+iy\in R$
bounded away from $\alpha$.  Writing $\beta-(x+iy)=|\beta-(x+iy)|e^{i\phi}$
with $|\phi|<\pi/2$, we have the exact formulae
\begin{multline}
\mathrm{Im}(\Theta_{\beta,0}(x,y))=|\beta-(x+iy)|^{1/2}y\\
{}\cdot\left[
\frac{h_\beta(x+y)-h_\beta(x)}{y}\cos\left(\frac{\phi}{2}\right) -
\frac{1}{2}\cdot\frac{h_\beta(x)}{\beta-x}\left(1-\tan^2\left(\frac{\phi}{2}
\right)\right)\cos\left(\frac{\phi}{2}\right)\right],
\label{eq:imthetabeta0exact}
\end{multline}
and
\begin{equation}
\mathrm{Im}(\Theta_{\beta,0}^{\mathrm{hol}}(x,y))=|\beta-(x+iy)|^{1/2}y
\left[\frac{1}{2}h_\beta'(\beta)
\left(4\cos\left(\frac{\phi}{2}\right)-\sec\left(\frac{\phi}{2}\right)\right)
\right].
\end{equation}
Since Condition~\ref{cond:strict} requires that $h_\beta'(\beta)<0$,
the condition that $|\phi|<\pi/2$ immediately implies that
\begin{equation}
\frac{1}{2}h_\beta'(\beta)\left(4\cos\left(\frac{\phi}{2}\right)-
\sec\left(\frac{\phi}{2}\right)\right)<\frac{\sqrt{2}}{2}h_\beta'(\beta)<0.
\label{eq:imthetabeta0holbracketsR}
\end{equation}
To analyze the terms in the square brackets in
\eqref{eq:imthetabeta0exact} requires a little more work.  Suppose
first that $|\beta-(x+iy)|<\epsilon_1$.  Then, as $\epsilon_1\to 0$, we have
both
\begin{equation}
\frac{h_\beta(x+y)-h_\beta(x)}{y}\to h_\beta'(\beta)<0\quad\text{and}\quad
-\frac{1}{2}\cdot\frac{h_\beta(x)}{\beta-x}\to\frac{1}{2}h_\beta'(\beta)<0,
\end{equation}
where the inequalities follow from Condition~\ref{cond:strict}.  So $\epsilon_1$
may be taken to be small enough that $|\beta-(x+iy)|<\epsilon_1$ implies both
\begin{equation}
\frac{h_\beta(x+y)-h_\beta(x)}{y}<\frac{1}{2}h_\beta'(\beta)<0
\quad\text{and}\quad
-\frac{1}{2}\cdot\frac{h_\beta(x)}{\beta-x}<\frac{1}{4}h_\beta'(\beta)<0.
\end{equation}
Therefore, since $|\phi|<\pi/2$ implies both
\begin{equation}
\frac{\sqrt{2}}{2}<\cos\left(\frac{\phi}{2}\right)\quad\text{and}\quad
0<1-\tan^2\left(\frac{\phi}{2}\right),
\end{equation}
we see that $|\beta-(x+iy)|<\epsilon_1$ implies
\begin{equation}
\frac{h_\beta(x+y)-h_\beta(x)}{y}\cos\left(\frac{\phi}{2}\right) -
\frac{1}{2}\cdot\frac{h_\beta(x)}{\beta-x}\left(1-\tan^2\left(\frac{\phi}{2}
\right)\right)\cos\left(\frac{\phi}{2}\right)
<\frac{\sqrt{2}}{4}h_\beta'(\beta)<0.
\label{eq:imthetabeta0bracketsdisk}
\end{equation}
On the other hand, if we suppose that $|\beta-(x+iy)|>\epsilon_1/2$,
but that $x>a$ and $|y|<\epsilon_2$ for some $\epsilon_2>0$, then as
$\epsilon_2\to 0$ we have both 
\begin{equation}
\phi\to 0 \quad\text{and}\quad \frac{h_\beta(x+y)-h_\beta(x)}{y}\to h_\beta'(x).
\end{equation}
Now, from \eqref{eq:hbetajdef}, for $\alpha<a<x<\beta$ we have
\begin{equation}
  h_\beta'(x)-\frac{1}{2}\frac{h_\beta(x)}{\beta-x} = 
\frac{\theta'(x)}{\sqrt{\beta-x}}=-\frac{2\pi\psi(x)}{\sqrt{\beta-x}}<-k_1
\end{equation}
with some constant $k_1>0$ as a consequence of
Condition~\ref{cond:strict} and the square-root vanishing of $\psi(x)$
as $x\uparrow\beta$.  Therefore, by choosing $\epsilon_2$ sufficiently
small we will have
\begin{equation}
\frac{h_\beta(x+y)-h_\beta(x)}{y}\cos\left(\frac{\phi}{2}\right)-
\frac{1}{2}\cdot\frac{h_\beta(x)}{\beta-x}\left(1-\tan^2\left(\frac{\phi}{2}
\right)\right)\cos\left(\frac{\phi}{2}\right)<-\frac{1}{2}k_1.
\label{eq:imthetabeta0bracketsstrip}
\end{equation}
as long as $|\beta-(x+iy)|>\epsilon_1/2$, $x>a$, and $|y|<\epsilon_2$.
To combine these estimates, note that if $\delta>0$ is sufficiently
small, the part of the rectangle $R$ given by the inequalities
$\alpha<a<x<\beta$ and $|y|<\delta$ consists of points $(x,y)$ for
which either $|\beta-(x+iy)|<\epsilon_1$ or $|y|<\epsilon_2$, so 
\eqref{eq:imthetabeta0bracketsdisk} and 
\eqref{eq:imthetabeta0bracketsstrip} may be combined to give
\begin{equation}
\frac{h_\beta(x+y)-h_\beta(x)}{y}\cos\left(\frac{\phi}{2}\right)-
\frac{1}{2}\cdot\frac{h_\beta(x)}{\beta-x}\left(1-\tan^2\left(\frac{\phi}{2}
\right)\right)\cos\left(\frac{\phi}{2}\right)
<-k_2<0
\label{eq:imthetabeta0bracketsR}
\end{equation}
for $(x,y)\in R$, where
\begin{equation}
k_2:=\min\left\{\frac{\sqrt{2}}{4}|h_\beta'(\beta)|,\frac{1}{2}k_1\right\}.
\end{equation}
Finally, we may combine \eqref{eq:imthetabeta0holbracketsR} with
\eqref{eq:imthetabeta0bracketsR} to find that 
for $(x,y)\in R$ with $x>a$,
\begin{equation}
\begin{aligned}
\mathrm{Im}(\Theta_{\beta,0}(x,y))&\le 
-k_3y |x+iy-\beta|^{1/2}|x+iy-\alpha|^{1/2}\\
\mathrm{Im}(\Theta_{\beta,0}^{\mathrm{hol}}(x,y))&\le 
-k_3y |x+iy-\beta|^{1/2}|x+iy-\alpha|^{1/2}
\end{aligned}\quad\quad\text{if $y\ge 0$}
\end{equation}
and
\begin{equation}
\begin{aligned}
\mathrm{Im}(\Theta_{\beta,0}(x,y))&\ge 
-k_3y |x+iy-\beta|^{1/2}|x+iy-\alpha|^{1/2}\\
\mathrm{Im}(\Theta_{\beta,0}^{\mathrm{hol}}(x,y))&\ge 
-k_3y |x+iy-\beta|^{1/2}|x+iy-\alpha|^{1/2}
\end{aligned}\quad\quad\text{if $y\le 0$},
\end{equation}
where
\begin{equation}
k_3:=\frac{\displaystyle\min\left\{k_2,\frac{\sqrt{2}}{2}|h_\beta'(\beta)|\right\}}
{\sqrt{(\beta-\alpha)^2+\delta^2}}.
\end{equation}
Completely analogous arguments show that for $(x,y)\in R$ with $x<b$,
\begin{equation}
\begin{aligned}
\mathrm{Im}(\Theta_{\alpha,0}(x,y))&\le 
-k_4y |x+iy-\beta|^{1/2}|x+iy-\alpha|^{1/2}\\
\mathrm{Im}(\Theta_{\alpha,0}^{\mathrm{hol}}(x,y))&\le 
-k_4y |x+iy-\beta|^{1/2}|x+iy-\alpha|^{1/2}
\end{aligned}\quad\quad\text{if $y\ge 0$}
\end{equation}
and
\begin{equation}
\begin{aligned}
\mathrm{Im}(\Theta_{\alpha,0}(x,y))&\ge 
-k_4y |x+iy-\beta|^{1/2}|x+iy-\alpha|^{1/2}\\
\mathrm{Im}(\Theta_{\alpha,0}^{\mathrm{hol}}(x,y))&\ge 
-k_4y |x+iy-\beta|^{1/2}|x+iy-\alpha|^{1/2}
\end{aligned}\quad\quad\text{if $y\le 0$},
\end{equation}
for some constant $k_4>0$.  Letting $k=\min\{k_3,k_4\}>0$ and using
the fact that $\Theta(x,y)$ is a convex combination of
$\mathrm{Im}(\Theta_{\alpha,0}(x,y))$,
$\mathrm{Im}(\Theta_{\alpha,0}^{\mathrm{hol}}(x,y))$,
$\mathrm{Im}(\Theta_{\beta,0}(x,y))$, and
$\mathrm{Im}(\Theta_{\beta,0}^{\mathrm{hol}}(x,y))$ through the
various bump functions involved in the definition, it follows that
\begin{equation}
\begin{split}
\mathrm{Im}(\Theta(x,y))&\le -ky|x+iy-\beta|^{1/2}|x+iy-\alpha|^{1/2},\quad
\quad y\ge 0\\
\mathrm{Im}(\Theta(x,y))&\ge -ky|x+iy-\beta|^{1/2}|x+iy-\alpha|^{1/2},\quad
\quad y\le 0
\end{split}
\label{eq:Imthetapenultimate}
\end{equation}
holds for $(x,y)\in R$ if the thickness parameter $\delta$ of the
rectangle $R$ is sufficiently small.  Now, if $x\le (\alpha+\beta)/2$
then $|x+iy-\beta|$ is bounded away from zero while
$|x+iy-\alpha|\ge |y|$, and if $x\ge (\alpha+\beta)/2$ then
$|x+iy-\alpha|$ is bounded away from zero while $|x+iy-\beta|\ge
|y|$.  Combining these observations with \eqref{eq:Imthetapenultimate}
yields \eqref{eq:thetaPbeh} and \eqref{eq:thetaMbeh}.

\paragraph{\underline{Confirmation of Property 3}}
To confirm that $\Theta(x,y)$ satisfies Property 3, note that 
\begin{equation}
G_{\beta,0}(x,y):=\frac{\Theta_{\beta,0}(x,y)}{(\beta-(x+iy))^{3/2}} = 
\frac{h_\beta(x)+i(h_\beta(x+y)-h_\beta(x))}{\beta-(x+iy)}
\end{equation}
is continuous near $(x,y)=(\beta,0)$ and satisfies
\begin{equation}
G_{\beta,0}(x,y)=-h_\beta'(\beta)+\mathcal{O}(|\beta-(x+iy)|)
\end{equation}
because $h_\beta'(x)$ is Lipschitz continuous.  Similarly,
\begin{equation}
  G_{\beta,0}^{\mathrm{hol}}(x,y):=\frac{\Theta_{\beta,0}^{\mathrm{hol}}(x,y)}{(\beta-(x+iy))^{3/2}}\equiv
  -h_\beta'(\beta)>0
\end{equation}
so since near $x+iy=\beta$ (that is, for $x>b$) $G_\beta(x,y)$ is a
convex combination of $G_{\beta,0}(x,y)$ and
$G_{\beta,0}^{\mathrm{hol}}(x,y)$, the requirement \eqref{eq:thetabC1}
on $G_\beta(x,y)$ given in Property 3 is met.  And since for $|y|\ge
\beta-x$ we have $G_\beta(x,y)=G_{\beta,0}^{\mathrm{hol}}(x,y)$, we
also confirm the requirement \eqref{eq:thetabC2}.  Similar
calculations show that $G_\alpha(x,y)$ satisfies the requirements
\eqref{eq:thetaaD1} and \eqref{eq:thetaaD2}.

\subsubsection{Proof of Lemma \ref{lem:phiextendgeneral}: extension of
  $\phi(x)$} The construction of a suitable extension of $\phi(x)$
follows the same general procedure as the construction above of
$\Theta(x,y)$.  We give all details of the construction, after which
it is straightforward to follow the reasoning given in the proof of
Lemma~\ref{lem:thetaextendgeneral} to establish that Properties 1, 2, 
and 3 are satisfied.  

We first define
\begin{equation}
\Phi_{\alpha,0}(x,y):=-(\alpha-(x+iy))^{1/2}\left[h_\alpha(x)+i
\left(h_\alpha(x+y)-h_\alpha(x)\right)\right], \quad
\text{$x<\beta$ and $x+y<\beta$},
\end{equation}
and
\begin{equation}
\Phi_{\beta,0}(x,y):=-((x+iy)-\beta)^{1/2}\left[h_\beta(x)+i\left(h_\beta(x+y)-
h_\beta(x)\right)\right],\quad \text{$x>\alpha$ and $x+y>\alpha$}.
\end{equation}
The analytic approximations of these functions valid for $x+iy\approx\alpha$
and $x+iy\approx\beta$ respectively are
\begin{equation}
\Phi_{\alpha,0}^{\mathrm{hol}}(x,y):=h_\alpha'(\alpha)(\alpha-(x+iy))^{3/2},\quad
\text{$x<\alpha$ or $y\neq 0$},
\end{equation}
and
\begin{equation}
\Phi_{\beta,0}^{\mathrm{hol}}(x,y):= -h_\beta'(\beta)((x+iy)-\beta)^{3/2},\quad
\text{$x>\beta$ or $y\neq 0$}.
\end{equation}
Since the rectangles $R_\alpha$ and $R_\beta$ are disjoint,
there is no need to merge functions defined near $x+iy=\alpha$ with functions
defined near $x+iy=\beta$, so we may simply define
\begin{equation}
\Phi(x,y):=\begin{cases}
\displaystyle
B\left(\left|\frac{y}{\alpha-x}\right|\right)\Phi_{\alpha,0}^{\mathrm{hol}}(x,y)+
\left[1-B\left(\left|\frac{y}{\alpha-x}\right|\right)\right]\Phi_{\alpha,0}(x,y),
&\quad (x,y)\in R_\alpha\\\\
\displaystyle
B\left(\left|\frac{y}{x-\beta}\right|\right)\Phi_{\beta,0}^{\mathrm{hol}}(x,y)+
\left[1-B\left(\left|\frac{y}{x-\beta}\right|\right)\right]\Phi_{\beta,0}(x,y),
&\quad (x,y)\in R_\beta.
\end{cases}
\end{equation}

\section{An equivalent Riemann-Hilbert-$\dbar$ problem.}
The jump condition satisfied by the boundary values taken by
$\mathbf{B}(z)$ on $(\alpha,\beta)$ can be written in the factored
form:
\begin{equation}
\mathbf{B}_+(x)=\mathbf{B}_-(x)\begin{pmatrix}
1 & 0 \\ e^{in\theta(x)} & 1\end{pmatrix}
\begin{pmatrix} 0 & 1\\-1 & 0\end{pmatrix}
\begin{pmatrix} 1 & 0 \\e^{-in\theta(x)} & 1\end{pmatrix}.
\end{equation}
Consider the contour $\Sigma$ illustrated in Figure~\ref{fig:OpenLenses}.
\begin{figure}[h]
\begin{center}
\includegraphics{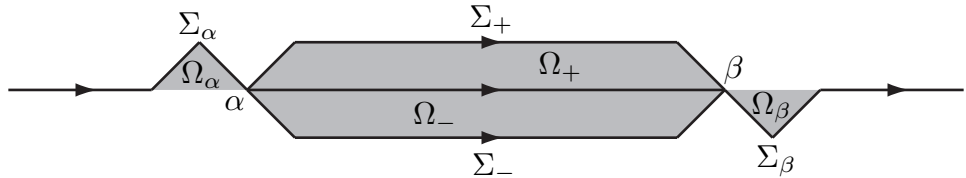}
\end{center}
\caption{The oriented contour $\Sigma$ consists of the real intervals
  $(-\infty,\alpha-2\delta)$, $(\alpha,\beta)$, and
  $(\beta+2\delta,+\infty)$, along with the indicated contour segments
  $\Sigma_\alpha$ connecting $\alpha-2\delta$ to $\alpha$, $\Sigma+$
  and $\Sigma_-$ connecting $\alpha$ to $\beta$, and $\Sigma_\beta$
  connecting $\beta$ to $\beta+2\delta$.  All contour segments are
  oriented left-to-right, and all nonhorizontal segments have
  slopes $\pm 1$.}
\label{fig:OpenLenses}
\end{figure}

Let $\Theta(x,y)$ be any extension of $\theta(x)$ having all three
properties described in Lemma~\ref{lem:thetaextendgeneral}, and let
$\Phi(x,y)$ be any extension of $\phi(x)$ having all three properties
described in Lemma~\ref{lem:phiextendgeneral}.  We define a matrix
$\mathbf{D}(x,y)$ for $z=x+iy\in \mathbb{C}\setminus \Sigma$ relative
to the domains $\Omega_+$, $\Omega_-$, $\Omega_\alpha$, and
$\Omega_\beta$ shown as shaded regions in Figure~\ref{fig:OpenLenses}
as follows.  Set
\begin{equation}
\mathbf{D}(x,y):=\mathbf{B}(x+iy)\begin{pmatrix}
1 & 0 \\-e^{-in\Theta(x,y)} & 1\end{pmatrix},\quad
x+iy\in \Omega_+,
\label{eq:DfromBOmegaplus}
\end{equation}
\begin{equation}
\mathbf{D}(x,y):=\mathbf{B}(x+iy)\begin{pmatrix}
1 & 0 \\
e^{in\Theta(x,y)} & 1\end{pmatrix},\quad
x+iy\in\Omega_-,
\end{equation}
\begin{equation}
\mathbf{D}(x,y):=\mathbf{B}(x+iy)\begin{pmatrix}
1 &  - e^{-n \Phi(x,y)}\\
0 & 1\end{pmatrix},\quad
x+iy\in\Omega_{\alpha},
\end{equation}
\begin{equation}
\mathbf{D}(x,y):=\mathbf{B}(x+iy)\begin{pmatrix}
1 &   e^{-n \Phi(x,y)}\\
0 & 1\end{pmatrix},\quad
x+iy\in\Omega_{\beta},
\end{equation}
and for all remaining $z\in\mathbb{C}\setminus \Sigma$, we set 
$\mathbf{D}(x,y):=\mathbf{B}(x+iy)$.

Because it is explicitly related to $\mathbf{B}(x+iy)$ and hence to
$\mathbf{A}(x+iy)$, the matrix $\mathbf{D}(x,y)$ will solve
Riemann-Hilbert-$\dbar$ problem \ref{rhdbp:D} to be defined below.
Define the jump matrix $\mathbf{V}_{\mathbf{D}}(z)$ for $z\in\Sigma$
as follows:
\begin{equation}
\mathbf{V}_{\mathbf{D}}(z) := \begin{pmatrix}
1 & e^{-n\phi(x)}\\ 0 & 1\end{pmatrix},\quad
\text{$z=x<\alpha-2\delta$ and $z=x>\beta+2\delta$,}
\end{equation}
\begin{equation}
\mathbf{V}_{\mathbf{D}}(z) := \begin{pmatrix}
1 & e^{-n\Phi(x,y)}\\ 0 & 1\end{pmatrix},\quad
z=x+iy\in \Sigma_{\alpha} \cup \Sigma_{\beta},
\end{equation}
\begin{equation}
\mathbf{V}_{\mathbf{D}}(z) := \begin{pmatrix}
0 & 1 \\ -1 & 0\end{pmatrix},\quad z=x\in (\alpha,\beta),
\label{eq:Djumpalphabeta}
\end{equation}
\begin{equation}
\mathbf{V}_{\mathbf{D}}(z) := \begin{pmatrix}
1 & 0 \\ e^{-in\Theta(x,y)} & 1\end{pmatrix},\quad
z=x+iy\in\Sigma_+,
\end{equation}
\begin{equation}
\mathbf{V}_{\mathbf{D}}(z) := \begin{pmatrix}
1 & 0 \\ e^{in\Theta(x,y)} & 1\end{pmatrix},\quad
z=x+iy\in\Sigma_-.
\end{equation}
Also, define the auxiliary matrix $\mathbf{W}_{0}(x,y)$ as  follows.  
\begin{equation}
\mathbf{W}_{0}(x,y):=\begin{pmatrix} 
0 & 0
\\
ine^{-in\Theta(x,y)}\dbar\Theta(x,y) & 0
\end{pmatrix},\quad x+iy\in\Omega_+,
\label{eq:W0defplus}
\end{equation}
\begin{equation}
\mathbf{W}_{0}(x,y):=\begin{pmatrix} 
0 & 0
\\
ine^{in\Theta(x,y)}\dbar\Theta(x,y) & 0
\end{pmatrix},\quad x+iy\in\Omega_-,
\label{eq:W0defminus}
\end{equation}
\begin{equation}
\mathbf{W}_{0}(x,y):=\begin{pmatrix} 
0 & n e^{-n \Phi(x,y)} \dbar \Phi(x,y)
\\
0 & 0
\end{pmatrix},\quad x+iy\in\Omega_{\alpha},
\label{eq:W0defalpha}
\end{equation}
\begin{equation}
\mathbf{W}_{0}(x,y):=\begin{pmatrix} 
0 & -n e^{-n \Phi(x,y)} \dbar \Phi(x,y)
\\
0 & 0
\end{pmatrix},\quad x+iy\in\Omega_{\beta}.
\label{eq:W0defbeta}
\end{equation}
For all remaining $(x,y)\in\mathbb{R}^2$, we set
$\mathbf{W}_0(x,y):=\mathbf{0}$.  Note that $\mathbf{W}_0(x,y)$
so-defined is compactly supported.  From the properties of the matrix
$\mathbf{B}(z)$ inherited via the substitution \eqref{eq:BfromA} from
properties of the matrix $\mathbf{A}(z)$ contained in the statement of
Riemann-Hilbert Problem~\ref{rhp:A}, it follows that $\mathbf{D}(x,y)$
solves the following hybrid Riemann-Hilbert-$\dbar$ problem:
\begin{rhdbp}
Find a $2\times 2$ matrix $\mathbf{D}(x,y)$ with the properties:
\begin{itemize}
\item[]\textbf{Continuity.}  $\mathbf{D}(x,y)$ is a continuous
  function of $x$ and $y$ for $x+iy\in\mathbb{C}\setminus\Sigma$ 
  taking continuous boundary values $\mathbf{D}_+(x,y)$ (respectively
  $\mathbf{D}_-(x,y)$) on $\Sigma$ from the left (respectively right).
\item[]\textbf{Jump Conditions.} The boundary values are connected by the
relation
\begin{equation}
\mathbf{D}_+(x,y)=\mathbf{D}_-(x,y)
\mathbf{V}_{\mathbf{D}}(x+iy),\quad
x+iy\in \Sigma.
\end{equation}
\item[]\textbf{Deviation From Analyticity.}  For $x+iy\in\mathbb{C}$,
\begin{equation}
\dbar\mathbf{D}(x,y) = \mathbf{D}(x,y)\mathbf{W}_{0}(x,y). 
\end{equation}
(Note that in particular $\dbar\mathbf{D}(x,y)=\mathbf{0}$ for
$x+iy\not\in\Omega_+\cup\Omega_-\cup\Omega_\alpha\cup\Omega_\beta$.)
\item[]\textbf{Normalization.}  The matrix $\mathbf{D}(x,y)$ is
  normalized as follows:
\begin{equation}
\lim_{x,y\rightarrow\infty}\mathbf{D}(x,y)=\mathbb{I}.
\end{equation}
\end{itemize}
\label{rhdbp:D}
\end{rhdbp}

\section{Construction of a Global Approximation to $\mathbf{D}(x,y)$}
In this section we will build a global approximation to $\mathbf{D}(x,y)$
by considering a Riemann-Hilbert problem obtained from
Riemann-Hilbert-$\dbar$ problem~\ref{rhdbp:D} by ignoring the
``$\dbar$ component'' of the problem:
\begin{rhp}
Find a $2\times 2$ matrix $\dot{\mathbf{D}}(z)$ with the properties:
\begin{itemize}
\item[]\textbf{Analyticity.}  $\dot{\mathbf{D}}(z)$ is an analytic function
  for $z \in \mathbb{C}\setminus\Sigma$ taking
  continuous boundary values $\dot{\mathbf{D}}_+(z)$ (respectively 
$\dot{\mathbf{D}}_-(z)$) on $\Sigma$ from the left (respectively right).
\item[]\textbf{Jump Conditions.} The boundary values are connected by the
relation
\begin{equation}
\dot{\mathbf{D}}_+(z)=\dot{\mathbf{D}}_-(z)
\mathbf{V}_{\mathbf{D}}(x,y),\quad
z=x+iy\in \Sigma.
\end{equation}
\item[]\textbf{Normalization.}  The matrix $\dot{\mathbf{D}}(z)$ is
  normalized as follows:
\begin{equation}
\lim_{z\rightarrow\infty}\dot{\mathbf{D}}(z)=\mathbb{I}.
\end{equation}
\end{itemize}
\label{rhp:Ddot}
\end{rhp}

Riemann-Hilbert Problem~\ref{rhp:Ddot} has been obtained from
Riemann-Hilbert-$\dbar$ Problem~\ref{rhdbp:D} in an ad-hoc fashion,
and even though $\mathbf{D}(x,y)$ clearly exists, it is not
immediately clear that a solution $\dot{\mathbf{D}}(z)$ to
Riemann-Hilbert Problem~\ref{rhp:Ddot} exists.
Theorem~\ref{thm:RHPExist} below asserts that a unique solution
exists, and describes important asymptotic properties of
$\dot{\mathbf{D}}(z)$.  The Theorem describes the asymptotic behavior
of the solution in three different regions of the complex plane: two
square domains $S_\alpha$ and $S_\beta$ of
side-length $2 \delta$, with $S_\alpha$ centered at $\alpha$
and $S_\beta$ centered at $\beta$, and one exterior domain,
$\mathbb{C} \setminus \left( S_{\alpha} \cup
  S_{\beta} \right)$.  We further subdivide each square into
four regions according to the contour $\Sigma$ as indicated in
Figure~\ref{fig:Squares}.
\begin{figure}[h]
\begin{center}
\includegraphics{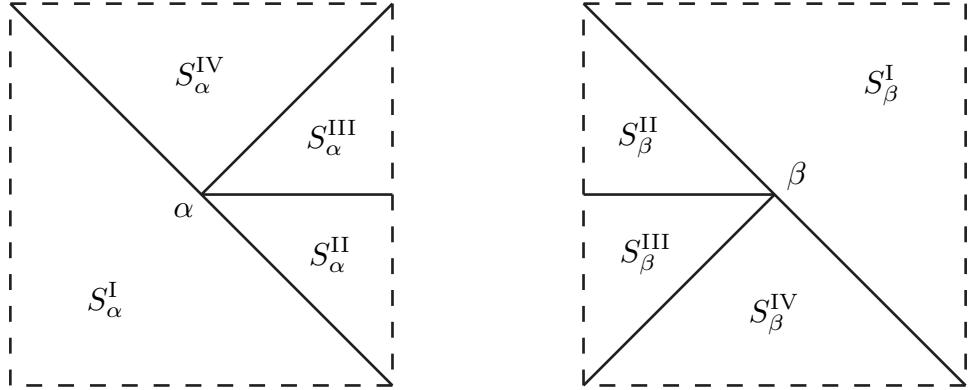}
\end{center}
\caption{The squares $S_\alpha$ (left) and
  $S_\beta$ (right), each subdivided into four regions as
  indicated.}
\label{fig:Squares}
\end{figure}

Note that according to Property 3 in
Lemma~\ref{lem:thetaextendgeneral} and
Lemma~\ref{lem:phiextendgeneral}, the restriction of the jump matrix
$\mathbf{V}_{\mathbf{D}}(z)$ to $\Sigma\cap S_\alpha$ and
$\Sigma\cap S_\beta$ is piecewise analytic.  Indeed, if we define
\begin{equation}
u_\alpha=u_\alpha(z):=
\left[h_\alpha'(\alpha)\right]^{2/3}(\alpha-z),\quad\text{and}\quad
u_\beta=u_\beta(z):=\left[-h_\beta'(\beta)\right]^{2/3}(z-\beta),
\end{equation}
with the positive two-thirds power meant in each case, then we have
\begin{equation}
\mathbf{V}_{\mathbf{D}}(z)=\begin{pmatrix} 1 & e^{-nu_\alpha^{3/2}}\\
0 & 1\end{pmatrix},\quad z\in\Sigma_\alpha\cap S_\alpha,
\end{equation}
\begin{equation}
\mathbf{V}_{\mathbf{D}}(z)=\begin{pmatrix}0 & 1\\-1 & 0\end{pmatrix},
\quad z\in(\alpha,\beta)\cap S_\alpha,
\end{equation}
\begin{equation}
\mathbf{V}_{\mathbf{D}}(z)=\begin{pmatrix}1 & 0 \\e^{nu_\alpha^{3/2}} & 1
\end{pmatrix},\quad z\in \Sigma_\pm\cap S_\alpha,
\end{equation}
and
\begin{equation}
\mathbf{V}_{\mathbf{D}}(z)=\begin{pmatrix}1 & e^{-nu_\beta^{3/2}}\\
0 & 1\end{pmatrix},\quad z\in\Sigma_\beta\cap S_\beta,
\end{equation}
\begin{equation}
\mathbf{V}_{\mathbf{D}}(z)=\begin{pmatrix}0 & 1\\-1 & 0\end{pmatrix},
\quad z\in(\alpha,\beta)\cap S_\beta,
\end{equation}
\begin{equation}
\mathbf{V}_{\mathbf{D}}(z)=\begin{pmatrix}1 & 0 \\e^{nu_\beta^{3/2}} & 1
\end{pmatrix},\quad z\in \Sigma_\pm\cap S_\beta.
\end{equation}
For all $z\in\Sigma$ outside of the squares $S_\alpha$ and
$S_\beta$, with the notable exception of the interval
$(\alpha+\delta,\beta-\delta)$ where $\mathbf{V}_{\mathbf{D}}(z)$ is a
constant matrix, the jump matrix $\mathbf{V}_{\mathbf{D}}(z)$ decays
exponentially to the identity matrix as $n\to\infty$, as a consequence
of both the variational inequality $\phi(x)>0$ for $x<\alpha-2\delta$
and $x>\beta+2\delta$ and also the inequalities on
$\mathrm{Im}(\Theta(x,y))$ in Property 2 of
Lemma~\ref{lem:thetaextendgeneral} and the inequality on
$\mathrm{Re}(\Phi(x,y))$ in Property 2 of
Lemma~\ref{lem:phiextendgeneral}.

In a way that is by now quite standard (see \cite{op1,op2}), these
facts suggest an explicit model for $\dot{\mathbf{D}}(z)$ that we will
call $\hat{\mathbf{D}}(z)$ and that we will now define.  Let
$\gamma(z)$ be the function analytic in $\mathbb{C}\setminus
[\alpha,\beta]$ determined by the conditions
\begin{equation}
\gamma(z)^4 = \frac{z-\beta}{z-\alpha}\,,\quad\text{and}
\quad \lim_{z\to\infty}\gamma(z)=1,
\label{eq:gamma}
\end{equation}
and let $\mathbf{U}$ denote the unitary eigenvector matrix for
$\mathbf{V}_{\mathbf{D}}(z)$ on $(\alpha,\beta)$:
\begin{equation}
\mathbf{U}:=\frac{1}{\sqrt{2}}\begin{pmatrix}e^{-i\pi/4} & e^{i\pi/4}\\
e^{i\pi/4} & e^{-i\pi/4}\end{pmatrix}.
\end{equation}
Then, we set
\begin{equation}
\hat{\mathbf{D}}(z):=\mathbf{U}\gamma(z)^{\sigma_3}\mathbf{U}^\dagger=
\begin{pmatrix}\tfrac{1}{2}\left(\gamma(z)+\gamma(z)^{-1}\right)&
\tfrac{1}{2i}\left(\gamma(z)-\gamma(z)^{-1}\right)\\\\
-\tfrac{1}{2i}\left(\gamma(z)-\gamma(z)^{-1}\right)
&\tfrac{1}{2}\left(\gamma(z)+\gamma(z)^{-1}\right)
\end{pmatrix},\quad
z\in\mathbb{C}\setminus(S_\alpha\cup S_\beta),
\label{eq:Dhatout}
\end{equation}
\begin{equation}
\hat{\mathbf{D}}(z):=-\sqrt{2\pi}\mathbf{U}\left(\frac{3n}{4}\right)^{\sigma_3/6}
\gamma(z)^{\sigma_3}u_\alpha(z)^{\sigma_3/4}\sigma_2\mathbf{M}(u_\alpha(z))
\sigma_3e^{nu_\alpha(z)^{3/2}\sigma_3/2},\quad z\in S_\alpha,
\label{eq:Airybeta}
\end{equation}
and
\begin{equation}
\hat{\mathbf{D}}(z):=\sqrt{2\pi}\mathbf{U}\left(\frac{4}{3n}\right)^{\sigma_3/6}
\gamma(z)^{\sigma_3}u_\beta(z)^{-\sigma_3/4}\mathbf{M}(u_\beta(z))
e^{nu_\beta(z)^{3/2}\sigma_3/2},\quad z\in S_\beta,
\label{eq:Airyalpha}
\end{equation}
where $\mathbf{M}(u)$ is defined as follows with
$\xi:=\left(\tfrac{3n}{4} \right)^{2/3}u$.
\begin{equation}
\mathbf{M}(u):=\begin{pmatrix}
e^{-3\pi i/4}\mathrm{Ai}'\left(\xi\right) &
e^{11\pi i/12}\mathrm{Ai}'\left(\xi e^{-2\pi i/3}
\right)\\
e^{-i\pi/4}\mathrm{Ai}\left(\xi\right) &
e^{i\pi/12}\mathrm{Ai}\left(\xi e^{-2\pi i/3}
\right)\end{pmatrix},\quad -\frac{\pi}{4}<\arg(u)<\frac{3\pi}{4},
\label{eq:MI}
\end{equation}
\begin{equation}
\mathbf{M}(u):=\begin{pmatrix}
e^{-5\pi i/12}\mathrm{Ai}'\left(\xi e^{2\pi i/3}\right) &
e^{11\pi i/12}\mathrm{Ai}'\left(\xi e^{-2\pi i/3}
\right)\\
e^{-7\pi i/12}\mathrm{Ai}\left(\xi e^{2\pi i/3}\right) &
e^{i\pi/12}\mathrm{Ai}\left(\xi e^{-2\pi i/3}
\right)\end{pmatrix},\quad \frac{3\pi}{4}<\arg(u)<\pi,
\label{eq:MII}
\end{equation}
\begin{equation}
\mathbf{M}(u):=\begin{pmatrix}
e^{11\pi i/12}\mathrm{Ai}'\left(\xi e^{-2\pi i/3}\right) &
e^{7\pi i/12}\mathrm{Ai}'\left(\xi e^{2\pi i/3}
\right)\\
e^{i\pi /12}\mathrm{Ai}\left(\xi e^{-2\pi i/3}\right) &
e^{5\pi i/12}\mathrm{Ai}\left(\xi e^{2\pi i/3}
\right)\end{pmatrix},\quad -\pi<\arg(u)<-\frac{3\pi}{4},
\label{eq:MIII}
\end{equation}
\begin{equation}
\mathbf{M}(u):=\begin{pmatrix}
e^{-3\pi i/4}\mathrm{Ai}'\left(\xi\right) &
e^{7\pi i/12}\mathrm{Ai}'\left(\xi e^{2\pi i/3}
\right)\\
e^{-i\pi /4}\mathrm{Ai}\left(\xi\right) &
e^{5\pi i/12}\mathrm{Ai}\left(\xi e^{2\pi i/3}
\right)\end{pmatrix},\quad -\frac{3\pi}{4}<\arg(u)<-\frac{\pi}{4}.
\label{eq:MIV}
\end{equation}
Here $\mathrm{Ai}(\xi)$ denotes the Airy function, the unique solution
of $y''-\xi y=0$ with the asymptotic behavior
\begin{equation}
\mathrm{Ai}(\xi)=
\frac{e^{-2\xi^{3/2}/3}}{2\xi^{1/4}\sqrt{\pi}}(1+\mathcal{O}(|\xi|^{-3/2}))\quad
\text{and}\quad
\mathrm{Ai}'(\xi)=-\frac{\xi^{1/4}e^{-2\xi^{3/2}/3}}{2\sqrt{\pi}}
(1+\mathcal{O}(|\xi|^{-3/2}))
\label{eq:AiryAsymp}
\end{equation}
as $\xi\to\infty$ with $\arg(\xi)\in (-\pi,\pi)$.

\begin{remark}
  For those readers familiar with the notation of the paper
  \cite{op2} we make the following clarification.  
  The matrix $\mathbf{M}(u)$ defined here may be expressed in the form
\begin{equation}
\mathbf{M}(u)=
\frac{1}{\sqrt{2\pi}}\begin{pmatrix}
0 & -i\\1 & 0
\end{pmatrix}
\mathbf{P}\left(\left(\tfrac{3}{4}\right)^{2/3}u\right)e^{-nu^{3/2}\sigma_3/2}.
\end{equation}
where the local parametrix $\mathbf{P}(\zeta)$ is as defined in
\cite{op2}, equations (1.36)--(1.40).
\end{remark}

The point of introducing the matrix $\hat{\mathbf{D}}(z)$ is the
following.
\begin{theorem}
\label{thm:RHPExist}
Assume the conditions on the external field $V$ stated in the
Introduction.  Let $n, N \to \infty$ so that $N/n \to c$ with $0 < c <
\infty$.  Then for $n$ sufficiently large, there is a unique solution
$\dot{\mathbf{D}}(z)$ to Riemann-Hilbert Problem~\ref{rhp:Ddot}, which
possesses the following global asymptotic description:  
\begin{equation}
\dot{\mathbf{D}}(z) = \left(\mathbb{I}+\mathcal{O}\left(
\frac{1}{n\sqrt{1+|z|^2}}\right)\right)
\hat{\mathbf{D}}(z),
\label{eq:DdotDhat}
\end{equation}
uniformly with respect to $z\in\mathbb{C}$ as $n\to\infty$.  
\end{theorem}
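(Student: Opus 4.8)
The plan is to reduce Riemann--Hilbert Problem~\ref{rhp:Ddot} to a problem with jump matrix close to the identity for an error matrix, and then to invoke the standard theory of such small-norm problems. First I would record two structural facts about the explicit model $\hat{\mathbf{D}}(z)$: it is invertible in each of its three regions of definition with $\det\hat{\mathbf{D}}(z)\equiv 1$ (off the squares this is immediate from \eqref{eq:Dhatout}, and inside the squares it follows because the Wronskian identity for the Airy function makes $\det\mathbf{M}(u)$ a nonzero constant while all other factors in \eqref{eq:Airybeta}--\eqref{eq:Airyalpha} have unit determinant); and the piecewise (in $\arg u$) formulas \eqref{eq:MI}--\eqref{eq:MIV} are arranged precisely so that $\hat{\mathbf{D}}(z)$ has \emph{exactly} the jump $\mathbf{V}_{\mathbf{D}}(z)$ along the part of $\Sigma$ interior to $S_\alpha\cup S_\beta$ and, via $\mathbf{U}\gamma(z)^{\sigma_3}\mathbf{U}^\dagger$, along the central band $(\alpha+\delta,\beta-\delta)$ where $\mathbf{V}_{\mathbf{D}}$ is the constant matrix of \eqref{eq:Djumpalphabeta}. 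Consequently one is led to seek $\dot{\mathbf{D}}$ in the form $\dot{\mathbf{D}}(z)=\mathbf{E}(z)\hat{\mathbf{D}}(z)$, where $\mathbf{E}(z)$ must be analytic off the reduced contour $\Sigma_{\mathbf{E}}:=\partial S_\alpha\cup\partial S_\beta\cup\bigl(\Sigma\setminus(\overline{S_\alpha}\cup\overline{S_\beta}\cup(\alpha+\delta,\beta-\delta))\bigr)$, must tend to $\mathbb{I}$ at infinity, and must satisfy $\mathbf{E}_+=\mathbf{E}_-\mathbf{V}_{\mathbf{E}}$ with $\mathbf{V}_{\mathbf{E}}:=\hat{\mathbf{D}}_-\mathbf{V}_{\mathbf{D}}\hat{\mathbf{D}}_+^{-1}$ (on the square boundaries, where $\mathbf{V}_{\mathbf{D}}=\mathbb{I}$, this reads $\mathbf{V}_{\mathbf{E}}=\hat{\mathbf{D}}_{\mathrm{out}}\hat{\mathbf{D}}_{\mathrm{in}}^{-1}$ with appropriate orientation).

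Next I would estimate $\mathbf{V}_{\mathbf{E}}-\mathbb{I}$ on $\Sigma_{\mathbf{E}}$. On $\partial S_\alpha$ and $\partial S_\beta$ this is the matching computation: substituting the large-argument Airy expansions \eqref{eq:AiryAsymp} into $\mathbf{M}(u_\alpha(z))$ or $\mathbf{M}(u_\beta(z))$ (note $|\xi|\sim n^{2/3}\delta\to\infty$ there, so the Airy remainder is $\mathcal{O}(n^{-1})$) and using Property~3 of Lemmas~\ref{lem:thetaextendgeneral} and \ref{lem:phiextendgeneral} (which identify $u_\alpha(z)^{3/2}$ and $u_\beta(z)^{3/2}$, up to the prescribed positive constants, with the exponents carried by the jumps of $\mathbf{V}_{\mathbf{D}}$ near the endpoints), one finds that the leading terms telescope against the algebraic prefactors $\sqrt{2\pi}\,\mathbf{U}(3n/4)^{\pm\sigma_3/6}\gamma(z)^{\sigma_3}$ and the fractional powers of $u_\alpha$, $u_\beta$, leaving $\mathbf{V}_{\mathbf{E}}(z)=\mathbb{I}+\mathcal{O}(n^{-1})$ uniformly on the square boundaries. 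On the rest of $\Sigma_{\mathbf{E}}$ there is no jump of $\hat{\mathbf{D}}$, so $\mathbf{V}_{\mathbf{E}}=\hat{\mathbf{D}}\,\mathbf{V}_{\mathbf{D}}\,\hat{\mathbf{D}}^{-1}$ with $\hat{\mathbf{D}}=\mathbf{U}\gamma(z)^{\sigma_3}\mathbf{U}^\dagger$ bounded with bounded inverse (those contour portions are bounded away from $\alpha,\beta$, and $\gamma(z)\to 1$ at $\infty$), while $\mathbf{V}_{\mathbf{D}}-\mathbb{I}$ is exponentially small: on the lens segments $\Sigma_\pm$ and on $\Sigma_\alpha,\Sigma_\beta$ by the sign inequalities for $\mathrm{Im}(\Theta)$ and $\mathrm{Re}(\Phi)$ in Property~2 of the two Lemmas, and on the real tails $x<\alpha-2\delta$, $x>\beta+2\delta$ by the variational inequality $\phi(x)>0$, which for large $|x|$ also supplies a factor decaying like $(1+|z|^2)^{-1/2}$. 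Altogether $\|\mathbf{V}_{\mathbf{E}}-\mathbb{I}\|_{L^\infty(\Sigma_{\mathbf{E}})}+\|\mathbf{V}_{\mathbf{E}}-\mathbb{I}\|_{L^2(\Sigma_{\mathbf{E}})}=\mathcal{O}(n^{-1})$.

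With this bound the standard small-norm theory finishes the proof. The Beals--Coifman (Cauchy) integral operator on $L^2(\Sigma_{\mathbf{E}})$ associated with $\mathbf{V}_{\mathbf{E}}-\mathbb{I}$ has operator norm $\mathcal{O}(n^{-1})$, so for $n$ large $\mathbb{I}$ minus this operator is invertible by Neumann series, producing a unique $\mathbf{E}$ with $\mathbf{E}-\mathbb{I}\in L^2(\Sigma_{\mathbf{E}})$ and a Cauchy-integral representation $\mathbf{E}(z)=\mathbb{I}+\tfrac{1}{2\pi i}\int_{\Sigma_{\mathbf{E}}}\mathbf{X}(s)\bigl(\mathbf{V}_{\mathbf{E}}(s)-\mathbb{I}\bigr)(s-z)^{-1}\,ds$ for a density $\mathbf{X}$ with $\|\mathbf{X}-\mathbb{I}\|_{L^2}=\mathcal{O}(n^{-1})$; since $\Sigma_{\mathbf{E}}$ is compact apart from the exponentially-damped real tails, this yields $\mathbf{E}(z)=\mathbb{I}+\mathcal{O}\bigl(n^{-1}(1+|z|^2)^{-1/2}\bigr)$ uniformly in $z\in\mathbb{C}$. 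Then $\dot{\mathbf{D}}(z):=\mathbf{E}(z)\hat{\mathbf{D}}(z)$ solves Riemann--Hilbert Problem~\ref{rhp:Ddot}, and uniqueness follows because the ratio of any two solutions has no jumps, hence is entire, and tends to $\mathbb{I}$ at infinity, so equals $\mathbb{I}$ by Liouville once one notes that $\det\dot{\mathbf{D}}\equiv 1$ forces invertibility. Finally $\dot{\mathbf{D}}(z)\hat{\mathbf{D}}(z)^{-1}=\mathbf{E}(z)=\mathbb{I}+\mathcal{O}\bigl(1/(n\sqrt{1+|z|^2})\bigr)$, which is exactly \eqref{eq:DdotDhat}.

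The step I expect to be the main obstacle is the matching estimate on $\partial S_\alpha$ and $\partial S_\beta$: one must verify that the powers of $n$, the factors $\gamma(z)^{\sigma_3}$ and the fractional powers of $u_\alpha$, $u_\beta$, and the discrete matrices $\mathbf{U}$, $\sigma_2$, $\sigma_3$ in \eqref{eq:Airybeta}--\eqref{eq:Airyalpha} are tuned so that, after the Airy asymptotics \eqref{eq:AiryAsymp} are inserted, the ratio $\hat{\mathbf{D}}_{\mathrm{out}}(z)\hat{\mathbf{D}}_{\mathrm{in}}(z)^{-1}$ is genuinely $\mathbb{I}+\mathcal{O}(n^{-1})$ and not merely $\mathcal{O}(1)$. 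This hinges on Property~3 of Lemmas~\ref{lem:thetaextendgeneral} and \ref{lem:phiextendgeneral}, which guarantee both that $u_\alpha$ and $u_\beta$ are the correct conformal coordinates near the endpoints and that $e^{nu_\alpha(z)^{3/2}\sigma_3/2}$ and $e^{nu_\beta(z)^{3/2}\sigma_3/2}$ are precisely the exponential factors present in $\mathbf{V}_{\mathbf{D}}$ there; a secondary, purely technical point is propagating the weight $\sqrt{1+|z|^2}$ uniformly through the Cauchy-operator estimates, which is by now routine.
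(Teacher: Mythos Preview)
Your argument is essentially the paper's: form the ratio $\mathbf{F}(z):=\dot{\mathbf{D}}(z)\hat{\mathbf{D}}(z)^{-1}$, verify that its jump is exactly $\mathbb{I}$ on $\Sigma$ inside the squares and on $(\alpha+\delta,\beta-\delta)$ (via Property~3 of Lemmas~\ref{lem:thetaextendgeneral}--\ref{lem:phiextendgeneral} and the Airy identity \eqref{eq:AiryIdentity}), is $\mathbb{I}+\mathcal{O}(n^{-1})$ on $\partial S_\alpha\cup\partial S_\beta$ by the Airy asymptotics \eqref{eq:AiryAsymp}, and is exponentially close to $\mathbb{I}$ elsewhere, then invoke small-norm theory. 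The one place where the paper is more careful than your sketch is the choice of function space: since \eqref{eq:DdotDhat} is claimed uniformly for \emph{all} $z\in\mathbb{C}$, including on the contour, the paper sets up the error problem in a H\"older space (referring to \cite{op1} and Appendix~A of \cite{SSE}) rather than in $L^2$; your Beals--Coifman/$L^2$ argument produces the Cauchy-integral representation but does not by itself deliver pointwise control of $\mathbf{E}(z)-\mathbb{I}$ right up to $\Sigma_{\mathbf{E}}$ without an additional step. (A trivial notational remark: the paper reserves the symbol $\mathbf{E}$ for the $\dbar$ error matrix of Section~\ref{sec:DBARExist} and calls your ratio $\mathbf{F}$.)
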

\begin{proof}
  Let $\mathbf{F}(z):=\dot{\mathbf{D}}(z)\hat{\mathbf{D}}(z)^{-1}$.
  It is easy to see from the properties of $\dot{\mathbf{D}}(z)$
  required by the conditions of Riemann-Hilbert Problem~\ref{rhp:Ddot}
  and the explicit formulae given for the matrix $\hat{\mathbf{D}}(z)$
  in various parts of the complex plane that $\mathbf{F}(z)$ is a
  matrix that is required to have the following properties.  Firstly,
  $\mathbf{F}(z)$ is analytic at least for
  $z\in\mathbb{C}\setminus\Sigma^{\mathbf{F}}$, where
  $\Sigma^{\mathbf{F}}$ is the union of $\Sigma$ and the boundaries of
  the square regions $S_\alpha$ and $S_\beta$, and
  $\mathbf{F}(z)$ takes continuous boundary values on
  $\Sigma^{\mathbf{F}}$.  Secondly, the boundary values satisfy
  $\mathbf{F}_+(z)=\mathbf{F}_-(z)\mathbf{V}_{\mathbf{F}}(z)$ for some
  jump matrix function $\mathbf{V}_{\mathbf{F}}(z)$ defined on
  $\Sigma^{\mathbf{F}}$ that is explicitly calculable in terms of
  $\mathbf{V}_{\mathbf{D}}(z)$ and the boundary values taken on
  $\Sigma^{\mathbf{F}}$ by $\hat{\mathbf{D}}(z)$.  Thirdly,
  $\mathbf{F}(z)$ must tend to the identity matrix as $z\to\infty$.
  In other words, these three facts show that $\mathbf{F}(z)$
  satisfies its own Riemann-Hilbert problem.

  The Riemann-Hilbert problem satisfied by $\mathbf{F}(z)$ is of a
  particularly convenient type: it is a ``small-norm'' problem in the
  sense that the jump matrix $\mathbf{V}_{\mathbf{F}}(z)$ is a small
  perturbation of the identity matrix in a suitable space of
  matrix-valued functions on the contour $\Sigma^{\mathbf{F}}$.  In
  fact, it is easy to check by direct calculation that
  $\mathbf{V}_{\mathbf{F}}(z)\equiv\mathbb{I}$ for
  $\Sigma^{\mathbf{F}}\cap (S_\alpha\cup S_\beta)$.
  This is a direct consequence of Property 3 in
  Lemma~\ref{lem:thetaextendgeneral} and
  Lemma~\ref{lem:phiextendgeneral} characterizing respectively the
  extensions $\Theta(x,y)$ and $\Phi(x,y)$ on these portions of the
  contour $\Sigma$, and of the identity 
\begin{equation}
\mathrm{Ai}(\xi) + e^{-2\pi i/3}\mathrm{Ai}(\xi e^{-2\pi i/3}) +
e^{2\pi i/3}\mathrm{Ai}(\xi e^{2\pi i/3})\equiv 0.
\label{eq:AiryIdentity}
\end{equation}
An even easier calculation shows that
$\mathbf{V}_{\mathbf{F}}(z)\equiv\mathbb{I}$ for
$\alpha+\delta<z<\beta-\delta$.  With the use of the asymptotic
formulae \eqref{eq:AiryAsymp}, one sees that on the boundaries of the
two squares $S_\alpha$ and $S_\beta$,
$\mathbf{V}_{\mathbf{F}}(z)-\mathbb{I}$ is uniformly
$\mathcal{O}(n^{-1})$, and on all remaining parts of
$\Sigma^{\mathbf{F}}$ one finds (in part by the estimates
\eqref{eq:thetaPbeh}--\eqref{eq:thetaMbeh} on
$\mathrm{Im}(\Theta(x,y))$ in Property 2 of
Lemma~\ref{lem:thetaextendgeneral} and the estimate
\eqref{eq:RePhiraw} on $\mathrm{Re}(\Phi(x,y))$ in Property 2 of
Lemma~\ref{lem:phiextendgeneral}) that
$\mathbf{V}_{\mathbf{F}}(z)-\mathbb{I}$ is uniformly exponentially
small as $n\to\infty$ and also decays rapidly as $z\to\infty$.

Since for our purposes we need to control the size of
$\mathbf{F}(z)-\mathbb{I}$ right up to the contour
$\Sigma^{\mathbf{F}}$, we need to formulate the Riemann-Hilbert
problem for $\mathbf{F}(z)$ in an appropriate space in which the
boundary values of $\mathbf{F}(z)$ are H\"older continuous with some
exponent $\alpha\in (0,1]$.  To do this we need to observe that as a
consequence of our assumptions on the external field $V$ and the
corresponding smoothness of $\Theta(x,y)$ and $\Phi(x,y)$ described in
Property 1 of Lemma~\ref{lem:thetaextendgeneral} and
Lemma~\ref{lem:phiextendgeneral}, and also as a consequence of the
piecewise analyticity of the comparison matrix $\hat{\mathbf{D}}(z)$,
the jump matrix $\mathbf{V}_{\mathbf{F}}(z)$ is sufficiently smooth on
a sufficiently (piecewise) smooth contour that the H\"older version of
the small-norm theory applies.  The result is that as $n\to\infty$,
$\mathbf{F}(z)$ exists uniquely in the space of matrices with
H\"older-continuous boundary values, and also
$\mathbf{F}(z)-\mathbb{I}=\mathcal{O}(n^{-1})$ holds uniformly
throughout the complex $z$-plane.

  A detailed account of the existence theory for small-norm
  Riemann-Hilbert problems is discussed, for example, in \cite{op1}.
  Specific information relevant to the application of small-norm
  theory in H\"older spaces can be found in Appendix A of \cite{SSE}.
\end{proof}

An important property of $\hat{\mathbf{D}}(z)$ is that for all $z$
where it is defined, $\det(\hat{\mathbf{D}}(z))\equiv 1$.  Therefore,
$\hat{\mathbf{D}}(z)$ and its inverse $\hat{\mathbf{D}}(z)^{-1}$ have
comparable bounds in any matrix norm.  From \eqref{eq:Dhatout} one may
then see that $\hat{\mathbf{D}}(z)$ and its inverse are bounded as
$n\to\infty$ uniformly for $z\in\mathbb{C}\setminus(S_\alpha\cup
S_\beta)$.  On the other hand, from \eqref{eq:Airyalpha} and
\eqref{eq:Airybeta} together with the definition
\eqref{eq:MI}--\eqref{eq:MIV} of $\mathbf{M}(u)$ one sees that 
$\hat{\mathbf{D}}(z)=\mathcal{O}(n^{1/6})$ and $\hat{\mathbf{D}}(z)^{-1}=
\mathcal{O}(n^{1/6})$ hold for $z\in S_\alpha\cup S_\beta$, although
for our purposes a more useful estimate coming from the same formulae
is that
\begin{equation}
\hat{\mathbf{D}}(z)=\mathcal{O}(|z-\alpha|^{-1/4}|z-\beta|^{-1/4})\quad
\text{and}\quad
\hat{\mathbf{D}}(z)^{-1}=\mathcal{O}(|z-\alpha|^{-1/4}|z-\beta|^{-1/4})
\label{eq:Dhatgrow}
\end{equation}
holds uniformly for $z$ in bounded sets (the constants implicit in the order
relations are independent of both $n$ and $z$).

The construction of $\hat{\mathbf{D}}(z)$ is one part of the argument
where the details are somewhat different for $G>0$ than for $G=0$.  To
handle the case with more than one interval of support one must
replace the definition of $\hat{\mathbf{D}}(z)$ for
$z\in\mathbb{C}\setminus(S_\alpha\cup S_\beta)$
with a matrix constructed from Riemann theta functions for
hyperelliptic curves of nonzero genus modeled by two copies of the
complex $z$-plane identified along cuts made on the real axis in the
support intervals of the equilibrium measure $\mu_*$.  Full details
may be found, for example, in \cite{op2}.  The key property of uniform
boundedness of $\hat{\mathbf{D}}(z)$ away from the endpoints of the
support intervals remains valid in this more general case.

\section{A $\dbar$ Problem and Existence Theorem}
\label{sec:DBARExist}
Having constructed $\dot{\mathbf{D}}(x+iy)$, we now define
$\mathbf{E}(x,y)$ via
\begin{equation}
\mathbf{E}(x,y)  := \mathbf{D}(x,y) \dot{\mathbf{D}}(x+iy)^{-1}.
\label{eq:EfromD}
\end{equation}
It is immediately clear that $\mathbf{E}(x,y)$ is continuous in
$\mathbb{C}$.  It is straightforward to compute the $\dbar$ derivative
of $\mathbf{E}(x,y)$, and we learn that $\mathbf{E}(x,y)$ solves
$\dbar$ Problem~\ref{dbp:01} below.  We define a ``dressed'' version
of the matrix $\mathbf{W}_0(x,y)$ as follows:
\begin{equation}
\label{eq:calWdef1}
\mathbf{W}(x,y) :=  \dot{\mathbf{D}}(x+iy)
\mathbf{W}_{0}(x,y)\dot{\mathbf{D}}(x+iy)^{-1}, \quad (x,y) \in \mathbb{R}^2.
\end{equation}

\begin{dbp}
\label{dbp:01}
Find a $2\times 2$ matrix $\mathbf{E}(x,y)$ with the properties:
\begin{itemize}
\item[]\textbf{Continuity.}  $\mathbf{E}(x,y)$ is a continuous function
  of $x$ and $y$ for $(x,y)\in\mathbb{R}^2$.
\item[]\textbf{Deviation From Analyticity.}  For $(x,y)\in\mathbb{R}^2$,
\begin{equation}
\dbar\mathbf{E}(x,y) = \mathbf{E}(x,y)\mathbf{W}(x,y). 
\label{eq:dbarE}
\end{equation}
(Note that in particular $\dbar\mathbf{E}(x,y)=\mathbf{0}$ for
$(x,y)\not\in\Omega_+\cup\Omega_-\cup\Omega_\alpha\cup\Omega_\beta$.)
\item[]\textbf{Normalization.}  The matrix $\mathbf{E}(x,y)$ is
  normalized as follows:
\begin{equation}
\lim_{x,y\rightarrow\infty}\mathbf{E}(x,y)=\mathbb{I}.
\label{eq:ENorm}
\end{equation}
\end{itemize}
\end{dbp}

In view of the normalization condition \eqref{eq:ENorm} and the fact
that $\mathbf{W}(x,y)\equiv \mathbf{0}$ outside some compact set, we may
invert the $\dbar$ operator in \eqref{eq:dbarE} with the help of the
Cauchy kernel:
\begin{equation}
\mathbf{E}(x,y)=\mathbb{I}+\mathcal{K}\mathbf{E}(x,y),
\label{eq:Eintegralequation}
\end{equation}
where
\begin{equation}
\mathcal{K}\mathbf{E}(x,y):=-\frac{1}{\pi}\iint_{\mathbb{R}^2}
((u+iv)-(x+iy))^{-1}\mathbf{E}(u,v)\mathbf{W}(u,v)\,du\,dv.
\end{equation}  
It is a basic fact that if $\mathbf{E}(x,y)$ satisfies the integral
equation \eqref{eq:Eintegralequation} then $\mathbf{E}(x,y)$ also
solves $\dbar$ Problem~\ref{dbp:01}; in fact the integral equation
\eqref{eq:Eintegralequation} is equivalent to $\dbar$
Problem~\ref{dbp:01}.

In this section we will show that the integral operator $\mathcal{K}$,
when considered in the space $L^{\infty}(\mathbb{R}^{2})$, has norm
bounded by $C n^{-1/3}\log(n)$ for some $C>0$.  This implies that the
integral equation \eqref{eq:Eintegralequation} may be solved by
Neumann series.

The strategy to prove this is quite straightforward: because the
singularity of the Cauchy kernel is integrable in $\mathbb{R}^{2}$,
the basic estimate is:
\begin{equation}
  \| \mathcal{K} \mathbf{H} \|_{L^{\infty}(\mathbb{R}^{2})} \le \|\mathbf{H}
  \|_{L^{\infty}(\mathbb{R}^{2})} \sup_{(x,y)\in\mathbb{R}^2}\left[
\frac{1}{\pi}\iint_{\mathbb{R}^2}
  \frac{\|\mathbf{W}(u,v)\|}{\left| ((u+iv)-(x+iy))\right|}\,du\,dv\right],
\label{eq:KOpNorm}
\end{equation}
where $\|\mathbf{W}(u,v)\|$ is a pointwise matrix norm, i.e. a norm of
the matrix $\mathbf{W}$ evaluated at $(u,v)$.  Since $\mathbf{W}$
is uniformly bounded and has compact support, this immediately implies
that $\mathcal{K}$ is a bounded operator on
$L^{\infty}(\mathbb{R}^{2})$.  The goal is then to prove that the
integral appearing on the right hand side of \eqref{eq:KOpNorm} is
small, and for this the $(u,v)$ dependence of $\|\mathbf{W}(u,v) \|$
will be essential.

\begin{theorem}
  There is a unique solution $\mathbf{E}$ to $\dbar$
  Problem~\ref{dbp:01}, which possesses the following uniform
  asymptotic description, valid for all $(x,y) \in \mathbb{R}^2$:
\begin{equation}
\mathbf{E}(x,y) =  \mathbb{I} 
+\mathcal{O}\left(\frac{\log(n)}{n^{1/3}\sqrt{1+x^2+y^2}}\right),
\label{eq:Eestimate}
\end{equation}
where the constant implicit in the order notation is independent of
$n$ and $z$ and depends only on the external field $V$ and the
constant $c$.
\end{theorem}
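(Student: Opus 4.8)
The plan is to realize the unique solution of $\dbar$ Problem~\ref{dbp:01} as the Neumann series for the integral equation \eqref{eq:Eintegralequation}, so the crux is to show that the operator $\mathcal{K}$ has small norm on $L^\infty(\mathbb{R}^2)$; in fact I will establish the sharper bound
\begin{equation}
\frac{1}{\pi}\iint_{\mathbb{R}^2}\frac{\|\mathbf{W}(u,v)\|}{|(u+iv)-(x+iy)|}\,du\,dv\le\frac{C\log(n)}{n^{1/3}\sqrt{1+x^2+y^2}},
\nonumber
\end{equation}
valid uniformly in $(x,y)$, from which \eqref{eq:Eestimate} follows at once. Throughout, the constants $K,k$ of Lemmas~\ref{lem:thetaextendgeneral}--\ref{lem:phiextendgeneral} and of \eqref{eq:Dhatgrow}--\eqref{eq:DdotDhat} are uniform because, with $N/n\to c$ and $c$ fixed, $\theta$, $\phi$ and their extensions depend only on $V$ and $c$.

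First I would record a pointwise bound on the dressed weight $\mathbf{W}(x,y)$ of \eqref{eq:calWdef1}. On $\Omega_\pm$ the single nonzero entry of $\mathbf{W}_0$ in \eqref{eq:W0defplus}--\eqref{eq:W0defminus} has modulus $n|\dbar\Theta(x,y)|\,e^{n\,\mathrm{Im}(\Theta(x,y))}$ (with the sign of the exponent fixed by the sign of $y$), which by \eqref{eq:thetadbar} together with \eqref{eq:thetaPbeh}--\eqref{eq:thetaMbeh} is at most $Cn|y|\,|x+iy-\alpha|^{1/2}|x+iy-\beta|^{1/2}e^{-cn|y|^{3/2}}$; likewise on $\Omega_\alpha\cup\Omega_\beta$ the nonzero entry in \eqref{eq:W0defalpha}--\eqref{eq:W0defbeta} is controlled via \eqref{eq:phidbar} and \eqref{eq:phiPbeh}. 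Conjugating by $\dot{\mathbf{D}}$ and using that $\dot{\mathbf{D}}(z)$ and $\dot{\mathbf{D}}(z)^{-1}$ are $\mathcal{O}(|z-\alpha|^{-1/4}|z-\beta|^{-1/4})$ on bounded sets (a consequence of \eqref{eq:Dhatgrow} and \eqref{eq:DdotDhat}), the factors $|x+iy-\alpha|^{\pm1/2}|x+iy-\beta|^{\pm1/2}$ cancel exactly, leaving $\|\mathbf{W}(x,y)\|\le Cn|y|e^{-cn|y|^{3/2}}$ on $\Omega_+\cup\Omega_-\cup\Omega_\alpha\cup\Omega_\beta$ and $\mathbf{W}\equiv\mathbf{0}$ elsewhere. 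Rescaling $|y|=n^{-2/3}s$ then gives $\|\mathbf{W}\|_{L^1(\mathbb{R}^2)}\le Cn^{-1/3}$ while $\|\mathbf{W}\|_{L^\infty}\le Cn^{1/3}$: the weight is \emph{not} small in sup-norm but is concentrated in a strip of thickness $\sim n^{-2/3}$ about the real axis.

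Next I would estimate the integral in \eqref{eq:KOpNorm}. For $|z|$ exceeding twice the radius of a disc containing $\mathrm{supp}(\mathbf{W})$, the denominator is comparable to $|z|$ throughout the support, so the integral is at most $C|z|^{-1}\|\mathbf{W}\|_{L^1}\le Cn^{-1/3}/|z|$. For $|z|$ bounded, I would integrate first in $u$ over the bounded $u$-extent of each of $\Omega_\pm,\Omega_\alpha,\Omega_\beta$, using $\int\,du/\sqrt{(u-x)^2+(v-y)^2}\le C(1+\log_+(1/|v-y|))$ on a bounded $u$-interval, and then rescale $v=n^{-2/3}s$, $y=n^{-2/3}\sigma$; this reduces matters to $Cn^{-1/3}\int_0^\infty se^{-cs^{3/2}}\log_+\!\big(Cn^{2/3}/|s-\sigma|\big)\,ds$. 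Splitting the last integral according to $|s-\sigma|\ge1$ or $|s-\sigma|<1$ and exploiting the rapid decay of $se^{-cs^{3/2}}$ in the first region and its boundedness in the second, the integral is $\mathcal{O}(\log n)$ uniformly in $\sigma$, hence in $y$. Combining the two regimes (bounded $z$, large $z$) yields the displayed pointwise bound on the kernel of $\mathcal{K}$, and in particular $\|\mathcal{K}\|_{L^\infty\to L^\infty}\le Cn^{-1/3}\log n$.

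Finally, for $n$ large $\mathrm{Id}-\mathcal{K}$ is invertible on $L^\infty(\mathbb{R}^2)$, so \eqref{eq:Eintegralequation} has the unique solution $\mathbf{E}=(\mathrm{Id}-\mathcal{K})^{-1}\mathbb{I}$, which (being equivalent to the integral equation) is the unique solution of $\dbar$ Problem~\ref{dbp:01}, and $\|\mathbf{E}\|_{L^\infty}\le2$. Then $\mathbf{E}-\mathbb{I}=\mathcal{K}\mathbf{E}$ together with the kernel bound of the previous step gives $|\mathbf{E}(x,y)-\mathbb{I}|\le\|\mathbf{E}\|_{L^\infty}\,Cn^{-1/3}\log(n)\,(1+x^2+y^2)^{-1/2}$, which is \eqref{eq:Eestimate}. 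I expect the main obstacle to be the integral estimate of the third paragraph: the (apparently nonintegrable) logarithmic singularity of the two-dimensional Cauchy kernel when $z$ lies inside the strip must be balanced against the $n^{-2/3}$ concentration and the $n^{1/3}$ sup-norm growth of $\mathbf{W}$, and it is precisely this competition that forces the factor $\log n$ into the final bound.
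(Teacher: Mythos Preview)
Your proposal is correct and follows essentially the same route as the paper: the same pointwise bound $\|\mathbf{W}(u,v)\|\le Cn|v|e^{-cn|v|^{3/2}}$ via cancellation of the $|z-\alpha|^{\pm1/2}|z-\beta|^{\pm1/2}$ factors against the conjugation by $\dot{\mathbf{D}}^{\pm1}$, followed by integrating first in $u$ to produce a logarithm in $|v-y|$ and then rescaling $v=n^{-2/3}s$. The only cosmetic differences are that you treat the large-$|z|$ decay explicitly via the $L^1$ bound on $\mathbf{W}$ (the paper just says the final estimate ``follows immediately'' from the operator-norm bound), and that you control the remaining $s$-integral by splitting on $|s-\sigma|\gtrless1$ whereas the paper pulls out a factor $\tfrac{2}{3}\log n$ and handles the residual integral by Cauchy--Schwarz; both variants give the same $n^{-1/3}\log n$ bound.
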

\begin{proof}
  We begin by describing the asymptotic behavior of $\mathbf{W}(x,y)$.
  According to \eqref{eq:calWdef1}, $\mathbf{W}(x,y)$ is obtained from
  $\mathbf{W}_0(x,y)$ by conjugation, so we start by making the
  following two observations about $\mathbf{W}_0(x,y)$. Firstly, 
\begin{equation}
\begin{split}
  \mathrm{supp}(\mathbf{W}_0)&=
  \overline{\Omega_+\cup\Omega_-\cup\Omega_\alpha\cup\Omega_\beta}\\
  &\subset B:=\{(x,y)\in\mathbb{R}^2:\quad \alpha-2\delta\le x\le \beta+2\delta,\,\,\, |y|\le\delta\}.
\end{split}
\end{equation}
Secondly, as a consequence of the
  definition of $\mathbf{W}_0(x,y)$ given in
  \eqref{eq:W0defplus}--\eqref{eq:W0defbeta} and Property 2 in
  Lemma~\ref{lem:thetaextendgeneral} and
  Lemma~\ref{lem:phiextendgeneral} describing $\dbar\Theta(x,y)$,
  $\mathrm{Im}(\Theta(x,y))$, $\dbar\Phi(x,y)$, and
  $\mathrm{Re}(\Phi(x,y))$ for $(x,y)$ in the support of
  $\mathbf{W}_0$, we may assume that
\begin{equation}
\left\|\mathbf{W}_0(u,v)\right\|\le Kne^{-kn|v|^{3/2}}
|v||w-\alpha|^{1/2}|w-\beta|^{1/2},\quad w=u+iv\in B,
\end{equation}
for some constants $K>0$ and $k>0$.  
Using the definition \eqref{eq:calWdef1} of
$\mathbf{W}(x,y)$ in terms of $\mathbf{W}_0(x,y)$,
Theorem~\ref{thm:RHPExist} together with \eqref{eq:Dhatgrow} implies
that 
\begin{equation}
  \left\|\mathbf{W}(u,v)\right\|\le Kn|v|e^{-kn|v|^{3/2}},\quad w=u+iv\in B.
\label{eq:WEstimate}
\end{equation}
Of course $\|\mathbf{W}(u,v)\|\equiv 0$ for $u+iv\not\in B$.
Therefore, we have
\begin{equation}
\begin{split}
\iint_{\mathbb{R}^2}\frac{\|\mathbf{W}(u,v)\|}{|(u+iv)-(x+iy)|}\,du\,dv
&\le Kn\iint_B\frac{|v|e^{-kn|v|^{3/2}}}{|(u+iv)-(x+iy)|}\,du\,dv \\ &=
Kn\int_{-\delta}^{\delta}|v|e^{-kn|v|^{3/2}}\int_{\alpha-2\delta}^{\beta+2\delta}
\frac{du}{\sqrt{(u-x)^2+(v-y)^2}}\,dv.
\end{split}
\label{eq:keyestimate1}
\end{equation}

Now we will show that there is a constant $C>0$ such that 
\begin{equation}
I(x,y,v):=\int_{\alpha-2\delta}^{\beta+2\delta}\frac{du}{\sqrt{(u-x)^2+(v-y)^2}}\le
C\log\left(1+\frac{1}{|v-y|}\right),\quad (x,y)\in\mathbb{R}^2,\quad
v\in\mathbb{R}.
\label{eq:innerintegralestimate}
\end{equation}
Indeed, since $|(u+iv)-(x+iy)|\ge |v-y|$, on the one hand we have
\begin{equation} 
I(x,y,v)\le\int_{\alpha-2\delta}^{\beta+2\delta}\frac{du}{|v-y|}=\frac{\beta-\alpha+4\delta}{|v-y|}=\frac{C_1}{|v-y|}.
\label{eq:vmybig}
\end{equation}
We will use this estimate when $|v-y|$ is large.  On the other hand,
for $|v-y|$ small we have the following.  Firstly, since
$|(u+iv)-(x+iy)|\ge |u-x|$,
\begin{equation}
\text{$x\le\alpha-2\delta-1$ or $x\ge \beta+2\delta+1$}
\quad\implies\quad 
I(x,y,v)\le\int_{\alpha-2\delta}^{\beta+2\delta}\frac{du}{|u-x|}\le
\int_{\alpha-2\delta}^{\beta+2\delta}du = \beta-\alpha+4\delta.
\label{eq:otherhandfirst}
\end{equation}
Secondly, we have may evaluate $I(x,y,v)$ explicitly as
\begin{equation}
I(x,y,v)=\mathrm{arcsinh}\left(\frac{\beta+2\delta-x}{|v-y|}\right)-
\mathrm{arcsinh}\left(\frac{\alpha-2\delta-x}{|v-y|}\right),
\end{equation}
from which it follows that 
\begin{equation}
\alpha-2\delta-1\le x\le\alpha-2\delta<\beta+2\delta\quad\implies\quad
I(x,y,v)\le\mathrm{arcsinh}\left(\frac{\beta+2\delta-x}{|v-y|}\right)
\le\mathrm{arcsinh}\left(\frac{\beta-\alpha+4\delta+1}{|v-y|}\right),
\label{eq:otherhandsecond}
\end{equation}
\begin{equation}
\begin{split}
\alpha-2\delta\le x\le \beta+2\delta\quad\implies\quad
I(x,y,v)&=\mathrm{arcsinh}\left(\frac{\beta+2\delta-x}{|v-y|}\right)+
\mathrm{arcsinh}\left(\frac{x-\alpha+2\delta}{|v-y|}\right)\\ &\le
2\,\mathrm{arcsinh}\left(\frac{\beta-\alpha+4\delta}{|v-y|}\right),
\end{split}
\end{equation}
and
\begin{equation}
\alpha-2\delta<\beta+2\delta\le x\le\beta+2\delta+1\quad\implies\quad
I(x,y,v)\le\mathrm{arcsinh}\left(\frac{x-\alpha+2\delta}{|v-y|}\right)\le
\mathrm{arcsinh}\left(\frac{\beta-\alpha+2\delta+1}{|v-y|}\right).
\label{eq:otherhandlast}
\end{equation}
The estimates \eqref{eq:otherhandfirst} and
\eqref{eq:otherhandsecond}--\eqref{eq:otherhandlast} may be combined
for $|v-y|$ sufficiently small to give
\begin{equation}
I(x,y,v)\le C_2\log\left(\frac{1}{|v-y|}\right).
\label{eq:vmysmall}
\end{equation}
This estimate is useful for $|v-y|$ small.  Taking \eqref{eq:vmysmall} with
\eqref{eq:vmybig} yields \eqref{eq:innerintegralestimate}.

Using \eqref{eq:innerintegralestimate} in \eqref{eq:keyestimate1} and
extending the integration from $|v|\le\delta$ to $v\in\mathbb{R}$
yields
\begin{equation}
\iint_{\mathbb{R}^2}\frac{\|\mathbf{W}(u,v)\|}{|(u+iv)-(x+iy)|}\,du\,dv
\le Kn\int_{-\infty}^{+\infty}|v|e^{-kn|v|^{3/2}}\log\left(1+\frac{1}{|v-y|}
\right)\,dv
\end{equation}
for some modified positive constant $K>0$ independent of
$(x,y)\in\mathbb{R}^2$ and $n$.  Now rescaling the integration
variable by $v=n^{-2/3}s$ gives
\begin{equation}
\begin{split}
n\int_{-\infty}^{+\infty}|v|e^{-kn|v|^{3/2}}\log\left(1+\frac{1}{|v-y|}\right)\,dv &
=
n^{-1/3}\int_{-\infty}^{+\infty}|s|e^{-k|s|^{3/2}}\log\left(1+\frac{n^{2/3}}{|s-n^{2/3}y|}\right)\,ds\\
&\le n^{-1/3}\int_{-\infty}^{+\infty}|s|e^{-k|s|^{3/2}}\log\left(n^{2/3}+\frac{n^{2/3}}{|s-n^{2/3}y|}\right)\,ds\\
&= \frac{2}{3}n^{-1/3}\log(n)\int_{-\infty}^{+\infty}|s|e^{-k|s|^{3/2}}\,ds \\
&\quad\quad\quad\quad{}+
n^{-1/3}\int_{-\infty}^{+\infty}|s|e^{-k|s|^{3/2}}\log\left(1+\frac{1}{|s-n^{2/3}y|}
\right)\,ds.
\end{split}
\end{equation}
The first integral is clearly finite and independent of $n$, and since by
Cauchy-Schwarz,
\begin{equation}
\int_{-\infty}^{+\infty}|s|e^{-k|s|^{3/2}}\log\left(1+\frac{1}{|s-n^{2/3}y|}
\right)\,ds\le\left[\int_{-\infty}^{+\infty}
s^2e^{-2k|s|^{3/2}}\,ds\right]^{1/2}
\left[\int_{-\infty}^{+\infty}\log\left(1+\frac{1}{|s|}\right)^2\,ds\right]^{1/2}
\end{equation}
the second integral is bounded by a finite quantity independent of $n$.
This proves that
\begin{equation}
\sup_{(x,y)\in\mathbb{R}^2}\iint_{\mathbb{R}^2}\frac{\|\mathbf{W}(u,v)\|}
{|(u+iv)-(x+iy)|}\,du\,dv\le Cn^{-1/3}\log(n)
\end{equation}
holds for some constant $C>0$ independent of $n$, for $n$ sufficiently large.

The Neumann series for the integral equation
\eqref{eq:Eintegralequation} corresponding to $\dbar$
Problem~\ref{dbp:01} therefore converges in $L^\infty(\mathbb{R}^2)$
for sufficiently large $n$, and the estimate \eqref{eq:Eestimate}
follows immediately.
\end{proof}

\section{Large-$n$ Asymptotics for $\mathbf{A}(z)$ and the Orthogonal
  Polynomials}
We will restrict our attention to $\Omega_{+}$ in ``the bulk'' (i.e.\@
away from the endpoints $\alpha$ and $\beta$) and also to the upper
half-plane in the vicinity of the endpoint $\beta$.  Considerations
for $z$ near $\alpha$ are nearly identical, and we will omit them for
the sake of brevity.  Since the orthogonal polynomials being
considered have real coefficients, the asymptotic behavior for $z$ in
the lower half-plane, $\mathbb{C}_{-}$, may always be obtained by
complex conjugation.  In this section and the next section we will use
the notation
\begin{equation}
\Delta_n:=n^{-1/3}\log(n).
\label{eq:deltadef}
\end{equation}

\subsection{Asymptotics of the leading coefficients}
The leading coefficients $\kappa_{n-1,n-1}>0$ and $\kappa_{n,n}>0$ are
obtained from $\mathbf{A}(z)$ using \eqref{eq:kappas}.  In a neighborhood
of $z=\infty$, the matrix $\mathbf{A}(z)$ is related to $\mathbf{E}(x,y)$
via
\begin{equation}
\mathbf{A}(z)=e^{n\ell\sigma_3/2}\mathbf{E}(x,y)\dot{\mathbf{D}}(z)e^{ng(z)\sigma_3}
e^{-n\ell\sigma_3/2}.
\end{equation}
Using \eqref{eq:Dhatout}, \eqref{eq:DdotDhat}, and \eqref{eq:Eestimate}, we
get
\begin{equation}
\begin{split}
A_{12}(z)&=\frac{1}{2i}e^{n(\ell-g(z))}\left(\gamma(z)-\gamma(z)^{-1}+\mathcal{O}\left(\frac{\Delta_n}{z}\right)\right)\\
A_{21}(z)&=-\frac{1}{2i}e^{n(g(z)-\ell)}\left(\gamma(z)-\gamma(z)^{-1}+
\mathcal{O}\left(\frac{\Delta_n}{z}\right)\right),
\end{split}
\end{equation}
where the error terms are valid for $z$ near $z=\infty$.  Now, from 
\eqref{eq:gdef} we have
\begin{equation}
g(z)=\log(z)+\mathcal{O}(z^{-1}),\quad z\to\infty,
\end{equation}
and from \eqref{eq:gamma} and the condition that $\gamma(z)\to 1$ as
$z\to\infty$ we easily obtain
\begin{equation}
\gamma(z)=1-\frac{1}{4z}(\beta-\alpha)+\mathcal{O}(z^{-2}),\quad z\to\infty.
\end{equation}
Therefore, using \eqref{eq:kappas}, we obtain
\begin{equation}
\kappa_{n-1,n-1}^2 = \frac{\beta-\alpha}{8\pi}e^{-n\ell}\left(1+\mathcal{O}(\Delta_n)\right)\quad\text{and}\quad
\kappa_{n,n}^2 = \frac{2}{(\beta-\alpha)\pi}e^{-n\ell}\left(1+\mathcal{O}(\Delta_n)\right)
\label{eq:kappaasymp}
\end{equation}
as $n\to\infty$.  These formulae may be combined with the asymptotic
formulae for $A_{11}(z)$ and $A_{22}(z)$ to be given below to obtain
asymptotic formulae for the orthonormal polynomials $p_{n-1}(z)$ and 
$p_n(z)$.

\subsection{Asymptotics of the orthogonal polynomials in the bulk}
Within the set $\Omega_{+}$, the solution $\mathbf{A}(z)$ to
Riemann-Hilbert Problem~\ref{rhp:A} is related to $\mathbf{E}(x,y)$ via
\begin{equation}
\mathbf{A}(z) = e^{n\ell\sigma_{3}/2} \mathbf{E}(x,y)\dot{\mathbf{D}}(z) 
\begin{pmatrix}
1 & 0 \\e^{-in\Theta(x,y)} & 1\end{pmatrix}e^{ng(z)\sigma_3}e^{-n\ell\sigma_3/2},
\quad z=x+iy\in\Omega_+,
\label{eq:AEDdot}
\end{equation}
as follows from \eqref{eq:BfromA}, \eqref{eq:DfromBOmegaplus}, and
\eqref{eq:EfromD}.
According to \eqref{eq:Acontents}, the first column of $\mathbf{A}(z)$
contains the orthogonal polynomials:
\begin{equation}
A_{11}(z)=\frac{1}{\kappa_{n,n}}p_n(z),\quad\text{and}\quad
A_{21}(z)=-\frac{2\pi i}{\kappa_{n-1,n-1}}p_{n-1}(z).
\label{eq:Afirstcolumn}
\end{equation}
Via \eqref{eq:AEDdot} these may be expressed as follows:
\begin{align}
A_{11}(z) &=  
\left[E_{11}(x,y)\left(\dot{D}_{11}(z) + e^{-in\Theta(x,y)}
\dot{D}_{12}(z) \right)\right.\\
\nonumber &\quad\quad\quad\left.{}+ E_{12}(x,y) \left( 
\dot{D}_{21}(z) + e^{-in\Theta(x,y)} \dot{D}_{22}(z) \right)
\right] e^{ng(z)},\\
A_{21}(z) &=  
\left[E_{21}(x,y)\left(\dot{D}_{11}(z) + e^{-in\Theta(x,y)} 
\dot{D}_{12}(z) \right)\right.\\ 
\nonumber &\quad\quad\quad\left.{}+ E_{22}(x,y) \left( 
\dot{D}_{21}(z) + e^{-in\Theta(x,y)}\dot{D}_{22}(z) \right)
\right] e^{n(g(z)  - \ell )}.
\end{align}

Using the asymptotic estimate \eqref{eq:Eestimate} of
$\mathbf{E}(x,y)-\mathbb{I}$, the relation \eqref{eq:DdotDhat} between
$\dot{\mathbf{D}}(z)$ and $\hat{\mathbf{D}}(z)$, and the
explicit formula \eqref{eq:Dhatout} for $\hat{\mathbf{D}}(z)$ valid for
$z\in\Omega_+$, straightforward manipulations yield
\begin{align}
A_{11}(z)&=e^{n(g(z)-i\Theta(x,y)/2)}a(z)
\left[\cos\left(\frac{1}{2}\left(n\Theta(x,y)-\varphi(z)\right)\right)
\left(1+\mathcal{O}\left(\Delta_n\right)\right)\right.
\label{eq:polyas1}\\
\nonumber &\quad\quad\quad\left.{}+
\sin\left(\frac{1}{2}\left(n\Theta(x,y)+\varphi(z)\right)\right)\mathcal{O}\left(\Delta_n\right)\right]\\
A_{21}(z)&=
-ie^{n(g(z)-\ell-i\Theta(x,y)/2)}a(z)
\left[\sin\left(\frac{1}{2}\left(n\Theta(x,y)+\varphi(z)\right)\right)
\left(1+\mathcal{O}\left(\Delta_n\right)\right)\right.
\label{eq:polyas2}\\
\nonumber &\quad\quad\quad\left.{}+
\cos\left(\frac{1}{2}\left(n\Theta(x,y)-\varphi(z)\right)\right)
\mathcal{O}\left(\Delta_n\right)\right],
\end{align}
where
\begin{equation}
a(z):=\frac{\sqrt{\beta-\alpha}}{(z-\alpha)^{1/4}(\beta-z)^{1/4}}\quad
\text{and}\quad
\varphi(z):=\arcsin\left(\frac{2z-(\alpha+\beta)}{\beta-\alpha}\right)
\end{equation}
are both functions analytic for
$z\in\mathbb{C}\setminus(\mathbb{R}\setminus[\alpha,\beta])$.  One
apparent difficulty with these asymptotic formulae is that they
involve an extension $\Theta(x,y)$ of $\theta(x)$ that is completely
arbitrary except that it must satisfy Properties 1--3 of
Lemma~\ref{lem:thetaextendgeneral}.  (While our proof of
Lemma~\ref{lem:thetaextendgeneral} was by construction, there was no
assertion of uniqueness, and indeed there are many extensions
$\Theta(x,y)$ having the required properties.)  On the other hand, it
is also easy to see that the differences between various extensions
$\Theta(x,y)$ may be absorbed into the error terms.  For example, if
we fix $x\in (\alpha,\beta)$ and fix $y>0$ sufficiently small so as to
be in the region $\Omega_+$, then by Property~2 of
Lemma~\ref{lem:thetaextendgeneral} we have $\mathrm{Im}(\Theta(x,y))\le -k y$
for some $k>0$ (here we are using the fact that $x$ is bounded away
from $\alpha$ and $\beta$), so \eqref{eq:polyas1} and \eqref{eq:polyas2}
become
\begin{align}
A_{11}(z)= \frac{1}{2}e^{ng(z)}a(z)e^{-i\varphi(z)/2}\left(1+\mathcal{O}\left(
\Delta_n\right)\right),\quad y\gg n^{-1},\label{eq:polyas1a}\\
\intertext{and}
A_{21}(z)=-\frac{1}{2}e^{n(g(z)-\ell)}a(z)e^{i\varphi(z)/2}\left(1+\mathcal{O}
\left(\Delta_n\right)\right),\quad y\gg n^{-1}.\label{eq:polyas2a}
\end{align}
On the other hand, if we suppose that $x\in(\alpha,\beta)$ is fixed and
$y=\mathcal{O}(n^{-1})$, then using Property 1 of
Lemma~\ref{lem:thetaextendgeneral} and the Mean Value Theorem we have
\begin{equation}
\Theta(x,y)=\theta(x)+\Theta_y(x,\xi_1)y,\quad 0\le \xi_1\le y
\end{equation}
and using \eqref{eq:CR} to eliminate $\Theta_y$ in terms of $\Theta_x$
and $\dbar\Theta$ this becomes
\begin{equation}
\Theta(x,y)=\theta(x)+i\Theta_x(x,\xi_1)y
-2i\dbar\Theta(x,\xi_1)y,\quad 0\le\xi_1\le y.
\end{equation}
Since $x$ is bounded away from $\alpha$ and $\beta$, Property 1 of
Lemma~\ref{lem:thetaextendgeneral} guarantees further that $\Theta_{xy}$ is
continuous, so another application of the Mean Value Theorem gives
\begin{equation}
\begin{split}
\Theta_x(x,\xi)y &= \Theta_x(x,0)y +\Theta_{xy}(x,\xi_2)y\xi_1\\
 &=\theta'(x)y + \Theta_{xy}(x,\xi_2)y\xi_1.
\end{split}
\end{equation}
Finally, using Property 2 of Lemma~\ref{lem:thetaextendgeneral} to control
$\dbar\Theta$ and using the assumption that $y=\mathcal{O}(n^{-1})$ we find
\begin{equation}
\Theta(x,y)=\theta(x)+i\theta'(x)y + \mathcal{O}(n^{-2}).
\end{equation}
It follows that \eqref{eq:polyas1} and \eqref{eq:polyas2} become
\begin{align}
A_{11}(z)&=e^{n(g(z)-i\theta(x)/2+\theta'(x)y/2)}a(x)\left[
\cos\left(\frac{1}{2}(n\theta(x)+in\theta'(x)y-\varphi(x))\right)
+\mathcal{O}\left(\Delta_n\right)\right]\label{eq:polyas1b}\\
\intertext{and}
A_{21}(z)&=-ie^{n(g(z)-\ell-i\theta(x)/2+\theta'(x)y/2)}a(x)
\left[\sin\left(\frac{1}{2}(n\theta(x)+in\theta'(x)y+\varphi(x))\right)
+\mathcal{O}\left(\Delta_n\right)\right].\label{eq:polyas2b}
\end{align}
In particular, for $y=0+$ we can write these in the form
\begin{align}
\label{eq:A11axis}
A_{11}(x)&=e^{n(cV(x)+\ell-\phi(x))/2}a(x)\cos\left(\frac{1}{2}
\left(n\theta(x)-\varphi(x)\right)\right) + 
\mathcal{O}\left(\Delta_ne^{n(cV(x)+\ell-\phi(x))/2}\right)\\
\intertext{and}
\label{eq:A21axis}
A_{21}(x)&=-ie^{n(cV(x)-\ell-\phi(x))/2}a(x)\sin\left(\frac{1}{2}
\left(n\theta(x)-\varphi(x)\right)\right) + 
\mathcal{O}\left(\Delta_ne^{n(cV(x)-\ell-\phi(x))/2}\right).
\end{align}

\subsection{Asymptotics of the orthogonal polynomials at the edge}
Next, suppose that $z\in S_\beta^\mathrm{II}$ (see
Figure~\ref{fig:Squares}).  Then similar calculations in which the
formulae \eqref{eq:Airybeta} and \eqref{eq:MII} are used to find
$\hat{\mathbf{D}}(z)$ yield
\begin{align}
\label{eq:AiryAsTri1}
A_{11}(z)&=
\sqrt{\pi}e^{ng(z)}\left[F_n^1(z)
\left(1+\mathcal{O}\left(\Delta_n\right)\right)
+F_n^2(z)\mathcal{O}\left(\Delta_n\right)\right],\\
\label{eq:AiryAsTri2}
A_{21}(z)&=
\sqrt{\pi}e^{n(g(z)-\ell)}\left[F_n^2(z)
\left(1+\mathcal{O}\left(\Delta_n\right)\right)+
F_n^1(z)\mathcal{O}\left(\Delta_n\right)\right],
\end{align}
where
\begin{align}
\nonumber F_n^1(z)&:=n^{1/6}w(z)\left[e^{-i\pi/3}e^{nu_\beta(z)^{3/2}/2}\mathrm{Ai}
\left(\left(\tfrac{3n}{4}\right)^{2/3}u_\beta(z)e^{2\pi i/3}\right)\right.\\
&\quad\quad\quad\quad\quad\quad\left.{}+
e^{i\pi/3}e^{-nu_\beta(z)^{3/2}/2}e^{-in\Theta(x,y)}
\mathrm{Ai}\left(\left(\tfrac{3n}{4}\right)^{2/3}u_\beta(z)e^{-2\pi i/3}\right)
\right]\\
\nonumber &\quad\quad{}+
n^{-1/6}w(z)^{-1}\left[e^{-2\pi i/3}e^{nu_\beta(z)^{3/2}/2}\mathrm{Ai}'
\left(\left(\tfrac{3n}{4}\right)^{2/3}u_\beta(z)e^{2\pi i/3}\right)\right.\\
\nonumber &\quad\quad\quad\quad\quad\quad\left.{}+
e^{2\pi i/3}e^{-nu_\beta(z)^{3/2}/2}e^{-in\Theta(x,y)}\mathrm{Ai}'
\left(\left(\tfrac{3n}{4}\right)^{2/3}u_\beta(z)e^{-2\pi i/3}\right)\right],\\
\nonumber\\
\nonumber F_n^2(z)&:=n^{1/6}w(z)\left[e^{-5\pi i/6}e^{nu_\beta(z)^{3/2}/2}
\mathrm{Ai}\left(\left(\tfrac{3n}{4}\right)^{2/3}u_\beta(z)e^{2\pi i/3}\right)
\right.\\
&\quad\quad\quad\quad\quad\quad\left.{}+
e^{-i\pi/6}e^{-nu_\beta(z)^{3/2}/2}e^{-in\Theta(x,y)}
\mathrm{Ai}\left(\left(\tfrac{3n}{4}\right)^{2/3}u_\beta(z)e^{-2\pi i/3}\right)
\right]\\
\nonumber&\quad\quad{}+
n^{-1/6}w(z)^{-1}\left[e^{-i\pi/6}e^{nu_\beta(z)^{3/2}/2}
\mathrm{Ai}'\left(\left(\tfrac{3n}{4}\right)^{2/3}u_\beta(z)e^{2\pi i/3}\right)
\right.\\
\nonumber &\quad\quad\quad\quad\quad\quad\left.{}+
e^{-5\pi i/6}e^{-nu_\beta(z)^{3/2}/2}e^{-in\Theta(x,y)}
\mathrm{Ai}'\left(\left(\tfrac{3n}{4}\right)^{2/3}u_\beta(z)e^{-2\pi i/3}\right)
\right],
\end{align}
and
\begin{equation}
w(z):=\left(\frac{3}{4}\right)^{1/6}[-h_\beta'(\beta)]^{1/6}(z-\alpha)^{1/4}.
\end{equation}

Now it will also be useful to have the asymptotic behavior of
$A_{11}(z)$ and $A_{21}(z)$ for $z\in
S_\beta^\mathrm{I}\cap\mathbb{C}_+$ (see Figure~\ref{fig:Squares}),
and for this purpose we note that for such $z$ we have
$\mathbf{B}(z)\equiv \mathbf{D}(z)$, so in place of \eqref{eq:AEDdot}
we have instead
\begin{equation}
\mathbf{A}(z)=e^{n\ell\sigma_3/2}\mathbf{E}(x,y)\dot{\mathbf{D}}(z)
e^{ng(z)\sigma_3}e^{-n\ell\sigma_3/2},\quad z=x+iy\in\mathbb{C}\setminus(\Omega_+\cup\Omega_-\cup\Omega_\alpha\cup\Omega_\beta).
\label{eq:AEDdotbeta}
\end{equation}
Therefore, for such $z$:
\begin{align}
A_{11}(z)&=\left[E_{11}(x,y)\dot{D}_{11}(z)+E_{12}(x,y)\dot{D}_{21}(z)\right]e^{ng(z)},\\
A_{21}(z)&=\left[E_{21}(x,y)\dot{D}_{11}(z)+E_{22}(x,y)\dot{D}_{21}(z)\right]
e^{n(g(z)-\ell)}.
\end{align}
Supposing that $z\in S_\beta^\mathrm{I}\cap \mathbb{C}_+$, we may now
proceed by using \eqref{eq:Airybeta} and \eqref{eq:MI} to find
$\hat{\mathbf{D}}(z)$, with the result that
\begin{align}
\label{eq:A11asymp}
A_{11}(z)&=\sqrt{\pi}e^{ng(z)}e^{nu_\beta(z)^{3/2}/2}\left[G_n^1(z)
\left(1+\mathcal{O}\left(\Delta_n\right)\right)+G_n^2(z)
\mathcal{O}\left(\Delta_n\right)\right],\\
 \label{eq:A11asymp2}
A_{21}(z)&=\sqrt{\pi}e^{n(g(z)-\ell)}e^{nu_\beta(z)^{3/2}/2}\left[G_n^2(z)
\left(1+\mathcal{O}\left(\Delta_n\right)\right)+G_n^1(z)
\mathcal{O}\left(\Delta_n\right)\right],
\end{align}
where
\begin{align}
G_n^1(z)&:=n^{1/6}w(z)\mathrm{Ai}
\left(\left(\tfrac{3n}{4}\right)^{2/3}u_\beta(z)\right)-n^{-1/6}w(z)^{-1}
\mathrm{Ai}'\left(\left(\tfrac{3n}{4}\right)^{2/3}u_\beta(z)\right),\\
G_n^2(z)&:=-in^{1/6}w(z)\mathrm{Ai}
\left(\left(\tfrac{3n}{4}\right)^{2/3}u_\beta(z)\right)-in^{-1/6}w(z)^{-1}
\mathrm{Ai}'\left(\left(\tfrac{3n}{4}\right)^{2/3}u_\beta(z)\right).
\end{align}

If we assume that $\zeta:=(3n/4)^{2/3}u_\beta(z)$ is bounded, then Property
3 of Lemma~\ref{lem:thetaextendgeneral} guarantees that
$e^{-in\Theta(x,y)}= e^{2\zeta^{3/2}/3}(1+\mathcal{O}(n^{-2/3}))$, and
so with the use of \eqref{eq:AiryIdentity} we see that
\eqref{eq:AiryAsTri1} and \eqref{eq:AiryAsTri2} agree, respectively,
with \eqref{eq:A11asymp} and \eqref{eq:A11asymp2} up to error terms;
we therefore have
\begin{align}
\label{eq:A11betaclose}
A_{11}(z)&=e^{ng(z)}\left[\sqrt{\pi}w(\beta)n^{1/6}e^{2\zeta^{3/2}/3}\mathrm{Ai}
(\zeta) + 
\mathcal{O}\left(n^{1/6}\Delta_n\right)\right]\\
\intertext{and}
\label{eq:A21betaclose}
A_{21}(z)&=e^{n(g(z)-\ell)}\left[-i\sqrt{\pi}w(\beta)n^{1/6}e^{2\zeta^{3/2}/3}
\mathrm{Ai}(\zeta) + 
\mathcal{O}\left(n^{1/6}\Delta_n\right)\right]
\end{align}
for $\zeta$ bounded with $0\le\arg(\zeta)\le\pi$, where
\begin{equation}
z=\beta+(\lambda n)^{-2/3}\zeta,\quad\quad
\lambda:=\frac{3}{4}[-h_\beta'(\beta)]^{-1}.
\end{equation}
Moreover, we may observe that for $z$ near $\beta$ , and $z \in
\mathbb{C}_{+}$, the following local expansion holds true:
\begin{equation}
g(z)+\frac{2}{3n}\zeta^{3/2}=g(z)+\frac{1}{2}u_\beta(z)^{3/2}=
\frac{cV(\beta)+\ell}{2}+
\frac{cV'(\beta)}{2}(z-\beta) +\mathcal{O}((z-\beta)^2).
\label{eq:gthetalocexp}
\end{equation}
(This is proved in the Appendix under the conditions on the external field
$V$ in force in this paper.)  Therefore, \eqref{eq:A11betaclose}
and \eqref{eq:A21betaclose} may also be written as 
\begin{align}
\label{eq:A11betacloseAGAIN}
A_{11}\left(\beta+(\lambda n)^{-2/3}\zeta\right)&=
n^{1/6}e^{n(cV(\beta)+\ell)/2}e^{n^{1/3}cV'(\beta)\lambda^{-2/3}\zeta/2}
\sqrt{\pi}w(\beta)\mathrm{Ai}(\zeta) \\
\nonumber &\quad\quad{}+
\mathcal{O}\left(n^{1/6}\Delta_ne^{n(cV(\beta)+\ell)/2}e^{n^{1/3}cV'(\beta)\lambda^{-2/3}\zeta/2}\right)\\
\intertext{and}
\label{eq:A21betacloseAGAIN}
A_{21}\left(\beta+(\lambda n)^{-2/3}\zeta\right)&=
-in^{1/6}e^{n(cV(\beta)-\ell)/2}e^{n^{1/3}cV'(\beta)\lambda^{-2/3}\zeta/2}
\sqrt{\pi}w(\beta)\mathrm{Ai}(\zeta) \\
\nonumber &\quad\quad{}+ 
\mathcal{O}\left(n^{1/6}\Delta_ne^{n(cV(\beta)-\ell)/2}
e^{n^{1/3}cV'(\beta)\lambda^{-2/3}\zeta/2}\right).
\end{align}
Here we used the fact that the error term in \eqref{eq:gthetalocexp}
is $\mathcal{O}(n^{-4/3})$ for $|\zeta|$ bounded, so the dominant
terms in the errors come from \eqref{eq:A11betaclose} and
\eqref{eq:A21betaclose}.  

\section{Asymptotics for the Derivative and Applications to Random
  Matrix Theory}
\label{sec:derivs}
We have established large-$n$ asymptotics uniform with respect to $z$
for the matrix $\mathbf{A}(z)$, for all $z \in \mathbb{C}$.  In
particular, $A_{11}(z)$ and $A_{21}(z)$ possess asymptotic
descriptions in a neighborhood of the interval $[\alpha, \beta]$.  In
this section we derive asymptotic descriptions for the derivatives
$A_{11}'(z)$ and $A_{21}'(z)$, both in ``the bulk'', i.e.\@ for
$x \in (\alpha, \beta)$ as well as near the endpoints $\alpha$ and
$\beta$.  (In fact we will only consider the endpoint $\beta$.)

Our aim is to obtain derivative asymptotics in order to establish bulk
and edge universality for unitarily invariant matrix models with
external fields that possess only two Lipschitz continuous
derivatives, as described in the Introduction.  It is by now
well-known (see, for example, \cite{Mehta}) that if one obtains an
asymptotic description of the orthogonal polynomials and their
derivatives of the form which we obtain here, then the corresponding
asymptotic formulae for the reproducing kernels follows and exhibits
universality (independence of details of the external field $V$), and
so we will omit these details.

\subsection{Analysis of derivatives in the bulk.}
Let $x\in(\alpha,\beta)$ be bounded away from the endpoints as $n\to\infty$.
Since $A_{11}(z)$ and $A_{21}(z)$ are polynomials and hence entire functions,
we may express their derivatives at $x$ by Cauchy's integral formula:
\begin{equation}
A_{j1}'(x)=\frac{1}{2\pi i}\oint\frac{A_{j1}(s)\,ds}{(s-x)^2},\quad
j=1,2.
\end{equation}
Moreover, since (see \eqref{eq:Afirstcolumn}) $A_{11}(z)$ and
$iA_{21}(z)$ have real coefficients, we may use complex-conjugation symmetry
and the reality of $x$ to write
\begin{equation}
A_{11}'(x)=\frac{1}{\pi}\mathrm{Im}\left(\int_\Gamma
\frac{A_{11}(s)\,ds}{(s-x)^2}\right)\quad\text{and}\quad
A_{21}'(x)=\frac{1}{\pi i}\mathrm{Re}\left(\int_\Gamma
\frac{A_{21}(s)\,ds}{(s-x)^2}\right),
\end{equation}
where $\Gamma$ is any path of integration that begins on the real axis
to the right of $x$ and terminates on the real axis to the left of
$x$, and that avoids the singularity at $x$ by passing through the
upper half-plane.  For our calculations, we will take the path
$\Gamma$ to be a semicircle of radius $n^{-1}$ centered at $s=x$:
$s=x+n^{-1}e^{i\omega}$ for $0<\omega<\pi$.  Thus we have
\begin{align}
\label{eq:A11primeraw}
A_{11}'(x)&=\frac{n}{\pi}\mathrm{Im}\left(\int_0^\pi A_{11}(x+n^{-1}e^{i\omega})
ie^{-i\omega}\,d\omega\right),\\
\label{eq:A21primeraw}
A_{21}'(x)&=\frac{n}{\pi i}\mathrm{Re}\left(\int_0^\pi A_{21}(x+n^{-1}e^{i\omega})
ie^{-i\omega}\,d\omega\right).
\end{align}

We will now substitute from \eqref{eq:polyas1} and \eqref{eq:polyas2},
but first we write them in a more suitable form.  Since $a(z)$ is bounded
for $z\in\Gamma$, and since $g(x+iy)$,
$\Theta(x,y)$, and $\varphi(x+iy)$ are all differentiable, Taylor expansion
about $x=0$ shows that for $z\in\Gamma$ we have both
\begin{equation}
A_{11}(z)=M_1(x,y)+\mathcal{O}\left(\Delta_ne^{ng_+(x)}\right)\quad
\text{and}\quad
e^{n\ell}A_{21}(z)=M_2(x,y)+\mathcal{O}\left(\Delta_ne^{ng_+(x)}\right),
\label{eq:A11A21onGamma}
\end{equation}
where $g_+(x)$ is the boundary value taken
by $g(z)$ as $z\to x$ from $\mathbb{C}_+$, and where ($z=x+iy$)
\begin{align}
M_1(x,y)&:=e^{n(g(z)-i\Theta(x,y)/2)}a(z)\cos\left(\frac{1}{2}(n\Theta(x,y)-\varphi(z))\right)\\
\intertext{and}
M_2(x,y)&:=-ie^{n(g(z)-i\Theta(x,y)/2)}a(z)\sin\left(\frac{1}{2}(n\Theta(x,y)+
\varphi(z))\right).
\end{align}
One important observation is that $M_1(x,y)$ and $iM_2(x,y)$ have real
boundary values taken on the real axis from the upper half-plane.
Indeed, the analytic functions $a(z)$ and $\varphi(z)$ are real for
real $z$, Property 1 of Lemma~\ref{lem:thetaextendgeneral} implies
$\Theta(x,0)=\theta(x)\in\mathbb{R}$, and furthermore by
\eqref{eq:thetadefine},
\begin{equation}
2g_+(x)-i\Theta(x,0)  =2g_+(x)-i\theta(x)=g_+(x)+g_-(x)
\in\mathbb{R},\quad x\in (\alpha,\beta).
\end{equation}
Using \eqref{eq:A11A21onGamma} in \eqref{eq:A11primeraw} and
\eqref{eq:A21primeraw} gives
\begin{align}
\label{eq:A11primenext}
A_{11}'(x)&=\frac{n}{\pi}\mathrm{Im}\left(\int_0^\pi
M_1(x+n^{-1}\cos(\omega),n^{-1}\sin(\omega))ie^{-i\omega}\,d\omega
\right)+\mathcal{O}\left(n\Delta_ne^{ng_+(x)}\right),\\
\label{eq:A21primenext}
e^{n\ell}A_{21}'(x)&=\frac{n}{\pi i}\mathrm{Re}\left(\int_0^\pi
M_2(x+n^{-1}\cos(\omega),n^{-1}\sin(\omega))ie^{-i\omega}\,d\omega
\right)+\mathcal{O}\left(n\Delta_ne^{ng_+(x)}\right).
\end{align}

We begin our analysis by integrating by parts:  since for $M=M_1$ or $M=M_2$,
\begin{equation}
\begin{split}
\int_0^\pi M(x+n^{-1}\cos(\omega),n^{-1}\sin(\omega))ie^{-i\omega}\,d\omega
&=
-\int_0^\pi M(x+n^{-1}\cos(\omega),n^{-1}\sin(\omega))\frac{d}{d\omega}(e^{-i\omega})\,d\omega\\
&= M(x-n^{-1},0)+M(x+n^{-1},0)\\
&\quad{}+\frac{1}{n}\int_0^\pi
M_y(x+n^{-1}\cos(\omega),n^{-1}\sin(\omega))\cos(\omega)e^{-i\omega}\,d\omega\\
&\quad{}-
\frac{1}{n}\int_0^\pi
M_x(x+n^{-1}\cos(\omega),n^{-1}\sin(\omega))\sin(\omega)e^{-i\omega}\,d\omega,
\end{split}
\end{equation}
we see that upon taking the imaginary part (for $A_{11}'(x)$) or real
part (for $A_{21}'(x)$), the boundary terms vanish:
\begin{multline}
n\mathrm{Im}\left(\int_0^\pi M_1(x+n^{-1}\cos(\omega),n^{-1}\sin(\omega))
ie^{-i\omega}\,d\omega\right)\\
\begin{aligned}
&=
\mathrm{Im}\left(\int_0^\pi M_{1y}(x+n^{-1}\cos(\omega),n^{-1}\sin(\omega))
\cos(\omega)e^{-i\omega}\,d\omega\right)\\
&\quad\quad{}-  
\mathrm{Im}\left(\int_0^\pi M_{1x}(x+n^{-1}\cos(\omega),n^{-1}\sin(\omega))
\sin(\omega)e^{-i\omega}\,d\omega\right),
\end{aligned}
\end{multline}
and
\begin{multline}
n\mathrm{Re}\left(\int_0^\pi M_2(x+n^{-1}\cos(\omega),n^{-1}\sin(\omega))
ie^{-i\omega}\,d\omega\right)\\
\begin{aligned}
&=
\mathrm{Re}\left(\int_0^\pi M_{2y}(x+n^{-1}\cos(\omega),n^{-1}\sin(\omega))
\cos(\omega)e^{-i\omega}\,d\omega\right)\\
&\quad\quad{}-  
\mathrm{Re}\left(\int_0^\pi M_{2x}(x+n^{-1}\cos(\omega),n^{-1}\sin(\omega))
\sin(\omega)e^{-i\omega}\,d\omega\right).
\end{aligned}
\end{multline}
Moreover, using \eqref{eq:CR} to eliminate $M_{jy}$ in favor of
$M_{jx}$ and $\dbar M_j$ for $j=1,2$, these become
\begin{multline}
n\mathrm{Im}\left(\int_0^\pi M_1(x+n^{-1}\cos(\omega),n^{-1}\sin(\omega))
ie^{-i\omega}\,d\omega\right)\\
\begin{aligned}
&=\mathrm{Re}\left(\int_0^\pi M_{1x}(x+n^{-1}\cos(\omega),n^{-1}\sin(\omega))
\,d\omega\right)\\
&\quad\quad{}-  
2\mathrm{Re}\left(\int_0^\pi \dbar M_1(x+n^{-1}\cos(\omega),n^{-1}\sin(\omega))
\cos(\omega)e^{-i\omega}\,d\omega\right),
\end{aligned}
\end{multline}
and
\begin{multline}
n\mathrm{Re}\left(\int_0^\pi M_2(x+n^{-1}\cos(\omega),n^{-1}\sin(\omega))
ie^{-i\omega}\,d\omega\right)\\
\begin{aligned}
&=
-\mathrm{Im}\left(\int_0^\pi M_{2x}(x+n^{-1}\cos(\omega),n^{-1}\sin(\omega))
\,d\omega\right)\\
&\quad\quad{}+  
2\mathrm{Im}\left(\int_0^\pi \dbar M_2(x+n^{-1}\cos(\omega),n^{-1}\sin(\omega))
\cos(\omega)e^{-i\omega}\,d\omega\right).
\end{aligned}
\end{multline}

By splitting cosines and sines into exponentials, we may
write $M_1(x,y)$ and $M_2(x,y)$ in the form
\begin{align}
M_1(x,y)&:=u^-(z)e^{ng(z)}+u^+(z)e^{n(g(z)-i\Theta(x,y))}\\
\intertext{and}
M_2(x,y)&:=u^-(z)e^{n(g(z)-i\Theta(x,y))} -u^+(z)e^{ng(z)},
\end{align}
where
\begin{equation}
u^\pm(z):=\frac{1}{2}a(z)e^{\pm i\varphi(z)/2}.
\end{equation}
One reason for writing $M_1(x,y)$ and $M_2(x,y)$ in this way is to
explicitly display their dependence on $n$; indeed $u^\pm(z)$, $g(z)$,
and $\Theta(x,y)$ are independent of $n$.  Using the fact that
$u^\pm(z)$ and $g(z)$ are all analytic functions of $z=x+iy$ on the
contour $\Gamma$, we have
\begin{equation}
\dbar M_1(x,y)=-inu^+(z)e^{n(g(z)-i\Theta(x,y))}\dbar\Theta(x,y)\quad\text{and}
\quad
\dbar M_2(x,y)=-inu^-(z)e^{n(g(z)-i\Theta(x,y))}\dbar\Theta(x,y).
\end{equation}
Now, $u^\pm(z)$ are bounded functions, and according to Property 2 of
Lemma~\ref{lem:thetaextendgeneral}, $\dbar\Theta(x,y)=O(n^{-1})$ for
$x+iy\in\Gamma$.  Since also $g(z)$ and $\Theta(x,y)$ are
differentiable and (by Property 1 of
Lemma~\ref{lem:thetaextendgeneral})
$\Theta(x,0)=\theta(x)\in\mathbb{R}$, we finally learn that
\begin{equation}
\dbar M_1(x,y)=\mathcal{O}\left(e^{ng_+(x)}\right)\quad\text{and}\quad
\dbar M_2(x,y)=\mathcal{O}\left(e^{ng_+(x)}\right),\quad x+iy\in\Gamma.
\end{equation}
Therefore, \eqref{eq:A11primenext} and \eqref{eq:A21primenext} may 
be written in the form
\begin{align}
\label{eq:A11primefurther}
A_{11}'(x)&=\frac{1}{\pi}\mathrm{Re}\left(\int_0^\pi M_{1x}(x+n^{-1}\cos(\omega),
n^{-1}\sin(\omega))\,d\omega\right)+\mathcal{O}\left(e^{ng_+(x)}\right)\\
\intertext{and}
\label{eq:A21primefurther}
e^{n\ell}A_{21}'(x)&=\frac{i}{\pi}\mathrm{Im}\left(\int_0^\pi
M_{2x}(x+n^{-1}\cos(\omega),n^{-1}\sin(\omega))\,d\omega\right)
+\mathcal{O}\left(e^{ng_+(x)}\right).
\end{align}
Now, each term in $M_1(x,y)$ and $M_2(x,y)$ is of the form
$u^\pm(z)e^{n(g(z)-i\sigma\Theta(x,y))}$ where either $\sigma=0$ or
$\sigma=1$, and
\begin{equation}
\frac{\partial}{\partial x}\left[u^\pm(z)e^{n(g(z)-i\sigma\Theta(x,y))}\right]=
\left[u^{\pm\prime}(z)+nu^\pm(z)\left(g'(z)-i\sigma\Theta_x(x,y)
\right)\right]e^{n(g(z)-i\sigma\Theta(x,y))},
\end{equation}
so evaluating for $z=x+n^{-1}e^{i\omega}$ we may expand the result for 
large $n$.  Using Property 1 of Lemma~\ref{lem:thetaextendgeneral}
to assert second-order differentiability of $\Theta(x,y)$ in the bulk
and Property 2 of the same Lemma to guarantee that $\dbar\Theta(x,0)=0$,
we finally arrive at
\begin{equation}
\left.
\frac{\partial}{\partial x}\left[u^\pm(z)e^{n(g(z)-i\sigma\Theta(x,y))}\right]\right|_{z=x+n^{-1}e^{i\omega}}=nu^\pm(x)\left(g_+'(x)-i\sigma\theta'(x)\right)
e^{n(g_+(x)-i\sigma\theta(x))}e^{qe^{i\omega}} + 
\mathcal{O}\left(e^{ng_+(x)}\right),
\end{equation}
where $q:=g_+'(x)-i\sigma\theta'(x)$ is independent of $\omega$.
Since for any complex number $q$
\begin{equation}
\int_0^\pi e^{qe^{i\omega}}\,d\omega = \pi,
\end{equation}
we obtain from \eqref{eq:A11primefurther} and \eqref{eq:A21primefurther}
that 
\begin{align}
\nonumber
A_{11}'(x)&=\mathrm{Re}\left(nu^-(x)g_+'(x)e^{ng_+(x)}+nu^+(x)\left(g_+'(x)-i\theta'(x)\right)e^{n(g_+(x)-i\theta(x))}\right)+\mathcal{O}\left(e^{ng_+(x)}\right)\\
&=\mathrm{Re}\left(\frac{d}{dx}M_1(x,0+)\right) +
\mathcal{O}\left(e^{ng_+(x)}\right)\\
\nonumber
&=\frac{d}{dx}\left[e^{n(cV(x)+\ell-\phi(x))/2}a(x)\cos\left(\frac{1}{2}(n\theta(x)-\varphi(x))\right)\right]+\mathcal{O}\left(e^{n(cV(x)+\ell-\phi(x))/2}\right)\\
\intertext{and}
\nonumber
A_{21}'(x)&=i\mathrm{Im}\left(nu^-(x)\left(g_+'(x)-i\theta'(x)\right)
e^{n(g_+(x)-i\theta(x)-\ell)}-nu^+(x)g_+'(x)e^{n(g_+(x)-\ell)}\right) + 
\mathcal{O}\left(e^{n(g_+(x)-\ell)}\right)\\
&=i\mathrm{Im}\left(e^{-n\ell}\frac{d}{dx}M_2(x,0+)\right) +
\mathcal{O}\left(e^{n(g_+(x)-\ell)}\right)\\
\nonumber 
&= \frac{d}{dx}\left[-ie^{n(cV(x)-\ell-\phi(x))/2}a(x)\sin\left(\frac{1}{2}(n\theta(x)+\varphi(x))\right)\right] + \mathcal{O}\left(e^{n(cV(x)-\ell-\phi(x))/2}\right).
\end{align}
Comparing these results with \eqref{eq:A11axis} and \eqref{eq:A21axis}
shows that the asymptotic formulae for the derivatives of the orthogonal 
polynomials on the real axis in the bulk may be obtained from the corresponding
asymptotic formulae for the polynomials themselves by differentiating the
leading terms.

\subsection{Analysis of derivatives at the edge}
We will now apply similar considerations to the asymptotic formulae
\eqref{eq:A11betaclose}--\eqref{eq:A21betaclose}, to obtain
asymptotics for derivatives of the orthogonal polynomials that are
valid for $z$ in a vicinity of the endpoints $\beta$ and $\alpha$.  We
will present the details for the endpoint $z = \beta$, as the argument
for the behavior near $z = \alpha$ is entirely similar.  More
precisely, our aim is to establish asymptotic formulae for the quantities
\begin{equation}
  \frac{d}{d \zeta} A_{11}\left(\beta + (\lambda n)^{-2/3}\zeta 
  \right) \quad\text{and}\quad
  \frac{d}{d \zeta} A_{21}\left(\beta + (\lambda n)^{-2/3}\zeta
 \right),\quad\quad\lambda:=\frac{3}{4}[-h_\beta'(\beta)]^{-1},
\end{equation}
which are what one needs to establish universality of the distribution of
the largest eigenvalue in Hermitian random matrix theory.  

Let
\begin{align}
\tau_1(\zeta)&:=e^{-n(cV(\beta)+\ell)/2}e^{-n^{1/3}cV'(\beta)\lambda^{-2/3}\zeta/2}
A_{11}\left(\beta+(\lambda n)^{-2/3}\zeta\right)\\
\tau_2(\zeta)&:=e^{-n(cV(\beta)-\ell)/2}e^{-n^{1/3}cV'(\beta)\lambda^{-2/3}
\zeta/2}
A_{21}\left(\beta+(\lambda n)^{-2/3}\zeta\right).
\end{align}
According to \eqref{eq:A11betacloseAGAIN} and
\eqref{eq:A21betacloseAGAIN}, these may be expressed as
\begin{align}
\label{eq:f1expand}
\tau_1(\zeta)&=
n^{1/6}\sqrt{\pi}w(\beta)\mathrm{Ai}(\zeta) 
+
\mathcal{O}\left(n^{1/6}\Delta_n\right)\\
\intertext{and}
\label{eq:f2expand}
\tau_2(\zeta)&=
-in^{1/6}\sqrt{\pi}w(\beta)
\mathrm{Ai}(\zeta) 
+\mathcal{O}\left(n^{1/6}\Delta_n\right)
\end{align}
for $|\zeta|$ bounded.  

Since $\tau_1(\zeta)$ and $\tau_2(\zeta)$ are entire functions that are real
for real $\zeta$, just as in the analysis in the bulk we may express
the derivatives $\tau_1'(\zeta)$ and $\tau_2'(\zeta)$ in terms of Cauchy's
formula as
\begin{align}
\tau_1'(\zeta)&=\frac{1}{\pi}\mathrm{Im}\left(\int_\Gamma
\frac{\tau_1(\xi)\,d\xi}{(\xi-\zeta)^2}\right)
\\
\intertext{and}
\tau_2'(\zeta)&=\frac{1}{\pi i}\mathrm{Re}\left(\int_\Gamma
\frac{\tau_2(\xi)\,d\xi}{(\xi-\zeta)^2}\right)
\end{align}
as long as $\zeta\in\mathbb{R}$, where $\Gamma$ is any path in the
upper half-plane from the real axis to the right of $\zeta$ to another
point on the real axis to the left of $\zeta$.
But, since the dominant terms in \eqref{eq:f1expand} and \eqref{eq:f2expand}
are entire functions of $\zeta$, and since $|\xi-\zeta|$ is bounded away
from zero on the contour $\Gamma$ of finite length, it follows from
a residue calculation that
\begin{align}
\tau_1'(\zeta)&=\frac{d}{d\zeta}\left(n^{1/6}\sqrt{\pi}w(\beta)\mathrm{Ai}
(\zeta) 
 \right)+\mathcal{O}\left(n^{1/6}\Delta_n\right)\\
\intertext{and}
\tau_2'(\zeta)&=\frac{d}{d\zeta}\left(-in^{1/6}\sqrt{\pi}w(\beta)
\mathrm{Ai}(\zeta)\right)
+\mathcal{O}\left(n^{1/6}\Delta_n\right).
\end{align}
Therefore,
\begin{align}
\frac{d}{d\zeta}A_{11}\left(\beta+(\lambda n)^{-2/3}\zeta\right)&=
\frac{1}{2}n^{1/2}cV'(\beta)\lambda^{-2/3}
e^{n(cV(\beta)+\ell)/2}e^{n^{1/3}cV'(\beta)\lambda^{-2/3}\zeta/2}
\sqrt{\pi}w(\beta)\mathrm{Ai}(\zeta)\\
\nonumber&\quad\quad{}
+\mathcal{O}\left(n^{1/2}\Delta_ne^{n(cV(\beta)+\ell)/2}e^{n^{1/3}cV'(\beta)\lambda^{-2/3}\zeta/2}\right)\\
\frac{d}{d\zeta}A_{21}\left(\beta+(\lambda n)^{-2/3}\zeta\right)&=
-i\frac{1}{2}n^{1/2}cV'(\beta)\lambda^{-2/3}
e^{n(cV(\beta)-\ell)/2}e^{n^{1/3}cV'(\beta)\lambda^{-2/3}\zeta/2}
\sqrt{\pi}w(\beta)\mathrm{Ai}(\zeta)\\
\nonumber&\quad\quad{}
+\mathcal{O}\left(n^{1/2}\Delta_ne^{n(cV(\beta)-\ell)/2}
e^{n^{1/3}cV'(\beta)\lambda^{-2/3}\zeta/2}\right).
\end{align}
These may also be written as
\begin{align}
\frac{d}{d\zeta}A_{11}\left(\beta+(\lambda n)^{-2/3}\zeta\right)&=
\frac{d}{d\zeta}\left[n^{1/6}e^{n(cV(\beta)+\ell)/2}e^{n^{1/3}cV'(\beta)\lambda^{-2/3}\zeta/2}
\sqrt{\pi}w(\beta)\mathrm{Ai}(\zeta)\right]\\
\nonumber&\quad\quad{}
+\mathcal{O}\left(n^{1/2}\Delta_ne^{n(cV(\beta)+\ell)/2}e^{n^{1/3}cV'(\beta)\lambda^{-2/3}\zeta/2}\right)\\
\frac{d}{d\zeta}A_{21}\left(\beta+(\lambda n)^{-2/3}\zeta\right)&=
\frac{d}{d\zeta}\left[-in^{1/6}e^{n(cV(\beta)-\ell)/2}
e^{n^{1/3}cV'(\beta)\lambda^{-2/3}\zeta/2}
\sqrt{\pi}w(\beta)\mathrm{Ai}(\zeta)\right]\\
\nonumber&\quad\quad{}
+\mathcal{O}\left(n^{1/2}\Delta_ne^{n(cV(\beta)-\ell)/2}
e^{n^{1/3}cV'(\beta)\lambda^{-2/3}\zeta/2}\right),
\end{align}
which, upon comparing with \eqref{eq:A11betacloseAGAIN} and
\eqref{eq:A21betacloseAGAIN}, show that asymptotic formulae for
derivatives valid at the edge may be obtained by differentiating the
leading terms of the corresponding formulae for the polynomials
themselves.

\appendix
\section*{Appendix:  Convex External Fields}
\label{Sec:Convex}
Everywhere in this Appendix we shall assume that (i) the external
field grows sufficiently rapidly as $|x| \to \infty$, and (ii) the
external field is strictly convex, and possesses $d$ continuous
derivatives, with $d \ge 2$.  In a separate discussion below, we will
consider the specific situation that $V$ possesses just two Lipschitz
continuous derivatives.  The main results are summarized in
Lemma~\ref{lem:Lem1} at the end of this Appendix.

The assumed growth and strict convexity of $V(x)$ and the positivity
of $c$ implies that $\mu_*$ is compactly supported and absolutely
continuous with respect to Lebesgue measure, with support consisting
of a single interval $[\alpha,\beta]$ for some real $\alpha<\beta$.
To obtain a formula for $\mu_*$ in this case, we consider the
auxiliary function $g(z)$ defined in \eqref{eq:gdef}, analytic for
$z\in\mathbb{C}\setminus (-\infty,\beta]$ (in the present case the
integral is taken over the single interval $[\alpha, \beta]$).  In
terms of $g(z)$ the variational condition \eqref{eq:ELinside} becomes
\eqref{eq:ELinsideSpecial} which we rewrite here:
\begin{equation}
cV(x)-\left(g_+(x)+g_-(x)\right)=- \ell,\quad\quad
\alpha<x<\beta,
\end{equation}
where $g_+(x)$ and $g_-(x)$ denote the boundary values taken by $g(z)$
as $z\rightarrow x$ with $z\in \mathbb{C}_+$ and $z\in\mathbb{C}_-$
respectively.

Assuming that differentiation commutes with taking boundary values
\eqref{eq:ELinsideSpecial} and \eqref{eq:DifferenceOutside} imply that
\begin{equation}
\begin{array}{rcll}
g'_+(x)+g'_-(x)&=&cV'(x),&\quad\quad x\in (\alpha,\beta)\\\\
g'_+(x)-g'_-(x)&=&0, &\quad\quad x \in \mathbb{R} \setminus (\alpha,\beta).
\end{array}
\label{eq:gpeqK}
\end{equation}
In particular, $g'(z)$ is an analytic function for
$z\in\mathbb{C}\setminus[\alpha,\beta]$.  To find $g'(z)$ from these
conditions, we introduce the function $R(z)$ satisfying $R(z)^2 =
(z-\alpha)(z-\beta)$ such that $R(z)$ is analytic for
$z\in\mathbb{C}\setminus[\alpha,\beta]$ and $R(z)=z+\mathcal{O}(1)$ as
$z\to\infty$.  Setting $g'(z)=f(z)R(z)$ for some new unknown
function $f(z)$, we find that like $g'(z)$ and $R(z)$, $f(z)$ is an
analytic function of $z$ for $z\in\mathbb{C}\setminus[\alpha,\beta]$,
and that its boundary values taken on $(\alpha,\beta)$ from the upper
and lower half-planes satisfy the relation
\begin{equation}
f_+(x)-f_-(x) = \frac{cV'(x)}{R_+(x)},\quad\quad\alpha<x<\beta\,.
\end{equation}
Since $g'(z)=1/z + \mathcal{O}(1/z^2)$ as $z\to\infty$, it follows that
$f(z)=1/z^2 + \mathcal{O}(1/z^3)$ as $z\to\infty$, and hence
\begin{equation}
f(z)=\frac{c}{2\pi i}\int_\alpha^\beta \frac{V'(s)\,ds}{(s-z)R_+(s)}.
\label{eq:fformula}
\end{equation}
Considering \eqref{eq:fformula} for large $z$,
we see that 
\begin{equation}
c\int_\alpha^\beta \frac{V'(s)\,ds}{R_+(s)} =0,\quad\quad
c\int_\alpha^\beta \frac{sV'(s)\,ds}{R_+(s)} = -2\pi i.
\label{eq:endpoints}
\end{equation}
These two equations determine the endpoints $\alpha$ and $\beta$.  
With $\alpha$ and $\beta$ chosen so that the equations
\eqref{eq:endpoints} hold, we may obtain a formula, in terms of a
Cauchy principal value integral, for the density
$\psi(x)$ of the equilibrium measure $\mu_*$ valid in the support interval
$\alpha\le x\le \beta$:
\begin{equation}
\psi(x) =  \frac{cR_+(x)}{2\pi^2}\dashint_\alpha^\beta
\frac{V'(s)\,ds}{(s-x)R_+(s)}.
\label{eq:psiformula}
\end{equation}

Defining a real-valued function $h(x)$ for $x\in\mathbb{R}$ by the
formula
\begin{equation}
h(x):=\frac{i}{\pi}
\int_\alpha^\beta\frac{V'(s)-V'(x)}{s-x}\frac{ds}{R_+(s)},
\label{eq:hdef}
\end{equation}
it is straightforward to verify that
\begin{equation}
\label{eq:psihform}
\psi(x)=\frac{c}{2\pi i}R_+(x)h(x),\quad\quad\alpha<x<\beta,
\end{equation}
and that
\begin{equation}
\label{eq:gpmpmfull}
\phi(x):=cV(x) + \ell-g_+(x)-g_-(x)=
\begin{cases}
\displaystyle -c\int_x^\alpha R(s)h(s)\,ds,&\quad x<\alpha
\\\\
\displaystyle c\int_\beta^xR(s)h(s)\,ds,&\quad x>\beta.
\end{cases}
\end{equation}

From the assumption that $V(x)$ is $d$ times continuously
differentiable, we see that $h(x)$ is $d-2$ times continuously
differentiable.  Also, since $i/R_+(x)$ is positive for
$\alpha<x<\beta$ and
\begin{equation}
\frac{V'(s)-V'(z)}{s-z}=\int_0^1V''(ts+(1-t)z)\,dt,
\end{equation}
the assumption of convexity of $V(x)$ implies that $h(x)$ is strictly
positive for all $x\in\mathbb{R}$.  
Also, since $g_{+}(\beta)
- g_{-}(\beta)=0$ and since for $\alpha<x<\beta$ we have
$g_{+}'(x)-g_{-}'(x)=-2\pi i\psi(x)$, we conclude that
\begin{equation}
\theta(x):=-i (g_{+}(x) - g_{-}(x))=-ic\int_x^\beta R_+(s)h(s)\,ds,
\quad\quad\alpha<x<\beta.
\label{eq:thetaexpression}
\end{equation}

We may now use \eqref{eq:gpmpmfull} to verify (from the positivity of
$h(x)$ and the facts that $R(x)>0$ for $x>\beta$ while $R(x)<0$ for
$x<\alpha$) the strict inequality $\phi(x)>0$ for $x<\alpha$ and
$x>\beta$, as required by Condition~\ref{cond:strict}.  Also, from
\eqref{eq:thetaexpression} we see that $0<\theta(x)<2\pi$ and
$\psi(x)>0$ (i.e.\@ $\theta'(x)<0$) both hold strictly for
$\alpha<x<\beta$ as required by Condition~\ref{cond:strict}.
Moreover, since $h(s)$ is $d-2$ times continuously differentiable,
$\theta(x)$ is $d-1$ times continuously differentiable for
$\alpha<x<\beta$.

One may prove \eqref{eq:gthetalocexp} as follows.  From the identity
$g'(z) = R(z)f(z)$, we have that for $z$ near $\beta$,
\begin{multline}
f(z) = \frac{ c V'(\beta)}{2 R(z)} + 
\frac{c}{2 \pi i} \int_{\alpha}^{\beta} 
\frac{ V'(s) - V'(\beta) }{(s - \beta) R_{+}(s)}\, ds 
+ \frac{c V''(\beta)}{2 R(z)}(z-\beta) \\
+ 
\frac{ c }{2 \pi i}( z - \beta)  \int_{\alpha}^{\beta} \left( 
\frac{V'(s) - V'(\beta)}{s - \beta}  - V''(\beta) \right) \frac{ds }{ ( s - z) R_{+}(s) }.
\end{multline}
Recalling that $2g(\beta) - c V(\beta) - \ell = 0$, we learn that 
\begin{multline}
\label{eq:geqlocbet}
g(z) = \frac{cV(\beta)}{2}  + \frac{\ell}{2}+ 
\frac{cV'(\beta)}{2}(z - \beta) + \left( 
 \frac{c}{2 \pi i} \int_{\alpha}^{\beta} \frac{ V'(s) - V'(\beta) }{(s - \beta) R_{+}(s)}\, ds
\right) \int_{\beta}^{z} R(s)\, ds +  \frac{cV''(\beta) }{4} 
( z - \beta)^{2}  \\
+ 
\int_{\beta}^{z} \left[ \frac{ c ( z' - \beta) R(z') }{2 \pi i} \int_{\alpha}^{\beta} \left( 
\frac{V'(s) - V'(\beta)}{s - \beta}  - V''(\beta) \right) \frac{ds }{ ( s - z') R_{+}(s) } \right]\, dz'.
\end{multline}
Now, recalling that 
\begin{equation}
\frac{1}{2}u_\beta(z)^{3/2} = -\frac{h_\beta'(\beta)}{2}(z-\beta)^{3/2},
\label{eq:otherterm}
\end{equation}
and
\begin{equation}
h_\beta(x)=\frac{\theta(x)}{\sqrt{\beta-x}}=-i\frac{g_+(x)-g_-(x)}{\sqrt{\beta-x}},\quad \alpha<x<\beta,
\end{equation}
we may use \eqref{eq:geqlocbet} to obtain the value of $h_\beta'(\beta)$:
\begin{equation}
h_\beta'(\beta)=\frac{2c \sqrt{ \beta - \alpha} }{3 \pi i} \int_{\alpha}^{\beta} \frac{ V'(s) - V'(\beta) }{(s - \beta) R_{+}(s)}\, ds<0,
\end{equation}
and then the expansion \eqref{eq:gthetalocexp} follows by adding
\eqref{eq:geqlocbet} and \eqref{eq:otherterm}.

\subsection*{Regularity for $V''$ Lipschitz} \label{sec:app:reg} It is
also straightforward to derive a formula for $g''(z)$.  This is a
useful exercise if one assumes only that $V''$ is Lipschitz
continuous, which we do throughout this subsection.

By further differentiation, \eqref{eq:ELinsideSpecial} and
\eqref{eq:DifferenceOutside} imply that
\begin{equation}
\begin{array}{rcll}
g''_{+}(x)+g''_{-}(x)&=&cV''(x),&\quad\quad x\in (\alpha,\beta)\\\\
g''_{+}(x)-g''_{-}(x)&=&0, &\quad\quad x \in \mathbb{R} \setminus 
(\alpha,\beta).
\end{array}
\end{equation}
In particular, $g''(z)$ is an analytic function for
$z\in\mathbb{C}\setminus[\alpha,\beta]$.  To find $g''(z)$ from these
conditions, we set $g''(z)=F(z)/R(z)$ for some new unknown function
$F(z)$, we find that like $g''(z)$ and $R(z)$, $F(z)$ is an analytic
function of $z$ for $z\in\mathbb{C}\setminus[\alpha,\beta]$, and that
its boundary values taken on $(\alpha,\beta)$ from the upper and lower
half-planes satisfy the relation
\begin{equation}
F_+(x)-F_-(x) = cV''(x)R_+(x),\quad\alpha<x<\beta\,.
\end{equation}
It follows that $F(z)$ is a function of the form
\begin{equation}
F(z)=\frac{c}{2\pi i}\int_\alpha^\beta \frac{V''(s)R_{+}(s) \,ds}{(s-z)}.
\label{eq:FFormula}
\end{equation}

Since $\psi'(x) = -2 \pi i (g''_{+}(x) - g''_{-}(x))$ for
$\alpha<x<\beta$, we obtain
\begin{equation}
\begin{split}
\psi'(x)&= -\frac{ 2 \pi i}{R_{+}(x)} \left(  F_{+}(x) + F_{-}(x) \right) \\
&=-\frac{2c}{R_{+}(x)} 
 \int_\alpha^\beta \frac{V''(s) - V''(x)}{s-x}R_{+}(s) \,ds +
 \frac{ 2 \pi i c V''(x)}{R_{+}(x)} \left(  x - \frac{ \alpha + \beta}{2} \right),
\end{split}
\end{equation}
from which it follows that $\sqrt{ (x - \alpha) ( \beta - x)} \psi'(x)$ 
is bounded \emph{uniformly} for $\alpha<x<\beta$.  In other words,
\begin{equation}
\label{eq:psiprimebd}
\left| \psi'(x) \right| \le \frac{ C}{ \sqrt{ ( x - \alpha)(\beta-x)}},
\quad\quad \alpha<x<\beta.
\end{equation}

From the above considerations we have the following formula, which is
valid for $x< \alpha$ and also for $x > \beta$:
\begin{equation}
\phi''(x) = \frac{ic}{\pi R(x)} 
\int_{\alpha}^{\beta} \frac{ V''(s) - V''(x)}{s - x} R_{+}(s) ds + 
\frac{c V''(x)}{R(x)} \left( x - \frac{ \alpha + \beta}{2} \right),
\end{equation}
which in turn implies that $|x - \alpha|^{1/2}|x - \beta|^{1/2}
\phi''(x)$ is bounded on any compact subset of $\mathbb{R}$.  (Of
course, this quantity diverges as $|x| \to \infty$, with $x \in
\mathbb{R}$.)

On the other hand, one may also consider the quantity $\hat{F}(z) :=
g(z) / R(z)$, for which the following identity can be shown to hold
true:
\begin{equation}
\hat{F}_{\pm}(z) =\left(  \int_{-\infty}^{\alpha} \frac{ds}{R(s) ( s -z )} \right)_{\pm}+ \frac{c}{2 \pi i} \int_{\alpha}^{\beta} \frac{V(s) - V(z)}{s-z} \frac{ds}{R_{+}(s)} + \frac{c V(z) + \ell}{2R_{\pm}(z)}, \quad z \in \mathbb{R}.
\end{equation}
One direct consequence of this last identity is that
\begin{equation}
\label{eq:gpmgm}
g_{+}(z) -g_{-}(z) = R_{+}(z) \left[  2
\int_{-\infty}^{\alpha} \frac{ds}{R(s) ( s -z )} + \frac{c}{ \pi i} \int_{\alpha}^{\beta} \frac{V(s) - V(z)}{s-z} \frac{ds}{R_{+}(s)}
\right],\quad z \in (\alpha, \beta],
\end{equation}
with both quantities appearing within the square brackets on the right
hand side of \eqref{eq:gpmgm} possessing at least one Lipschitz
continuous derivative for all $z \in [\alpha+\epsilon,\beta]$ for any
$\epsilon > 0$.  Similarly, we have
\begin{equation}
\label{eq:gpPgm}
g_{+}(z) +g_{-}(z) -c V(z) - \ell= R(z) \left[  2
\int_{-\infty}^{\alpha} \frac{ds}{R(s) ( s -z )} + \frac{c }{ \pi i} \int_{\alpha}^{\beta} \frac{V(s) - V(z)}{s-z} \frac{ds}{R_{+}(s)}
\right], \quad z \in [\beta, \infty),
\end{equation}
and again the quantities appearing within the square brackets on the
right hand side of \eqref{eq:gpPgm} possess at least one Lipschitz
continuous derivative for all $z \in [\beta, \infty)$.

The behavior near $z = \alpha$ is slightly more subtle, but using the identity
\begin{equation}
\label{eq:usefulintegral}
\int_{-\infty}^{\alpha} \frac{ds}{R(s)(s-z)} = \frac{\pi i}{\hat{R}(z)} + \int_{\infty}^{\beta} \frac{ds}{R(s)(s-z)},
\end{equation}
where $\hat{R}(z) = \mathrm{sgn}\left(\mathrm{Im}(z)\right) R(z)$ is the
function which coincides with $R(z)$ in $\mathbb{C}_{+}$, and is
analytic in $\mathbb{C} \setminus ( (-\infty, \alpha] \cup
  [\beta,\infty) )$.  Indeed, using \eqref{eq:usefulintegral},
the identity \eqref{eq:gpmgm} becomes
\begin{equation}
\label{eq:gpmgmalp}
g_{+}(z) -g_{-}(z) = 2 \pi i + R_{+}(z) \left[  2
\int_{\infty}^{\beta} \frac{ds}{R(s) ( s -z )} + \frac{c}{ \pi i} \int_{\alpha}^{\beta} \frac{V(s) - V(z)}{s-z} \frac{ds}{R_{+}(s)}
\right], \quad z \in [\alpha, \beta),
\end{equation}
and once again the quantity within the square brackets on the
right-hand side of \ref{eq:gpmgmalp} possesses at least one Lipschitz
continuous derivative.  Similarly, the identity \eqref{eq:gpPgm} can be
rewritten, in light of \ref{eq:usefulintegral}, as follows:
\begin{equation}
\label{eq:gpPgmalp}
g_{+}(z) +g_{-}(z) -c V(z) - \ell= R(z) \left[  2
\int_{\infty}^{\beta} \frac{ds}{R(s) ( s -z )} + \frac{c }{ \pi i} \int_{\alpha}^{\beta} \frac{V(s) - V(z)}{s-z} \frac{ds}{R_{+}(s)}
\right], \quad z \in (-\infty,\alpha],
\end{equation}
the quantity within the square brackets again possessing one Lipschitz
continuous derivative.

We summarize the results of this Appendix with the following Lemma.
\begin{lemma}
\label{lem:Lem1}
Suppose that the external field $V$ possesses two Lipschitz continuous
derivatives, is strictly convex, and grows faster than $[\log(1
    + x^{2})]^{1 + \epsilon}$ for some $\epsilon>0$.  Then the
density $\psi(x)$ of the equilibrium measure $\mu_*$ is supported on a
single interval, $[\alpha, \beta]$.  On this interval, the function
$\psi$ has the following properties.
\begin{itemize}
\item The function $\psi$ may be expressed via \eqref{eq:psihform}
  with $h$, defined in \eqref{eq:hdef}, being Lipschitz continuous on
  $[\alpha, \beta]$.
\item The function $\psi$ has one derivative, which satisfies the bound 
\eqref{eq:psiprimebd}.
\item In vicinities of the endpoints $\beta$ and $\alpha$, the related
  function $\theta$ (recall $\theta'(x)=-2\pi\psi(x)$) satisfies
  (cf.\@ \eqref{eq:gpmgm} and \eqref{eq:gpmgmalp})
  \begin{equation}
\theta(x)= 2 \pi \int_{x}^{\beta} \psi(s)\, ds = 
\begin{cases}
  -iR_{+}(x) \hat{h}_{\beta}(x), & \quad x \in (\alpha+\epsilon, \beta)\\
  2 \pi  +i R_{+}(x) \hat{h}_{\alpha}(x), &\quad x \in (\alpha, \beta-\epsilon),
\end{cases} 
\end{equation}
for some small $\epsilon>0$, with $\hat{h}_{\beta}$ and
$\hat{h}_{\alpha}$ being positive functions on $(\alpha, \beta)$.  In
addition, $\hat{h}_{\beta}$ possesses one Lipschitz continuous
derivative on $(\alpha, \beta]$, and $\hat{h}_{\alpha}$ possesses one
Lipschitz continuous derivative on $[\alpha, \beta)$.
\end{itemize}
On the complementary set $(-\infty, \alpha) \cup (\beta, \infty)$, the
following properties hold true:
\begin{itemize}
\item The quantity $\phi(x):=cV(x)+\ell-g_+(x)-g_-(x)$ possess two
  derivatives.  The first derivative $\phi'(x)$ may be obtained from
  \eqref{eq:gpmpmfull}, with $h(x)$ being Lipschitz continuous on $(-\infty,
  \alpha] \cup [\beta, \infty)$.
\item The second derivative $\phi''(x)$ satisfies the inequality
\begin{equation}
|x-\alpha|^{1/2}|x-\beta|^{1/2}|\phi''(x)|\le C
\end{equation}
on any compact subset of $\mathbb{R}$.  
\item In vicinities of the endpoints $\alpha$ and $\beta$, the
  function $\phi(x)$ satisfies
  (cf.\@ \eqref{eq:gpPgm} and \eqref{eq:gpPgmalp})
\begin{equation}
\phi(x) = \begin{cases}
-R(x) \hat{h}_{\beta}(x), &\quad x \in [\beta, \infty) \\
R(x) \hat{h}_{\alpha}(x), & \quad x \in (- \infty,\alpha],
\end{cases}
\end{equation}
with $\hat{h}_{\beta}$ and $\hat{h}_{\alpha}$ being extensions, to
$[\beta,\infty)$ and $(-\infty,\alpha]$ respectively, of the
functions of the same name, formerly defined on $(\alpha, \beta]$ and
$[\alpha, \beta)$, respectively.  These extensions possess one
Lipschitz continuous derivative as well.  The function
$\hat{h}_{\beta}$ is strictly negative on all of $(\beta, \infty)$,
and the function $\hat{h}_{\alpha}$ is strictly positive on all of
$(-\infty, \alpha)$.
\end{itemize}
\end{lemma}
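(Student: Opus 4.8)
The plan is to derive Lemma~\ref{lem:Lem1} by assembling the explicit formulae and regularity facts already established in the body of this Appendix, organizing the argument according to the three regions named in the statement: the support interval $[\alpha,\beta]$, neighborhoods of the endpoints $\alpha$ and $\beta$, and the complement $(-\infty,\alpha)\cup(\beta,\infty)$. Nothing new needs to be constructed; the work is to read off each claim from the appropriate identity and to verify the stated regularity and sign properties.

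For the support and the formula for $\psi$ on $[\alpha,\beta]$, the single-interval statement is the consequence of the assumed growth and strict convexity of $V$ together with $c>0$ recorded at the start of the Appendix, and the representation \eqref{eq:psihform} with $h$ given by \eqref{eq:hdef} is precisely what comes out of the scalar boundary-value problem for $g'(z)$ via the substitution $g'(z)=f(z)R(z)$, with $\alpha,\beta$ fixed by \eqref{eq:endpoints}. To see that $h$ is Lipschitz on $[\alpha,\beta]$ I would use the integral representation $(V'(s)-V'(x))/(s-x)=\int_0^1 V''(ts+(1-t)x)\,dt$, which shows the integrand in \eqref{eq:hdef} is a Lipschitz function of $x$ uniformly in $s\in[\alpha,\beta]$ as soon as $V''$ is Lipschitz; integrating against the absolutely integrable measure $ds/R_+(s)$ preserves this. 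Positivity of $h$, hence of $\psi$ on the open interval, was already noted from convexity. For the one-derivative claim and the bound \eqref{eq:psiprimebd}, I would invoke the parallel analysis of $g''(z)$: writing $g''(z)=F(z)/R(z)$ with $F$ as in \eqref{eq:FFormula} produces the explicit formula for $\psi'(x)$ displayed just above \eqref{eq:psiprimebd}, whose two terms are each $\mathcal{O}(1/R_+(x))=\mathcal{O}((x-\alpha)^{-1/2}(\beta-x)^{-1/2})$, which is exactly \eqref{eq:psiprimebd}.

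Near the endpoints, I would read off $\theta(x)=2\pi\int_x^\beta\psi(s)\,ds$ from \eqref{eq:gpmgm} and \eqref{eq:gpmgmalp}: on $(\alpha+\epsilon,\beta)$ take $\hat{h}_\beta(x)$ to be the bracketed expression in \eqref{eq:gpmgm}, so that $\theta(x)=-iR_+(x)\hat{h}_\beta(x)$, and on $(\alpha,\beta-\epsilon)$ take $\hat{h}_\alpha(x)$ to be the bracketed expression in \eqref{eq:gpmgmalp}, so that $\theta(x)=2\pi+iR_+(x)\hat{h}_\alpha(x)$; positivity of $\hat{h}_\beta$ near $\beta$ and of $\hat{h}_\alpha$ near $\alpha$ follows from $\theta'(x)=-2\pi\psi(x)<0$ and the sign of $R_+$, equivalently from positivity of $h$. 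That $\hat{h}_\beta$ has one Lipschitz continuous derivative on $(\alpha,\beta]$ and $\hat{h}_\alpha$ on $[\alpha,\beta)$ is the regularity statement already attached to these brackets: the Cauchy-type integrals $\int_\infty^\beta ds/(R(s)(s-z))$ and $\int_\alpha^\beta (V(s)-V(z))(s-z)^{-1}R_+(s)^{-1}\,ds$ are smooth in $z$ away from the opposite endpoint, and differentiating the second twice leaves an integrand controlled by $V''$, which is Lipschitz. On the complement, the two-derivative claim for $\phi$ and the formula for $\phi'$ come from \eqref{eq:gpmpmfull}, with $h$ Lipschitz on $(-\infty,\alpha]\cup[\beta,\infty)$ by the same integral-representation argument, while the formula for $\phi''$ recorded earlier and the boundedness of $|x-\alpha|^{1/2}|x-\beta|^{1/2}\phi''(x)$ on compacts give the stated inequality; finally the endpoint forms $\phi(x)=-R(x)\hat{h}_\beta(x)$ on $[\beta,\infty)$ and $\phi(x)=R(x)\hat{h}_\alpha(x)$ on $(-\infty,\alpha]$ are read off from \eqref{eq:gpPgm} and \eqref{eq:gpPgmalp}, where $\hat{h}_\beta$, $\hat{h}_\alpha$ are literally the same bracketed expressions as in the $\theta$-formulae, so the functions defined here genuinely extend those defined from $\theta$, and the sign statements $\hat{h}_\beta<0$ on $(\beta,\infty)$, $\hat{h}_\alpha>0$ on $(-\infty,\alpha)$ follow from $\phi>0$ off the support (Condition~\ref{cond:strict}, verified earlier) together with the sign of $R$ there.

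The only genuinely delicate point I expect is the behavior near $z=\alpha$: the raw integral $\int_{-\infty}^\alpha ds/(R(s)(s-z))$ appearing in \eqref{eq:gpmgm} and \eqref{eq:gpPgm} is not smooth at $z=\alpha$, so the argument must route through the identity \eqref{eq:usefulintegral} to trade it for $\pi i/\hat{R}(z)$ plus a convergent integral over $[\beta,\infty)$; this is exactly what produces the clean forms \eqref{eq:gpmgmalp} and \eqref{eq:gpPgmalp} and lets the factor $R(z)$ (or $R_+(z)$) out front absorb the remaining square-root behavior, after which the bracketed quantities have one Lipschitz continuous derivative near both endpoints and the gluing of $\hat{h}_\alpha$ and $\hat{h}_\beta$ across $\alpha$ and $\beta$ is automatic since the brackets in the $\theta$-formulae and the $\phi$-formulae coincide. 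Everything else is bookkeeping: using the normalizations $2g(\beta)-cV(\beta)-\ell=0$ and $g_+(\beta)-g_-(\beta)=0$, and tracking the signs of $R$ and $R_+$ on the various intervals.
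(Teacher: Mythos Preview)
Your proposal is correct and follows the paper's approach exactly: Lemma~\ref{lem:Lem1} is explicitly presented in the paper as a summary of the Appendix (``We summarize the results of this Appendix with the following Lemma''), so the proof consists precisely of reading off each assertion from the identities \eqref{eq:psihform}, \eqref{eq:hdef}, \eqref{eq:psiprimebd}, \eqref{eq:gpmpmfull}, \eqref{eq:gpmgm}, \eqref{eq:gpPgm}, \eqref{eq:gpmgmalp}, and \eqref{eq:gpPgmalp}, with the passage through \eqref{eq:usefulintegral} needed to handle the endpoint $\alpha$ --- just as you outline. One small slip: the functions $\hat{h}_\alpha$ and $\hat{h}_\beta$ are (up to sign) the bracketed expressions in those identities, and you should track that sign carefully when matching the formulae in the lemma statement; otherwise your identification of the brackets as the common link between the $\theta$- and $\phi$-representations, which makes the extension across the endpoints automatic, is exactly the point.
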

Note that the functions $h_\alpha(x)$ and $h_\beta(x)$ used in the main
text are simply related to $\hat{h}_\alpha(x)$ and $\hat{h}_\beta(x)$ as
follows:
\begin{equation}
h_\alpha(x)=\sqrt{\beta-x}\,\hat{h}_\alpha(x)\quad\text{and}\quad
h_\beta(x)=\sqrt{x-\alpha}\,\hat{h}_\beta(x).
\end{equation}

\end{document}